\documentclass[a4paper, reqno, 10pt]{amsart}
\usepackage{amssymb}
\usepackage{extarrows}
\usepackage{bm}
\usepackage[centering]{geometry}
\usepackage{enumitem}
\usepackage{longtable}
\usepackage{multirow}
\usepackage{array}
\usepackage{etex}
\usepackage{pictex}
\usepackage{graphics}

\usepackage[all]{xy}

\newcommand{\tabincell}[2]{\begin{tabular}{@{}#1@{}}#2\end{tabular}}
\numberwithin{equation}{section}
\linespread{1.2}


\usepackage{color}



\numberwithin{equation}{section}
\newtheorem{thm}{Theorem}[section]
\newtheorem{cor}[thm]{Corollary}
\newtheorem{lem}[thm]{Lemma}
\newtheorem{fact}[thm]{Fact}
\newtheorem{prop}[thm]{Proposition}
\theoremstyle{definition}

\newtheorem{rem}[thm]{Remark}

\def\Fib{\operatorname{Fib}}
\def\CoFib{\operatorname{CoFib}}
\def\Weq{\operatorname{Weq}}
\def\Hom{{\rm Hom}}
\def\Ext{{\rm Ext}}
\def\Ker{{\rm Ker}}
\def\Coker{{\rm Coker}}

\def\Fib{\operatorname{Fib}}
\def\CoFib{\operatorname{CoFib}}
\def\Weq{\operatorname{Weq}}

\begin{document}

\title[Model structures by modules of finite Gorenstein dimension]{Chains of model structures arising from \\ modules of finite Gorenstein dimension}

\author[Nan Gao, Xue-Song Lu, Pu Zhang]{Nan Gao$^*$, Xue-Song Lu, Pu Zhang$^*$}

\thanks{$^*$ Corresponding author}

\thanks{nangao@shu.edu.cn \ \ \ \ leocedar@sjtu.edu.cn \ \ \ \ pzhang$\symbol{64}$sjtu.edu.cn}

\thanks{Supported by National Natural Science Foundation of China, Grant No. 12131015, 12271333, and Natural Science Foundation of Shanghai under Grant No. 23ZR1435100.}


\subjclass[2010]{}

\begin{abstract}\ For any integer $n\ge 0$ and any ring $R$, \ $(\mathcal {PGF}_n, \ \mathcal P_n^\perp \cap \mathcal {PGF}^{\perp})$ proves to be a complete hereditary cotorsion pair in $R$-Mod, where
$\mathcal {PGF}$ is the class of PGF modules, introduced  by J. \v{S}aroch and J. \v{S}\'{t}ov\'{i}\v{c}ek,
and \ $\mathcal {PGF}_n$ is the class of $R$-modules of PGF dimension $\le n$.
For any Artin algebra $R$, \ $(\mathcal {GP}_n, \ \mathcal P_n^\perp \cap \mathcal {GP}^{\perp})$ proves to be a complete and hereditary cotorsion pair in $R$-Mod, where
$\mathcal {GP}_n$ is the class of modules of Gorenstein projective dimension $\le n$.
These cotorsion pairs induce two chains of hereditary Hovey triples \ $(\mathcal {PGF}_n, \ \mathcal P_n^\perp, \ \mathcal {PGF}^{\perp})$
and \ $(\mathcal {GP}_n, \ \mathcal P_n^\perp, \ \mathcal {GP}^{\perp})$,
and the corresponding homotopy categories in the same chain are the same.
It is observed that some complete cotorsion pairs in $R$-Mod can induce complete cotorsion pairs in some special extension closed subcategories of $R$-Mod.
Then corresponding results in exact categories $\mathcal {PGF}_n$, \ $\mathcal {GP}_n$, \ $\mathcal {GF}_n$, \
$\mathcal {PGF}^{<\infty}$, \ $\mathcal {GP}^{<\infty}$ and $\mathcal {GF}^{<\infty}$, are also obtained. As a byproduct,
$\mathcal{PGF} = \mathcal {GP}$ for a ring $R$ if and only if $\mathcal{PGF}^\perp\cap\mathcal{GP}_n=\mathcal P_n$ for some $n$.

\vskip5pt

Keywords:   projectively coresolved Gorenstein flat (PGF) module; module of finite PGF (Gorenstein projective,  Gorenstein flat) dimension;
(complete, hereditary) cotorsion pair; abelian (exact) model structure; weakly idempotent complete exact category; finitistic dimension

\vskip5pt

2020 Mathematics Subject Classification.   Primary 16E30,  18N40; Secondary 16E10, 16E65, 16G50

\end{abstract}

\maketitle

\centerline{\bf Introduction}

\vskip 10pt

Gorenstein projective modules, introduced by
M. Auslander and M. Bridger \cite{ABr}, and E. Enochs and O. M. G. Jenda
\cite{EJ1}, have got deep interests and wide applications in mathematics.
A recent important progress is the introduction of projectively coresolved Gorenstein flat
(PGF, for short) modules over an arbitrary ring $R$, by J. \v{S}aroch and J. \v{S}\'{t}ov\'{i}\v{c}ek \cite{SS2}.

\vskip 10pt

This kind of PGF modules enjoys pleasant properties. It  is not clear whether a Gorenstein projective module is Gorenstein flat;
but PGF modules provide a common refinement of  Gorenstein projective modules and Gorenstein flat modules:
by definition they are  Gorenstein flat, and they also prove to be  Gorenstein projective (\cite[Theorem 4.4]{SS2}).
The full subcategory $\mathcal {PGF}$ of $R$-Mod consisting of the PGF modules  is a Frobenius category with
$\mathcal P$, the full subcategory of projective $R$-modules, as the class of projective-injective objects, and the stable triangulated category
$\mathcal {PGF}/\mathcal P$ is the homotopy category of infinite different abelian or exact model structures, in the sense of
D. Quillen \cite{Q1}, M. Hovey \cite{Hov}, and J. Gillespie \cite{G}.

\vskip5pt

What interesting and important are,  all the three classes $\mathcal {PGF}$, $\mathcal {GP}$, and $\mathcal {GF}$, are the left parts
of cotorsion pairs in $R$-Mod, where $\mathcal {GP}$ (respectively, $\mathcal {GF}$) is the full subcategory  of $R$-Mod consisting of
the Gorenstein projective (respectively, Gorenstein flat) modules.
A. Beligiannis and I. Reiten \cite[X, Theorem 2.4]{BR} prove that
$(\mathcal {GP}, \ \mathcal {GP}^{\perp})$ is a complete and hereditary
cotorsion pair in $R$-Mod, if $R$ is an Artin algebra.
\v{S}aroch and \v{S}\'{t}ov\'{i}\v{c}ek \cite[Theorem 4.9, Corollary 4.12]{SS2} prove that
$(\mathcal {PGF}, \ \mathcal {PGF}^\perp)$ and $(\mathcal {GF}, \ \mathcal {EC}\cap\mathcal {PGF}^\perp)$ are complete and hereditary cotorsion pairs in $R$-Mod,
for any ring $R$.
By a result of A. Iacob \cite[Proposition 9]{I} (see also \cite[Proposition 4.11]{M}), if $R$ is a Gorenstein ring (i.e., $R$ is  a left and right noetherian ring and
the injective dimensions of $_RR$ and $R_R$ are finite), then $\mathcal {PGF} = \mathcal {GP}$.
Thus, in this case, the cotorsion pair $(\mathcal {PGF}, \ \mathcal {PGF}^\perp)$
reads as $(\mathcal {GP}, \ \mathcal {P}^{<\infty})$, which has been given by Hovey \cite[Theorem 8.3]{Hov}.
By  the Hovey correspondence, all these three kinds of
cotorsion pairs induce abelian model structures in $R$-Mod.

\vskip5pt

Various homological dimensions provide more possibilities to obtain cotorsion pairs. For examples, for each non-negative integer $n$,
both $$(\mathcal P_n, \ \mathcal P_n^\perp) \ \ \ \ \mbox{ and} \ \ \ \ (\mathcal{F}_n, \ \mathcal{F}_n^{\perp})$$ are  complete and hereditary cotorsion pairs in $R$-Mod
(see \cite[Theorem 7.4.6]{EJ}, and \cite[Theorem 3.4(2)]{MD}, respectively),
where $\mathcal P_n$ (respectively, $\mathcal F_n)$ is the class of $R$-modules of projective (respectively, flat) dimension $\le n$.

\vskip5pt

The first aim of this paper is to show that these phenomena are quite common. For each non-negative integer $n$,
we prove that
$$(\mathcal {PGF}_n, \ \mathcal P_n^\perp\cap\mathcal {PGF}^\perp)$$
is a complete and hereditary cotorsion pair in $R$-Mod for any ring $R$, where $\mathcal {PGF}_n$ is the class of $R$-modules of PGF dimension $\le n$ (see Corollary \ref{mainthm2}); and that
$$(\mathcal {GP}_n, \ \mathcal P_n^\perp\cap \mathcal {P}^{<\infty})$$ is a complete and hereditary cotorsion pair in $R$-Mod, if $R$ is an Artin algebra, where $\mathcal {GP}_n$ is the class of $R$-modules of Gorenstein projective dimension $\le n$ (see Theorem \ref{mainthm4}).
R. El Maaouy \cite[Theorem A]{M} has been proved that there is a complete and hereditary cotorsion pair in $R$-Mod
$$(\mathcal{GF}_n, \  \mathcal{F}_n^{\perp} \cap \mathcal {PGF}^{\perp})$$
for any ring $R$, where $\mathcal {GF}_n$ is the class of $R$-modules of Gorenstein flat dimension $\le n$.
These three chains of cotorsion pairs induce three chains of hereditary abelian model structures on $R$-Mod:
$$(\mathcal {PGF}_n, \ \mathcal P_n^\perp, \ \mathcal {PGF}^\perp), \ \ \ \ \ (\mathcal {GP}_n, \ \mathcal P_n^\perp, \ \mathcal {P}^{<\infty}), \ \ \ \ \
(\mathcal {GF}_n, \ \mathcal{F}_n^\perp, \ \mathcal {PGF}^\perp).$$
 which are not trivial (i.e., not every module is a trivial object in the
model structure), and not projective (i.e., not every module is a fibrant object) in general.
(We remind that, in this paper,  a Hovey triple is written in the order \ $(\mathcal C, \mathcal F, \mathcal W)$,  rather than \ $(\mathcal C, \mathcal W, \mathcal F)$, where $\mathcal W$ is the class of trivial objects.)
Since all the hereditary abelian model structures in each chain have the same class of
trivial objects, the homotopy categories in the same chain are triangle equivalent to each other. Thus
the homotopy categories are  the stable categories
$\mathcal {PGF}/\mathcal P$ and $\mathcal {GP}/\mathcal P$, respectively.

\vskip5pt

One may further ask whether $\mathcal {PGF}^{<\infty}$, $\mathcal {GP}^{<\infty}$, and $\mathcal {GF}^{<\infty}$, can be the left parts
of complete and hereditary cotorsion pairs in $R$-Mod,
where $\mathcal {PGF}^{<\infty}$ (respectively, $\mathcal {GP}^{<\infty}, \ \mathcal {GF}^{<\infty}$) is the class  of $R$-modules of
finite PGF (respectively, the Gorenstein projective, Gorenstein flat) dimension.
This leads to an investigation on various finitistic dimensions. See Proposition \ref{finitisticdims} and Corollary \ref{finitedimensionversion}.

\vskip5pt

An overall landscape of this question can be viewed in the following table.

\vskip10pt

\centerline{\bf Table 1: \ Complete and hereditary cotorsion pairs}
\centerline{\bf \hskip50pt and the induced Hovey triples in $R$-Mod}

$${\tiny\begin{tabular}{|c|c|c|c|} \hline
\phantom{\LARGE 0} & {\tabincell{c}{$n=0$\\[3pt]}}& {\tabincell{c}{$n$\\[3pt]}}& {\tabincell{c}{$<\infty$\\[3pt]}}
\\[8pt]\hline
\tabincell{c}{$\mathcal P$}
& \tabincell{c}{\\ $(\mathcal P, \ R\mbox{-}{\rm Mod})$}
& \tabincell{c}{\\ $(\mathcal P_n, \ \mathcal P_n^\perp)$ \\ \\ Enochs, Jenda \cite[Theorem 7.4.6]{EJ}}
& \tabincell{c}{\\ If ${\rm Fpd} <\infty$, then \\ \\
\ $(\mathcal{P}^{<\infty}, \  (\mathcal{P}^{<\infty})^{\perp})$}
\\[15pt] \hline
\tabincell{c}{$\mathcal {FL}$ \\ $= \mathcal F_0$}
& \tabincell{c}{\\ $(\mathcal {FL}, \ \mathcal{EC})$\\ \\ Enochs, Jenda  \cite[Proposition 7.4.3]{EJ}}
& \tabincell{c}{\\ $(\mathcal F_n, \ \mathcal F_n^\perp)$\\ \\ Mao, Ding  \cite[Theorem 3.4(2)]{MD}}
& \tabincell{c}{\\ If ${\rm Fpd} <\infty$, then \\ \\
\ $(\mathcal{F}^{<\infty}, \  (\mathcal{F}^{<\infty})^{\perp})$}
\\[15pt] \hline
\tabincell{c}{$\mathcal {PGF}$}
& \tabincell{c}{\\ $(\mathcal {PGF}, \ \mathcal {PGF}^\perp)$ \\ \\ $(\mathcal {PGF}, \ R\mbox{-}{\rm Mod}, \ \mathcal {PGF}^\perp)$ \\ \\ \v{S}aroch, \v{S}\'{t}ov\'{i}\v{c}ek  \cite[Theorem 4.9]{SS2}}
& \tabincell{c}{\\ $(\mathcal {PGF}_n, \ \mathcal P_n^\perp\cap\mathcal {PGF}^\perp)$ \\ \\ $(\mathcal {PGF}_n, \ \mathcal P_n^\perp, \ \mathcal {PGF}^\perp)$ \\ \\ Corollary \ref{mainthm2}}
&\tabincell{c}{\\ If ${\rm Fpd} <\infty$, then \\ \\
\ $(\mathcal{PGF}^{<\infty}, \  (\mathcal{P}^{<\infty})^{\perp} \cap \mathcal {PGF}^{\perp})$
\\ \\ $(\mathcal{PGF}^{<\infty}, \  (\mathcal{P}^{<\infty})^{\perp}, \ \mathcal {PGF}^{\perp})$\\ \\ Corollary \ref{finitedimensionversion}}
\\[15pt] \hline
\tabincell{c}{$\mathcal {GP}$}
& \tabincell{c}{\\ For Artin algebras \\ \\ $(\mathcal {GP}, \ \mathcal {GP}^\perp)$ \\ \\ $(\mathcal {GP}, \ R\mbox{-}{\rm Mod}, \ \mathcal {GP}^\perp)$ \\ \\ Beligiannis, Reiten \\ \cite[X, Theorem 2.4(iv)]{BR}}
& \tabincell{c}{\\ For Artin algebras \\ \\ $(\mathcal {GP}_n, \ \mathcal P_n^\perp\cap\mathcal {GP}^{\perp})$
\\ \\ $(\mathcal {GP}_n, \ \mathcal P_n^\perp, \ \mathcal {GP}^{\perp})$
\\ \\ Theorem \ref{mainthm4}}
&\tabincell{c}{\\ For Artin algebras \\ \\ If ${\rm Fpd} <\infty$, then \\ \\ $(\mathcal {GP}^{<\infty}, \ (\mathcal P^{<\infty})^\perp\cap\mathcal {GP}^{\perp})$
\\ \\ $(\mathcal {GP}^{<\infty}, \ (\mathcal P^{<\infty})^\perp, \ \mathcal {GP}^{\perp})$ \\ \\ Corollary \ref{finitedimensionversion}}
\\[15pt] \hline
\tabincell{c}{$\mathcal{GF}$}
& \tabincell{c}{\\ $(\mathcal {GF}, \ \mathcal{EC}\cap\mathcal {PGF}^\perp)$
\\ \\ $(\mathcal {GF}, \ \mathcal{EC}, \ \mathcal {PGF}^\perp)$
\\ \\ \v{S}aroch, \v{S}\'{t}ov\'{i}\v{c}ek \\ \cite[Theorem 4.11, Corollary 4.12]{SS2}}
&\tabincell{c}{\\ $(\mathcal {GF}_n, \ \mathcal{F}_n^\perp\cap\mathcal {PGF}^\perp)$ \\ \\ $(\mathcal {GF}_n, \ \mathcal{F}_n^\perp, \ \mathcal {PGF}^\perp)$
\\ \\ Maaouy  \cite[Theorem A]{M}}
& \tabincell{c}{\\ If ${\rm Fpd} <\infty$, then \\ \\ $(\mathcal {GF}^{<\infty}, \ (\mathcal{F}^{<\infty})^\perp\cap\mathcal {PGF}^\perp)$
\\ \\ $(\mathcal {GF}^{<\infty}, \ (\mathcal{F}^{<\infty})^\perp, \ \mathcal {PGF}^\perp)$
\\ \\ Corollary \ref{finitedimensionversion}}
\\[15pt] \hline
\end{tabular}}$$

\vskip10pt

In order to study homology in non abelian categories, Quillen \cite{Q3} introduces exact categories. Any full subcategory of  an abelian category
which are closed under extensions and direct summands is a weakly idempotent complete exact category. By Gillespie \cite{G},
there is also the Hovey correspondence between the Hovey triples and the exact model structures, on any weakly idempotent complete exact category (see also \cite{S}).

\vskip5pt

The second aim of this paper is to look for complete and hereditary cotorsion pairs and exact model structures
in weakly idempotent complete exact categories
$\mathcal{PGF}_{n}$,  $\mathcal{GP}_{n}$,  $\mathcal{GF}_{n}$, and
$\mathcal{PGF}^{<\infty}$, $\mathcal{GP}^{<\infty}$ and $\mathcal{GF}^{<\infty}$, for any non-negative integer $n$.
This mainly comes from the following observation (see Theorem \ref{inducedctp1}):

\vskip5pt

If $\mathcal B$ is a full subcategory of abelian category $\mathcal A$, which is closed under extensions and the kernels of epimorphisms,
then any complete cotorsion pair  $(\mathcal X, \ \mathcal Y)$ in $\mathcal A$ with $\mathcal X\subseteq \mathcal B$ induces
complete cotorsion pair $(\mathcal X, \ \mathcal Y\cap\mathcal B)$ in exact category $\mathcal B.$ Moreover, if
$(\mathcal X, \ \mathcal Y)$ is hereditary in $\mathcal A$, then so is $(\mathcal X, \ \mathcal Y\cap\mathcal B)$ in $\mathcal B$.

\vskip5pt

This observation provides new complete cotorsion pairs and exact model structures in exact categories.
For example, for each non-negative integer $m$, one has two chains of complete and hereditary cotorsion pairs $(\mathcal X, \ \mathcal Y)$ in $R$-Mod with
$\mathcal X\subseteq \mathcal{PGF}^{<\infty}:$
$$(\mathcal P_m, \ \mathcal P_m^\perp), \ \ \ \ (\mathcal {PGF}_m, \ \mathcal P_m^\perp\cap \mathcal {PGF}^\perp).$$
Applying Theorem \ref{inducedctp1} one gets two chains of
complete and hereditary cotorsion pairs in $\mathcal{PGF}^{<\infty}$:
$$(\mathcal{P}_m, \ \mathcal P_m^\perp\cap \mathcal{PGF}^{<\infty}), \ \ \ \ \ (\mathcal{PGF}_m, \ \mathcal P_m^\perp\cap \mathcal{P}^{<\infty});$$
and  one chain of hereditary Hovey triples $(\mathcal{PGF}_m, \ \mathcal P_m^\perp\cap\mathcal{PGF}^{<\infty}, \ \mathcal P^{<\infty})$ in $\mathcal{PGF}^{<\infty}$.
See Corollary \ref{mainthm5.5}. When $m = 0$, this is \cite[Theorem 4.1] {DE} by G. Dalezios and I. Emmanouil.

\vskip5pt

Let us see the case in $\mathcal{GP}_n$. For any non-negative integer $n$,
one has  two chains of complete and hereditary cotorsion pairs $(\mathcal X, \ \mathcal Y)$ in $R$-Mod with
$\mathcal X\subseteq \mathcal{GP}_n$:
$$(\mathcal P_m, \ \mathcal P_m^\perp),  \ \ \ \ (\mathcal {PGF}_m, \ \mathcal P_m^\perp\cap \mathcal {PGF}^\perp),
$$
where $0\le m\le n$. Applying Theorem \ref{inducedctp1} one gets two chains of complete and hereditary cotorsion pairs in $\mathcal{GP}_n:$
$$(\mathcal{P}_m, \ \mathcal P_m^\perp\cap \mathcal{GP}_n), \ \ \ \
(\mathcal{PGF}_m, \ \mathcal P_m^\perp\cap \mathcal {PGF}^\perp\cap \mathcal{GP}_n);$$
and  one chain of hereditary Hovey triples $(\mathcal{PGF}_m, \ \mathcal P_m^\perp\cap\mathcal{GP}_n, \ \mathcal {PGF}^\perp\cap \mathcal{GP}_n)$  in $\mathcal{GP}_n$. See Corollary \ref{mainthm6}. In this way one gets double chains of complete and hereditary cotorsion pairs and hereditary Hovey triples, parameterized by $m$ and $n$.
All these exact model structures are not trivial, and not projective in general, with the same homotopy category $\mathcal {PGF}/\mathcal P$.

\vskip5pt

More interesting,  $(\mathcal{GP}_m, \ \mathcal P_m^\perp\cap\mathcal{P}_n)$ is a complete and hereditary cotorsion pair in $\mathcal{G P}_n$;
and $(\mathcal{G P}_m, \ \mathcal P_m^\perp\cap\mathcal{G P}_n, \ \mathcal P_{n})$ is a hereditary Hovey triple in $\mathcal{G P}_n$, for any integers $m$ and $n$ with $0\le m\leq n$.
See Theorem \ref{gpctp}. What  surprising is, Theorem \ref{gpctp} is for an arbitrary ring $R$, not only for Artin algebras.  Thus, Theorem \ref{gpctp}
is not an application of Theorem \ref{inducedctp1} and Theorem \ref{mainthm4} (which is for Artin algebras), and its proof is different from the one of Corollary \ref{mainthm6}.

\vskip5pt

As a byproduct, for a ring $R$, $\mathcal{PGF} = \mathcal {GP}$  if and only if $\mathcal{PGF}^\perp\cap\mathcal{GP}_n=\mathcal P_n$ for some non-negative integer $n$;
if and only if $\mathcal{PGF}^\perp\cap\mathcal{GP}_n=\mathcal P_n$ for any non-negative integer $n$.

\vskip15pt

\vskip5pt

An overall landscape can be viewed in the following table.

\vskip10pt

\centerline{\bf Table 2: \ Complete and hereditary cotorsion pairs and }
\centerline{\bf \hskip46pt the induced Hovey triples in exact categories}
$${\tiny\begin{tabular}{|c|c|c|c|} \hline
\phantom{\LARGE 0} & {\tabincell{c}{$n=0$\\[3pt]}}& {\tabincell{c}{$n$\\ $0\le m\le n$\\[3pt]}}& {\tabincell{c}{$<\infty$\\ $m$ \\[3pt]}}
\\[8pt]\hline
\tabincell{c}{\\ $\mathcal P$}
& \tabincell{c}{In $\mathcal P$ \\ $(\mathcal P, \ \mathcal P)$}
& \tabincell{c}{In $\mathcal P_n$ \\ $(\mathcal P_m, \ \mathcal P^\perp_m\cap\mathcal P_n)$}
& \tabincell{c}{In $\mathcal P^{<\infty}$ \\ $(\mathcal P_m, \ \mathcal P^\perp_m\cap\mathcal P^{<\infty})$}
\\[15pt] \hline
\tabincell{c}{ $\mathcal {FL} = \mathcal F_0$}
& \tabincell{c}{In $\mathcal {FL}$ \\  $(\mathcal {FL}, \ \mathcal{EC}\cap  \mathcal {FL})$}
& \tabincell{c}{ In $\mathcal F_n$ \\ $(\mathcal {P}_m, \ \mathcal{P}_m^\perp\cap \mathcal F_n)$ \\  \\$(\mathcal {F}_m, \ \mathcal{F}_m^\perp\cap \mathcal F_n)$}
& \tabincell{c}{ In $\mathcal F^{<\infty}$ \\$(\mathcal {P}_m, \ \mathcal{P}_m^\perp\cap \mathcal F^{<\infty})$\\ \\ $(\mathcal {F}_m, \ \mathcal{F}_m^\perp\cap \mathcal F^{<\infty})$}
\\[15pt] \hline
\tabincell{c}{$\mathcal {PGF}$}
& \tabincell{c}{In $\mathcal {PGF}$ \\ \\ $(\mathcal {PGF}, \ \mathcal {P})$ \\ \\ $(\mathcal {PGF}, \ \mathcal {PGF}, \ \mathcal {P})$ \\   \\  a special case of Corollary \ref{mainthm5}}
& \tabincell{c}{In $\mathcal {PGF}_n$ \\ \\ $(\mathcal {P}_m, \ \mathcal P_m^\perp\cap\mathcal {PGF}_n)$ \\ \\ $(\mathcal {PGF}_m, \ \mathcal P_m^\perp\cap\mathcal {P}_n)$
\\ \\  $(\mathcal {PGF}_m, \ \mathcal P_m^\perp\cap\mathcal {PGF}_n, \ \mathcal {P}_n)$
\\ \\ Corollary \ref{mainthm5}}
&\tabincell{c}{In $\mathcal {PGF}^{<\infty}$  \\ \\ $(\mathcal {P}_m, \ \mathcal P_m^\perp\cap\mathcal {PGF}^{<\infty})$ \\ \\ $ (\mathcal {PGF}_m, \ \mathcal P_m^\perp\cap \mathcal {P}^{<\infty})$
\\ \\ $ (\mathcal {PGF}_m, \ \mathcal P_m^\perp\cap\mathcal {PGF}^{<\infty}, \ \mathcal {P}^{<\infty})$
\\ \\ Corollary \ref{mainthm5.5}}
\\[15pt] \hline
\tabincell{c}{$\mathcal {GP}$}
& \tabincell{c}{In $\mathcal {GP}$ \\ \\ $(\mathcal {PGF}, \ \mathcal {PGF}^\perp\cap \mathcal {GP})$
\\ \\$(\mathcal {GP}, \ \mathcal {P})$
\\ \\ $(\mathcal {PGF}, \ \mathcal {GP}, \ \mathcal {PGF}^\perp\cap \mathcal {GP})$\\ \\ $(\mathcal {GP}, \ \mathcal {GP}, \ \mathcal {P})$ \\  \\ a special case of \\ Corollary \ref{mainthm6} and Theorem \ref{gpctp}}
& \tabincell{c}{In $\mathcal {GP}_n$ \\ \\ $(\mathcal {P}_m, \ \mathcal P_m^\perp\cap\mathcal {GP}_n)$
\\ \\  $(\mathcal {PGF}_m, \ \mathcal P_m^\perp\cap\mathcal {PGF}^\perp\cap \mathcal {GP}_n)$
\\ \\ $(\mathcal {GP}_m, \ \mathcal P_m^\perp\cap\mathcal {P}_n)$
\\ \\  $(\mathcal {PGF}_m, \ \mathcal P_m^\perp\cap \mathcal {GP}_n, \ \mathcal {PGF}^\perp\cap \mathcal {GP}_n)$
\\ \\ $(\mathcal {GP}_m, \ \mathcal P_m^\perp\cap\mathcal {GP}_n, \ \mathcal {P}_n)$
\\ \\ Corollary \ref{mainthm6} and Theorem \ref{gpctp}}
&\tabincell{c}{In $\mathcal {GP}^{<\infty}$ \\ \\  $(\mathcal {P}_m, \ \mathcal P_m^\perp\cap\mathcal {GP}^{<\infty})$
\\ \\$(\mathcal {PGF}_m, \ \mathcal P_m^\perp\cap\mathcal{PGF}^\perp\cap\mathcal {GP}^{<\infty})$
\\ \\ $(\mathcal {GP}_m, \ \mathcal P_m^\perp\cap\mathcal {P}^{<\infty})$
\\ \\ $(\mathcal{PGF}_m, \ \mathcal P_m^\perp\cap\mathcal{G P}^{<\infty}, \ \mathcal{PGF}^\perp\cap \mathcal{GP}^{<\infty})$
\\ \\ $(\mathcal {GP}_m, \ \mathcal P_m^\perp\cap\mathcal {GP}^{<\infty}, \ \mathcal {P}^{<\infty})$
\\ \\ Corollary \ref{mainthm7}}
\\[15pt] \hline
\tabincell{c}{$\mathcal{GF}$}
& \tabincell{c}{In $\mathcal {GF}$ \\ $(\mathcal {FL}, \ \mathcal {EC}\cap\mathcal{GF})$ \\ $(\mathcal {GF}, \ \mathcal{FL}\cap\mathcal {EC})$
\\ $(\mathcal {GF}, \ \mathcal {EC}\cap\mathcal {GF}, \ \mathcal{FL})$
\\ a special case of Corollary \ref{mainthm8}
\\ \\ $(\mathcal {PGF}, \ \mathcal {FL})$
\\ $(\mathcal {PGF}, \ \mathcal {GF}, \ \mathcal {FL})$
\\ Dalezios, Emmanouil \\ \cite[Theorem 4.3]{DE}}
&\tabincell{c}{In $\mathcal {GF}_n$ \\ \\ $(\mathcal {P}_m, \ \mathcal P_m^\perp\cap\mathcal {GF}_n)$
\\ \\ $(\mathcal {F}_m, \ \mathcal F_m^\perp\cap\mathcal {GF}_n)$
\\ \\ $(\mathcal {PGF}_m, \ \mathcal{F}_n\cap \mathcal P_m^\perp)$
\\ \\ $(\mathcal {GF}_m, \ \mathcal{F}_n\cap \mathcal F_m^\perp)$
\\ \\ $(\mathcal {PGF}_m, \ \mathcal P_m^\perp\cap\mathcal {GF}_n, \ \mathcal{F}_n)$
\\ \\ $(\mathcal {GF}_m, \ \mathcal F_m^\perp\cap\mathcal {GF}_n, \ \mathcal{F}_n)$
\\ \\ Corollary \ref{mainthm8}}
& \tabincell{c}{In $\mathcal {GF}^{<\infty}$ \\ \\ $(\mathcal {P}_m, \ \mathcal P_m^\perp\cap\mathcal {GF}^{<\infty})$
\\ \\ $(\mathcal {F}_m, \ \mathcal F_m^\perp\cap\mathcal {GF}^{<\infty})$
\\ \\ $(\mathcal {PGF}_m, \ \mathcal{F}^{<\infty}\cap \mathcal P_m^\perp)$
\\ \\ $(\mathcal {GF}_m, \ \mathcal{F}^{<\infty}\cap \mathcal F_m^\perp)$
\\ \\ $(\mathcal {PGF}_m, \ \mathcal P_m^\perp\cap\mathcal {GF}^{<\infty}, \ \mathcal{F}^{<\infty})$
\\ \\ $(\mathcal {GF}_m, \ \mathcal F_m^\perp\cap\mathcal {GF}^{<\infty}, \ \mathcal{F}^{<\infty})$
\\ \\ Corollary \ref{mainthm9}}
\\[15pt] \hline
\end{tabular}}$$

\vskip10pt

The paper is organized as follows.

\vskip5pt

1. \ Preliminaries

2. \ Abelian model structures arising from $\mathcal {PGF}_n$

3. \ Abelian model structures arising from $\mathcal {GP}_n$

4. \ Several finitistic dimensions

\hskip15pt 4.1 \ Equalities on finitistic dimensions

\hskip15pt 4.2 \ Abelian model structures arising from $\mathcal {PGF}^{<\infty}$, $\mathcal {GP}^{<\infty}$, and $\mathcal {GF}^{<\infty}$

5. \ Induced complete cotorsion pairs in exact categories with applications

\hskip15pt 5.1 \ Induced complete cotorsion pairs in exact categories

\hskip15pt 5.2 \ Application 1: Exact model structures on $\mathcal{PGF}_n$

\hskip15pt 5.3 \ Application 2: Exact model structures on $\mathcal{PGF}^{<\infty}$

6. \ Exact model structures on $\mathcal{GP}_n$ and $\mathcal{GP}^{<\infty}$

\hskip15pt 6.1 \  Application 3: Exact model structures on $\mathcal{GP}_n$

\hskip15pt 6.2 \  A new chain of cotorsion pairs and exact model structures on $\mathcal{G P}_n$

\hskip15pt 6.3 \  A sufficient and necessary condition for $\mathcal{PGF} = \mathcal {GP}$

\hskip15pt 6.4 \ Exact model structures on $\mathcal{GP}^{<\infty}$

7. \ Exact model structures on $\mathcal{GF}_n$ and  $\mathcal{GF}^{<\infty}$

\hskip15pt 7.1 \ Application 4: Exact model structures on $\mathcal{GF}_n$

\hskip15pt 7.2 \ Application 5: Exact model structures on $\mathcal{GF}^{<\infty}$

\section{\bf Preliminaries}

\subsection{Modules of finite Gorenstein dimension} \ Throughout $R$ is a ring with identity, $R$-Mod the category of left $R$-modules,
and $\mathcal{P}(R)$, or simply $\mathcal{P}$, the class of projective $R$-modules. Denote by $\mathcal P^{<\infty}$ (respectively, $\mathcal F^{<\infty}$)
the class of  modules of finite projective (respectively, flat) dimension. For a non-negative integer $n$, let $\mathcal{P}_{n}$ (respectively, $\mathcal{F}_{n}$) be the class of modules $M$
of projective dimension ${\rm pd}M\le n$ (respectively, flat dimension ${\rm fd}M\le n$).

\vskip 5pt

An $R$-module $M$ is {\it Gorenstein projective} (\cite{ABr}; \cite{EJ}) if it is
a  syzygy of an exact sequence of projective modules which remains exact after applying $\Hom_R(-, P)$ for any projective $R$-module $P$.
Let $\mathcal{GP}$ be the subcategory of Gorenstein projective $R$-modules.
{\it The Gorenstein projective dimension} of a module $M$ is denoted by
${\rm Gpd}M$. Let $\mathcal {GP}^{<\infty}$ be the class of modules of finite Gorenstein projective dimension.
For a non-negative integer $n$, let $\mathcal{GP}_{n}$ be the subcategory of $R$-modules $M$ with ${\rm Gpd}M\le n$.

\vskip 5pt

An $R$-module $M$ is {\it Gorenstein flat} (\cite{EJ}) if it is
a  syzygy of an exact sequence of flat modules which remains exact after tensoring with any right injective $R$-module.
It is open whether a Gorenstein projective module is Gorenstein flat.
Denote by $\mathcal{GF}$ the class of Gorenstein flat $R$-modules.
{\it The Gorenstein flat dimension} of module $M$ is denoted by
${\rm Gfd}M$ (\cite {Ho}). Let $\mathcal {GF}^{<\infty}$  be the class of modules of finite Gorenstein flat dimension.
For a non-negative integer $n$, let $\mathcal{GF}_{n}$ be the
class of modules $M$ of ${\rm Gfd}M\le n$.

\vskip 5pt

Following \cite{SS2}, an $R$-module $M$ is {\it projectively coresolved Gorenstein flat} (PGF for short),  if
it is a syzygy of an exact sequence of projective modules which remains exact after tensoring with any right injective $R$-module.
Denote by   $\mathcal{PGF}$  the subcategory of PGF modules. By
definition $\mathcal{PGF} \subseteq \mathcal{GF}.$ It is proved in \cite [Theorem 4.4]{SS2} that $\mathcal{PGF} \subseteq \mathcal{GP}.$
However, it is not clear whether a Gorenstein projective module is PGF. In fact, $\mathcal{PGF} = \mathcal{GP}$ if and only if $\mathcal{GP} \subseteq \mathcal{GF}$. Denoted by
${\rm PGFd}M$ {\it the {\rm PGF} dimension} of module $M$  (see \cite {DE}).
Denote by $\mathcal {PGF}^{<\infty}$ the class of modules of finite PGF dimension.
For a non-negative integer $n$, let $\mathcal{PGF}_{n}$ be the subcategory of $R$-modules $M$ with ${\rm PGFd}M\le n$.
Thus one has
$${\rm Gpd}M\le {\rm PGFd}M\le {\rm pd}M; \ \ \ \ \  {\rm fd}M\le {\rm pd}M; \ \ \ \ \ {\rm Gfd}M\le {\rm min}\{{\rm PGFd}M, \ {\rm fd}M\}$$
and
\[\xymatrix@C=30pt@R=0.4cm{& \mathcal F_n \ar@{^(->}[rd] &&&& \mathcal F^{<\infty}\ar@{^(->}[rd]
\\ \mathcal P_n \ar@{^(->}[ru] \ar@{^(->}[rd] && \mathcal {GF}_n && \mathcal P^{<\infty}\ar@{^(->}[ru] \ar@{^(->}[rd]&& \mathcal {GF}^{<\infty}
\\ & \mathcal {PGF}_n \ar@{^(->}[r]\ar@{^(->}[ru] & \mathcal {GP}_n &&& \mathcal {PGF}^{<\infty} \ar@{^(->}[r]\ar@{^(->}[ru] & \mathcal {GP}^{<\infty}.}
\]

\vskip 5pt

\subsection{Cotorsion pairs in exact categories} \ For {\it exact category} we refer to \cite{Q3} and \cite{K}. An exact category is {\it weakly idempotent complete}, if any splitting monomorphism has a cokernel; or equivalently,
any splitting epimorphism has a kernel. For more equivalent descriptions see \cite[7.2, 7.6]{Bu}.
Any full subcategory of an abelian category which is closed under extensions and direct summands is a weakly idempotent complete exact category, with the natural exact structure,
but it is not an abelian category, in general. The weakly idempotent complete exact categories considered in this paper are
$\mathcal{PGF}_{n}$, $\mathcal{GP}_{n}$ and $\mathcal{GF}_{n}$, for each non-negative integer $n$, and
$\mathcal{PGF}^{<\infty}$, $\mathcal{GP}^{<\infty}$ and $\mathcal{GF}^{<\infty}$. They are not abelian in general.

\vskip 5pt

For a class $\mathcal C$  of objects of exact category $\mathcal A$, let
$\mathcal C^{\perp}$ be the class of objects $X$ with ${\rm Ext}^{1}_{\mathcal A}(C, X)=0, \ \forall \ C\in \mathcal C.$ Similarly for  $^{\perp}\mathcal{C}$.
A pair $(\mathcal C, \mathcal F)$ of classes of objects of $\mathcal{A}$ is {\it a cotorsion pair}, if $\mathcal C^{\perp} = \mathcal F$ and $\mathcal C = \ ^{\perp}\mathcal F$.
For a cotorsion pair $(\mathcal C, \mathcal F)$, \ $\mathcal C\cap\mathcal F$ is called its {\it kernal}.

\vskip 5pt

A cotorsion pair $(\mathcal C, \mathcal F)$ is {\it complete}, if for any object $X\in \mathcal A$, there are admissible exact sequences
$$0\longrightarrow F\longrightarrow C\longrightarrow X\longrightarrow 0, \quad \text{and}\quad
0\longrightarrow X\longrightarrow F'\longrightarrow C'\longrightarrow 0,$$
with $C, \ C'\in \mathcal C$, and \ $F, \ F'\in \mathcal F$. For
an exact category with enough projective objects and enough injective objects, the two conditions in the completeness of a cotorsion pair
are equivalent. This is stated for $R$-Mod in \cite[Proposition 7.17]{EJ}; however the argument holds also for
exact categories with enough projective objects and injective objects. Recall that an object $P$ in exact category $\mathcal A = (\mathcal A, \mathcal E)$ is
$\mathcal E$-projective, or simply, projective, if for each admissible exact sequence
$0\longrightarrow X\longrightarrow Y\stackrel d\longrightarrow Z\longrightarrow 0$, the map $\Hom_\mathcal A(P, d)$ is surjective; and that $\mathcal A$ has enough projective objects, if
for any object $M\in\mathcal A$, there is an admissible exact sequence
$0\longrightarrow X\longrightarrow P\longrightarrow M\longrightarrow 0$ with $P$ projective.

\vskip 5pt

A cotorsion pair $(\mathcal C, \mathcal F)$ is {\it cogenerated by a set} if $\mathcal{Y}=\mathcal{S}^{\perp}$, where $\mathcal{S}$ is a set of modules (or, equivalently, cogenerated by a single module).
Any  cotorsion pair in $R$-Mod cogenerated by a set is complete (\cite [Theorem 10]{ET}; also \cite[Theorem 6.11(b)]{GT}: but in \cite{GT} ``cogenerated" is called ``generated"). This result has been generalized to
Grothendieck category with enough projective objects in \cite [Theorem 2.4]{Hov}.

\vskip 5pt

A cotorsion pair $(\mathcal C, \mathcal F)$ is {\it hereditary},
if $\mathcal C$ is closed under the kernel of deflations, and $\mathcal F$ is closed under the cokernel of inflations.
For a cotorsion pair $(\mathcal C, \mathcal F)$ in an abelian category  $\mathcal A$ with enough projective objects and injective objects,
these two conditions of the heredity are equivalent;
and they are equivalent to ${\Ext}^2_\mathcal A(\mathcal C, \mathcal F)=0$ for $C\in \mathcal C$ and $F\in \mathcal F;$ and also to
${\Ext}^i_\mathcal A(\mathcal C, \mathcal F)=0$ for $C\in \mathcal C$, $F\in \mathcal F$, and $i\geq 2$. See \cite [Proposition 1.2.10]{GR}.
This result holds also for a complete cotorsion pair in a weakly idempotent complete exact category. See \cite[Lemma 6.17]{S} (also \cite{SS}).

\subsection{Model structures} \ A closed model structure, or simply, {\it a model structure},  on a category $\mathcal M$, is a triple
\ $(\CoFib$, \ $\Fib$,  \ $\Weq)$ of classes of morphisms,
where the morphisms in the three classes are respectively called {\it cofibrations, fibrations}, and {\it weak equivalences},
satisfying Two out of three axiom, Retract axiom, Lifting axiom, and Factorization axiom.
The morphisms in ${\rm TCoFib}: = \CoFib\cap \Weq$ (respectively, ${\rm TFib}: =\Fib\cap \Weq$) are called {\it trivial cofibrations} (respectively, {\it trivial fibrations}).
For details we refer to \cite {Q1}, or \cite{Hov1}.
In a model structure any two classes of $\CoFib$, \  $\Fib$, \  $\Weq$ uniquely determine the third one (\cite{Q2}).

\vskip 5pt

For a model structure $(\CoFib$, \ $\Fib$,  \ $\Weq)$ on category $\mathcal M$ with zero object,
an object $X$ is {\it trivial} if $0 \longrightarrow X $ is a weak equivalence, or, equivalently,
$X\longrightarrow 0$ is a weak equivalence.
It is {\it cofibrant} if $0\longrightarrow X$ is a cofibration, and it is {\it fibrant} if $X\longrightarrow 0$ is a fibration.
An object is {\it trivially cofibrant} (respectively, {\it trivially fibrant}) if it is both trivial and  cofibrant (respectively, fibrant).
For a model structure  on category $\mathcal M$ with zero object, Quillen's {\it homotopy category} ${\rm Ho}(\mathcal M)$ is the localization $\mathcal M[\Weq^{-1}]$.
If $\mathcal M$ is an additive category, then ${\rm Ho}(\mathcal M)$ is a pretriangulated category in the sense of \cite{BR}. It is not necessarily a triangulated category, in general.
A model structure is {\it trivial} if each object is a trivial object.
The homotopy category of a model structure is zero if and only if it is a trivial model structure (\cite[Theorem 1.2.10(iv)]{Hov1}).
We are only interested in model structures which are not trivial.

\vskip 5pt

A model structure on an exact category is {\it exact} (\cite[3.1]{G}), if
cofibrations are exactly inflations with cofibrant cokernel, and fibrations are exactly
deflations with fibrant kernel. In this case, trivial cofibrations are exactly inflations with trivially cofibrant cokernel, and trivial fibrations are exactly
deflations with trivially fibrant kernel. If $\mathcal A$ is an abelian category, then an exact model structure on $\mathcal A$ is precisely {\it an abelian model structure} in \cite{Hov}.
An exact model structure is {\it projective} if each object is fibrant, or equivalently,
each trivially cofibrant object  is projective (\cite[4.5, 4.6]{G}).
\vskip5pt

{\it A Hovey triple} (\cite{Hov}, \cite{G}) in an exact category $\mathcal A$ is a triple \ $(\mathcal C, \mathcal F, \mathcal W)$ \ of classes of objects such that
\  $\mathcal W$ is {\it thick} in $\mathcal A$ (i.e., $\mathcal W$ is closed under direct summands,
and if two out of three terms in an admissible exact sequence  are in $\mathcal W$, then so is the third one); and that
both $(\mathcal C \cap \mathcal W, \ \mathcal F)$ and \ $(\mathcal C, \ \mathcal F \cap \mathcal W)$ \  are complete cotorsion pairs.
We stress that in this paper a Hovey triple is written in the order \ $(\mathcal C, \mathcal F, \mathcal W)$,  rather than \ $(\mathcal C, \mathcal W, \mathcal F)$.

\vskip 5pt

M. Hovey \cite{Hov} has established a one-one correspondence between abelian model structures and the Hovey triples in an abelian category. This has been extended to
weakly idempotent complete exact category in J. Gillespie \cite[Theorem 3.3]{G} (see also \cite[Theorem 6.9]{S}). Namely,
there is a one-to-one correspondence between exact model structures and the Hovey triples in weakly idempotent complete exact category $\mathcal A$, given by
$$({\rm CoFib}, \ {\rm Fib}, \ {\rm Weq})\mapsto (\mathcal{C}, \ \mathcal{F}, \ \mathcal W)$$
where \ $\mathcal C  = \{\mbox{cofibrant objects}\}, \ \
\mathcal F  = \{\mbox{fibrant objects}\}, \ \
\mathcal W  = \{\mbox {trivial objects}\}$, with the inverse \ \ $(\mathcal{C}, \ \mathcal{F}, \ \mathcal W) \mapsto ({\rm CoFib}, \ {\rm Fib}, \ {\rm Weq}),$ where
\begin{align*} &{\rm CoFib} = \{\mbox{inflation with cokernel in} \ \mathcal{C}\}, \ \ \
{\rm Fib}  = \{\mbox{deflation with kernel in}  \ \mathcal{F} \}, \\
& {\rm Weq}  = \{pi  \mid  i \ \mbox{is an inflation,} \ \Coker i\in \mathcal{C}\cap \mathcal W, \ p \ \mbox{is a deflation,} \ \Ker p\in \mathcal{F}\cap \mathcal W\}\end{align*}
and
$${\rm TCoFib} = \{\mbox{inflation with cokernel in} \ \mathcal C\cap \mathcal W\}, \ \ \ \  {\rm TFib} = \{\mbox{deflation with kernel in}  \ \mathcal{F}\cap \mathcal W \}.$$

\vskip5pt

Thus, we identify a Hovey triple with an exact model structure, in weakly idempotent complete exact category.
A Hovey triple  $(\mathcal C, \mathcal F, \mathcal W)$ is {\it hereditary} if
$(\mathcal C \cap \mathcal W, \ \mathcal F)$ and \ $(\mathcal C, \ \mathcal F \cap \mathcal W)$ \  are hereditary cotorsion pairs.
An advantage of a hereditary Hovey triple is that the homotopy category of the corresponding exact model structure
is just the stable category of a Frobenius category, and hence triangulated. See Theorem \ref{hoherhov} below.
Gillespie \cite[Theorem 1.1]{G2} has
constructed all the hereditary Hovey triples in an abelian category via two {\it compatible} hereditary complete cotorsion pairs.

\vskip5pt

Furthermore, H. Nakaoka and Y. Palu [NP] introduce an extriangulated category, which is a common generalization of an exact category and a triangulated category,
and generalize the Hovey correspondence to this setting up; moreover, remarkably,
the homotopy category is proved to be triangulated,
even if the two compatible complete cotorsion pairs are not necessarily hereditary. See \cite[Theorem 6.20]{NP}, and \cite[Theorem 6.34]{G3}.

\subsection{Homotopy categories of exact model structures}
The following observation is due to J. \v{S}\'{t}ov\'{i}\v{c}ek,
which  shows that the class of weak equivalences of an exact model structure is uniquely
determined only by the class of trivial objects, and hence its homotopy category
is uniquely determined only by $\mathcal W$. This is highly non-trivial.

\vskip5pt

A morphism $f$ in exact category is {\it admissible} if $f=ip$, where $p$ is a deflation and $i$ is an inflation.
In this case, $\Ker f$ and $\Coker f$ exist, and $\Ker f=\Ker p$ and $\Coker f=\Coker i$.
The following description of $\Weq$ in $(2)$ is similar to the multiplicative system determined by a Serre subcategory of an abelian category.

\begin{lem}\label{wdetermineweq} \  Let $\mathcal A$ be a weakly idempotent complete exact category, and $(\CoFib, \ \Fib, \ \Weq)$ an exact model structure on $\mathcal A$ given by Hovey triple \ $(\mathcal C, \ \mathcal F, \ \mathcal W)$. Then

\vskip5pt

$(1)$ \  {\rm(\cite[6.9(b)]{S})} \ $\Weq = \{d\circ i \ | \ i \ \mbox{is an inflation,} \ \Coker i\in \mathcal W, \ d \ \mbox{is a deflation,} \ \Ker d\in \mathcal W\}.$

\vskip5pt

$(2)$ \ Put \ $\mathcal S=\{f\in {\rm Mor}(\mathcal A) \ | \ f \ \text{is admissible},  \ \Ker f\in \mathcal W,  \ \Coker f\in \mathcal W\}.$
Then $$\Weq=\mathcal S\circ \mathcal S:=\{f\in {\rm Mor}(\mathcal A) \ | \ f=f_2f_1, \ f_i\in \mathcal S, \ i=1, 2\}.$$
In particular, $\mathcal S\subseteq \Weq$.
\end{lem}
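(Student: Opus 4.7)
The plan is to deduce part (2) from part (1), which I would simply cite from \v{S}\v{t}ov\'{i}\v{c}ek. The pivotal observation is that the class $\mathcal S$ is exactly tailored to absorb the two building blocks in part (1): any inflation $i$ with $\Coker i\in\mathcal W$ is admissible with $\Ker i=0\in\mathcal W$ (using that $\mathcal W$ is thick and hence contains the zero object), so $i\in\mathcal S$; dually, any deflation $d$ with $\Ker d\in\mathcal W$ has $\Coker d=0\in\mathcal W$, so $d\in\mathcal S$.

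For the inclusion $\Weq\subseteq \mathcal S\circ \mathcal S$, given $f\in\Weq$, part (1) supplies a factorization $f=d\circ i$ with $i$ an inflation of cokernel in $\mathcal W$ and $d$ a deflation of kernel in $\mathcal W$. By the observation above, both $i$ and $d$ lie in $\mathcal S$, so $f\in \mathcal S\circ \mathcal S$.

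For the reverse inclusion, I would first establish $\mathcal S\subseteq \Weq$. Given $f\in \mathcal S$, admissibility yields $f=i\circ p$ where $p$ is a deflation and $i$ is an inflation, with $\Ker p=\Ker f\in \mathcal W$ and $\Coker i=\Coker f\in\mathcal W$. Writing $p=p\circ \mathrm{id}$ (the identity is an inflation with zero, hence in $\mathcal W$, cokernel) fits the form of part (1), so $p\in \Weq$; symmetrically $i=\mathrm{id}\circ i\in \Weq$. By two-out-of-three, $\Weq$ is closed under composition, so $f=i\circ p\in \Weq$. This yields $\mathcal S\subseteq \Weq$, and iterating the composition closure gives $\mathcal S\circ \mathcal S\subseteq \Weq$. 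The last sentence of the statement is the content of $\mathcal S\subseteq \Weq$ already verified.

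No genuine obstacle is expected: once part (1) is in hand, the entire argument is a formal matter of matching the admissibility decomposition to the form of part (1) and invoking two-out-of-three. The only subtlety worth spelling out is that the identity morphism plays the role of the ``free'' inflation or deflation factor in part (1), and that thickness of $\mathcal W$ ensures $0\in\mathcal W$ so that the empty kernels or cokernels created by these identities are admissible for $\mathcal S$.
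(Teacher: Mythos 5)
Your proof is correct, and for the inclusion $\mathcal S\subseteq \Weq$ it takes a genuinely different and more economical route than the paper's. The paper starts from the model-category factorization axiom: given $f\in\mathcal S$, it writes $f=pi$ with $i$ a trivial cofibration and $p$ a fibration, then invokes B\"uhler's four-term exact sequence (\cite[Proposition 8.11]{Bu}) relating $\Ker f$, $\Ker p$, $\Coker i$, $\Coker f$, and uses the thickness of $\mathcal W$ to deduce $\Ker p\in\mathcal W$, i.e.\ that $p$ is in fact a trivial fibration, so that $f$ is a composite of two weak equivalences. You instead exploit the admissibility factorization $f=i\circ p$ (a deflation $p$ followed by an inflation $i$) that is built into the definition of $\mathcal S$, note that $p=p\circ\mathrm{id}$ and $i=\mathrm{id}\circ i$ each literally match the shape in part (1), conclude from (1) that $p$ and $i$ are weak equivalences, and finish by closure of $\Weq$ under composition (two-out-of-three). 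This sidesteps both the factorization axiom and B\"uhler's lemma entirely, which is a real simplification once part (1) is taken as given. The inclusion $\Weq\subseteq\mathcal S\circ\mathcal S$ you argue essentially as the paper does.

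One small imprecision: thickness of $\mathcal W$ alone does not give $0\in\mathcal W$ unless $\mathcal W$ is known to be nonempty. In the Hovey-triple setting this is automatic, since $\mathcal W$ contains the classes $\mathcal C\cap\mathcal W$ and $\mathcal F\cap\mathcal W$ appearing in the two cotorsion pairs, and any class occurring in a cotorsion pair contains the zero object. You may want to say this rather than appeal to thickness.
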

\begin{proof} \ (2) \ Since this assertion seems to be not available in the references, we include a proof.
The inclusion $\Weq\subseteq \mathcal S\circ \mathcal S$ follow from (1); or alternatively, any weak equivalence $w$ has a decomposition
$w =pi$ where $i$ is a trivial cofibration and $p$ is a trivial fibration.
It remains to prove $\mathcal S\circ \mathcal S \subseteq \Weq$. It suffices to prove $\mathcal S\subseteq \Weq$.
Let $f$ be an admissible morphism with $\Ker f\in \mathcal W$ and $\Coker f\in \mathcal W$. Then $f=pi$, where $i$ is a trivial cofibration and $p$ is a fibration.
Thus $i$ is an inflation with $\Coker i\in \mathcal C\cap\mathcal W$ and $p$ is a deflation with $\Ker p\in \mathcal F$.
By \cite[Proposition 8.11]{Bu}, there is an admissible exact sequence
\[
0\longrightarrow \Ker f \longrightarrow \Ker p \longrightarrow
\Coker i \longrightarrow\Coker f\longrightarrow 0.
\]
Then one gets admissible exact sequences
\[
0\longrightarrow \Ker f \longrightarrow \Ker p \longrightarrow
M\longrightarrow 0 \ \ \ \ \ \text{and} \ \ \ 0\longrightarrow M  \longrightarrow
\Coker i \longrightarrow\Coker f\longrightarrow 0.\]
Since $\Coker i\in \mathcal W$ and $\Coker f\in \mathcal W$,  $M\in \mathcal W$. Since $M\in \mathcal W$ and $\Ker f\in \mathcal W$,
$\Ker p\in \mathcal W$. Thus $p$ is a deflation with $\Ker p\in \mathcal F\cap \mathcal W$, and hence $p$ is a trivial fibration. In particular, $p$ is a weak equivalence.
Therefore $f=pi\in \Weq$. \end{proof}

\begin{cor}\label{wdetermineho}  {\rm (\v{S}\'{t}ov\'{i}\v{c}ek)} \ Let $\mathcal A$ be a weakly idempotent complete exact category,
 $(\mathcal C,\mathcal F,\mathcal W)$ and $(\mathcal C',\mathcal F',\mathcal W')$  Hovey triples in $\mathcal A$. If $W=W'$,
then the homotopy categories of the two exact model structures on $\mathcal A$ are the same.
\end{cor}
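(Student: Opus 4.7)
The plan is to deduce this immediately from Lemma \ref{wdetermineweq}(2). The essential content of that lemma is that the class $\Weq$ admits the description $\Weq = \mathcal S \circ \mathcal S$, where
\[
\mathcal S = \{f \in {\rm Mor}(\mathcal A) \mid f \text{ is admissible}, \ \Ker f \in \mathcal W, \ \Coker f \in \mathcal W\}.
\]
The crucial point is that this description of $\mathcal S$ (and hence of $\Weq$) refers only to the class $\mathcal W$ of trivial objects, and does not mention $\mathcal C$ or $\mathcal F$ at all.

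Concretely, I would proceed as follows. First, apply Lemma \ref{wdetermineweq}(2) to each of the two Hovey triples $(\mathcal C, \mathcal F, \mathcal W)$ and $(\mathcal C', \mathcal F', \mathcal W')$ to obtain $\Weq = \mathcal S \circ \mathcal S$ and $\Weq' = \mathcal S' \circ \mathcal S'$. Since by hypothesis $\mathcal W = \mathcal W'$, the defining conditions of $\mathcal S$ and $\mathcal S'$ coincide, so $\mathcal S = \mathcal S'$, whence $\Weq = \Weq'$.

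Finally, recall that for any model structure on a category $\mathcal M$ the homotopy category is by definition the localization $\mathcal M[\Weq^{-1}]$ at the class of weak equivalences (as noted in the preliminaries above). Since the two exact model structures on $\mathcal A$ share the same class of weak equivalences, they share the same localization, and therefore the same homotopy category. There is no real obstacle to overcome at this stage: the non-trivial step, namely the characterization of $\Weq$ purely in terms of $\mathcal W$, has already been carried out in Lemma \ref{wdetermineweq}(2), and the corollary is essentially a direct consequence.
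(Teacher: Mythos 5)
Your argument is correct and matches the paper's own proof: both invoke Lemma \ref{wdetermineweq}(2) to conclude that $\Weq$ depends only on $\mathcal W$, so $\mathcal W = \mathcal W'$ forces $\Weq = \Weq'$, and then the homotopy categories coincide because each is the localization $\mathcal A[\Weq^{-1}]$. Nothing to add.
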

\begin{proof} \ This follows from  Lemma \ref{wdetermineweq} and the definition of the homotopy category.
\end{proof}

\vskip5pt

For an exact model structure on weakly idempotent complete exact category $\mathcal A$, with cofiber sequences and fiber sequences in ${\rm Ho}(\mathcal A)$
(\cite[6.15]{S}; compare \cite[6.2]{Hov1}, or, cofibration sequences and fibration sequences in \cite{Q1}),
the homotopy category ${\rm Ho}(\mathcal A)$ is a pretriangulated category in the sense of \cite{BR}.
The following nice result can be explicitly founded in \v{S}\'{t}ov\'{i}\v{c}ek
\cite[Theorem 6.21]{S}. See also Gillespie \cite[Proposition 4.4, Proposition 5.2]{G} and Becker \cite[Proposition 1.1.14]{Bec}.

\begin{thm} \label{hoherhov} \ Let $\mathcal A$ be a weakly idempotent complete exact category, and  $(\mathcal C,  \ \mathcal F,  \ \mathcal W)$ a  hereditary Hovey triple in $\mathcal A$.
Then \ $\mathcal C \cap \mathcal F$ is a Frobenius category, with the induced exact structure$;$ \ and $\mathcal C \cap \mathcal F\cap \mathcal W$ is the class of projective-injective objects of
$\mathcal C \cap \mathcal F$.

\vskip5pt

Every cofiber sequence in ${\rm Ho}(\mathcal A)$ is isomorphic in ${\rm Ho}(\mathcal A)$ to a distinguished triangle in the stable category $(\mathcal C \cap \mathcal F)/(\mathcal C \cap \mathcal F\cap \mathcal W);$
and conversely every distinguished triangle in  the stable category is isomorphic in ${\rm Ho}(\mathcal A)$ to a cofiber sequence. Thus ${\rm Ho}(\mathcal A)$ is a triangulated category and
the composition $\mathcal C \cap \mathcal F \hookrightarrow \mathcal A \longrightarrow {\rm Ho}(\mathcal A)$ induces a triangle equivalence
$${\rm Ho}(\mathcal A)\cong (\mathcal C \cap \mathcal F)/(\mathcal C \cap \mathcal F\cap \mathcal W).$$
\vskip5pt

Moreover, every conflation $0\longrightarrow X \stackrel u\longrightarrow Y \stackrel v\longrightarrow Z \longrightarrow 0$ in $\mathcal A$ yields a cofiber sequence
$X \stackrel u\longrightarrow Y \stackrel v\longrightarrow Z\longrightarrow \Sigma X$ in ${\rm Ho}(\mathcal A);$ and each cofiber sequence in ${\rm Ho}(\mathcal A)$
is obtained in this way.
\end{thm}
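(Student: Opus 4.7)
The plan is to establish the Frobenius structure on $\mathcal{B}:=\mathcal{C}\cap\mathcal{F}$ first, and then deduce the triangle equivalence from the classical link between hereditary Hovey triples and Frobenius categories. Note that $\mathcal{B}$ inherits an exact structure from $\mathcal{A}$ as the intersection of two extension-closed subcategories. The heredity of $(\mathcal{C},\mathcal{F}\cap\mathcal{W})$ gives that $\mathcal{C}$ is closed under kernels of deflations, while the heredity of $(\mathcal{C}\cap\mathcal{W},\mathcal{F})$ gives that $\mathcal{F}$ is closed under cokernels of inflations; these two closure statements are the essential structural inputs.

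To identify the projective-injectives of $\mathcal{B}$ with $\mathcal{B}\cap\mathcal{W}$, if $P\in\mathcal{B}\cap\mathcal{W}$ then $P\in\mathcal{C}\cap\mathcal{W}$ and $P\in\mathcal{F}\cap\mathcal{W}$, so the two cotorsion pair orthogonality relations give $\Ext^1_{\mathcal{A}}(P,X)=0=\Ext^1_{\mathcal{A}}(X,P)$ for every $X\in\mathcal{B}$, making $P$ projective-injective in $\mathcal{B}$. Conversely, a projective $P\in\mathcal{B}$ fits into a conflation $0\to K\to C\to P\to 0$ with $C\in\mathcal{C}\cap\mathcal{W}$ and $K\in\mathcal{F}$ obtained from completeness of $(\mathcal{C}\cap\mathcal{W},\mathcal{F})$; the closure properties above force $C,K\in\mathcal{B}$, so the sequence is admissible in $\mathcal{B}$ and splits, and thickness of $\mathcal{W}$ then promotes $C\in\mathcal{W}$ to $P\in\mathcal{W}$ (the injective characterization is dual). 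Enough projectives and injectives in $\mathcal{B}$ follow from applying completeness of $(\mathcal{C}\cap\mathcal{W},\mathcal{F})$ and of $(\mathcal{C},\mathcal{F}\cap\mathcal{W})$ respectively to any $X\in\mathcal{B}$, with the same closure arguments verifying that all three terms land in $\mathcal{B}$. This establishes the Frobenius structure.

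For the equivalence $\mathrm{Ho}(\mathcal{A})\simeq \mathcal{B}/(\mathcal{B}\cap\mathcal{W})$, each $X\in\mathcal{A}$ admits a bifibrant replacement in $\mathcal{B}$ built by splicing a cofibrant replacement (from $(\mathcal{C},\mathcal{F}\cap\mathcal{W})$) with a fibrant replacement (from $(\mathcal{C}\cap\mathcal{W},\mathcal{F})$), so the composition $\mathcal{B}\hookrightarrow\mathcal{A}\to\mathrm{Ho}(\mathcal{A})$ is essentially surjective. At the level of hom-sets, Quillen's homotopy relation on maps between bifibrant objects, built from cylinder and path objects using elements of $\mathcal{B}\cap\mathcal{W}$, coincides with the additive stable relation of factoring through $\mathcal{B}\cap\mathcal{W}$; this produces the functorial bijection $\Hom_{\mathrm{Ho}(\mathcal{A})}(X,Y)\cong \underline{\Hom}_{\mathcal{B}}(X,Y)$ for bifibrant $X,Y$ and hence the desired equivalence. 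The triangulated compatibility then follows because any conflation in $\mathcal{A}$ becomes, after termwise bifibrant replacement, a conflation in $\mathcal{B}$ whose associated Frobenius triangle $X\to Y\to Z\to \Sigma X$ matches the model-theoretic cofiber sequence in $\mathrm{Ho}(\mathcal{A})$, and conversely every distinguished triangle in $\mathcal{B}/(\mathcal{B}\cap\mathcal{W})$ arises from a conflation in $\mathcal{B}$.

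The main obstacle I expect is the careful identification of Quillen homotopy with the stable (factoring through projective-injective) relation on bifibrant objects: this is the bridge between the model-categorical and Frobenius-categorical descriptions of $\mathrm{Ho}(\mathcal{A})$. It requires exhibiting, on any bifibrant $X$, cylinder or path objects whose syzygies lie in $\mathcal{B}\cap\mathcal{W}$, which comes down to further applications of completeness and heredity of the two cotorsion pairs. Once this is in place, the remaining content of the theorem, including the statement that every conflation yields a cofiber sequence and each cofiber sequence arises this way, is bookkeeping with the closure properties of $\mathcal{C}$ and $\mathcal{F}$ under (co)kernels of (in/de)flations already established.
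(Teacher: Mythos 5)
The paper does not prove this theorem at all: it is presented as a citation to \v{S}\'{t}ov\'{i}\v{c}ek \cite[Theorem 6.21]{S}, with Gillespie \cite[Propositions 4.4, 5.2]{G} and Becker \cite[Proposition 1.1.14]{Bec} as alternative sources. So there is no ``paper's own proof'' to compare against.

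That said, your outline does follow the same architecture as the proofs in those references, and the individual steps you spell out are correct. The deduction that $\mathcal{C}\cap\mathcal{F}$ is Frobenius is done exactly in the way you describe: extension-closure gives the exact structure; heredity of $(\mathcal{C},\mathcal{F}\cap\mathcal{W})$ resp.\ $(\mathcal{C}\cap\mathcal{W},\mathcal{F})$ gives closure of $\mathcal{C}$ under kernels of deflations and of $\mathcal{F}$ under cokernels of inflations, which is what lets you verify that the approximation sequences supplied by completeness stay inside $\mathcal{B}=\mathcal{C}\cap\mathcal{F}$; and thickness of $\mathcal{W}$ is what promotes a projective (resp.\ injective) object of $\mathcal{B}$ to lie in $\mathcal{W}$ once one has exhibited it as a direct summand of an object of $\mathcal{B}\cap\mathcal{W}$. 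You are also right to flag the identification of the Quillen left/right homotopy relation on bifibrant objects with the relation of factoring through $\mathcal{B}\cap\mathcal{W}$ as the real crux; in \cite{S} this is precisely the work done in \S 6 (via explicit cylinder and path objects, and the comparison of the cofiber and fiber sequence constructions), and in \cite{G} it is Proposition 4.4. Your sketch acknowledges this step rather than proving it, which is appropriate given that the paper itself treats the whole theorem as imported, but a complete write-up would need to construct the very good cylinder $X\oplus X \rightarrowtail \mathrm{Cyl}(X)\twoheadrightarrow X$ with $\mathrm{Cyl}(X)/X\in\mathcal{B}\cap\mathcal{W}$ and dually, and then check that the resulting equivalence takes the cofiber sequences of \cite[Definition 6.15]{S} to Frobenius triangles; this is where the remaining technical content of \cite[Theorem 6.21]{S} lives.
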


\begin{cor} \label{wdeterminetriho} \ {\rm (\v{S}\'{t}ov\'{i}\v{c}ek)} \ Let $\mathcal A$ be a weakly idempotent complete exact category. Let $(\mathcal C,\mathcal F,\mathcal W)$ and $(\mathcal C',\mathcal F',\mathcal W')$ be two hereditary Hovey triples in $\mathcal A$. If $\mathcal W = \mathcal W'$, then $$(\mathcal C\cap \mathcal F)/(\mathcal C\cap \mathcal F\cap \mathcal W)\cong (\mathcal C'\cap \mathcal F')/(\mathcal C'\cap \mathcal F'\cap \mathcal W')$$ as triangulated categories.
\end{cor}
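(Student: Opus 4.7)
The plan is to chain two facts already established in the excerpt: Theorem \ref{hoherhov} identifies the homotopy category of a hereditary exact model structure with a stable Frobenius quotient, while Corollary \ref{wdetermineho} shows that the homotopy category itself is determined by $\mathcal{W}$. Putting these together gives the desired triangle equivalence.

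First, I would apply Theorem \ref{hoherhov} to each hereditary Hovey triple separately to obtain triangle equivalences
\[
{\rm Ho}(\mathcal{A})\cong (\mathcal{C}\cap\mathcal{F})/(\mathcal{C}\cap\mathcal{F}\cap\mathcal{W}) \quad\text{and}\quad {\rm Ho}'(\mathcal{A})\cong (\mathcal{C}'\cap\mathcal{F}')/(\mathcal{C}'\cap\mathcal{F}'\cap\mathcal{W}'),
\]
where ${\rm Ho}(\mathcal{A})$ and ${\rm Ho}'(\mathcal{A})$ denote the homotopy categories associated respectively to the exact model structures $(\mathcal{C},\mathcal{F},\mathcal{W})$ and $(\mathcal{C}',\mathcal{F}',\mathcal{W}')$.

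Next, since $\mathcal{W}=\mathcal{W}'$, Corollary \ref{wdetermineho} (which rests on the description of weak equivalences in Lemma \ref{wdetermineweq}) gives $\mathrm{Weq}=\mathrm{Weq}'$, and hence ${\rm Ho}(\mathcal{A})={\rm Ho}'(\mathcal{A})$ as categories (the localization $\mathcal{A}[\mathrm{Weq}^{-1}]$ depends only on $\mathrm{Weq}$). The subtle point worth isolating is whether the two triangulations on this common category coincide. For that I would invoke the last paragraph of Theorem \ref{hoherhov}: every cofiber sequence in the homotopy category is represented by a conflation in $\mathcal{A}$, and, conversely, every conflation in $\mathcal{A}$ yields such a cofiber sequence. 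Since the class of conflations is intrinsic to the exact structure on $\mathcal{A}$ and does not depend on the choice of Hovey triple, the two candidate triangulations on ${\rm Ho}(\mathcal{A})={\rm Ho}'(\mathcal{A})$ must agree.

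Finally, composing the first triangle equivalence above with the inverse of the second produces the desired triangle equivalence
\[
(\mathcal{C}\cap\mathcal{F})/(\mathcal{C}\cap\mathcal{F}\cap\mathcal{W})\cong (\mathcal{C}'\cap\mathcal{F}')/(\mathcal{C}'\cap\mathcal{F}'\cap\mathcal{W}').
\]
The main conceptual obstacle is really the triangulated-structure compatibility in the middle step; once that is reduced to the intrinsic description of cofiber sequences in terms of conflations, the rest of the argument is a short two-step chase through previously proved statements.
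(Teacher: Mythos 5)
Your proposal tracks the paper's own proof very closely: both reduce the statement to showing that the two triangulations on the common homotopy category coincide, and both appeal to Theorem~\ref{hoherhov}'s characterization of cofiber sequences via conflations. The one step that needs tightening is the final inference. From ``the class of conflations is intrinsic to the exact structure'' it does not by itself follow that the two triangulations agree: a single conflation $0\to X\to Y\to Z\to 0$ could in principle be sent to different cofiber sequences $X\to Y\to Z\to\Sigma X$ by the two model structures, since the suspension $\Sigma$ and the connecting morphism $Z\to\Sigma X$ are constructed from model-structure data, not from the exact structure alone. What closes the gap---and what the paper's proof emphasizes by citing \cite[Definition 6.15]{S}---is that the cofiber sequence attached to a conflation is determined by the exact structure together with $\mathcal W$ alone, independently of $\mathcal C$ and $\mathcal F$. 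Since $\mathcal W=\mathcal W'$, the two assignments ``conflation $\mapsto$ cofiber sequence'' therefore agree, and the triangulations coincide. With that sentence inserted, your argument matches the paper's.
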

\begin{proof} \ By Corollary \ref{wdetermineho}, one only needs to show that the two triangulations in the same homotopy category also coincide. This is true, since by Theorem \ref{hoherhov}, the triangulation is
determined by the cofiber sequences, which are by definition uniquely determined by conflations in $\mathcal A$ and by $\mathcal W$. See \cite[Definition 6.15]{S}.
\end{proof}

\section{\bf Abelian model structures arising from $\mathcal {PGF}_n$}

The following remarkable result is due to \v{S}aroch and \v{S}\'{t}ov\'{i}\v{c}ek \cite{SS2}.

\begin{lem} \label{ctpPGF} \ {\rm (\cite[Theorem 4.9]{SS2})} \ Let $R$ be a ring. Then $\mathcal {PGF}^{\perp}$ is thick$;$ and
$(\mathcal{PGF}, \ \mathcal {PGF}^{\perp})$ is a complete and hereditary cotorsion pair with kernel $\mathcal P$, and cogenerated by a set. \end{lem}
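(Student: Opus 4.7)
The strategy is to reduce the whole statement to the Eklof--Trlifaj theorem: once we show that $\mathcal{PGF}={}^{\perp}(\mathcal{S}^{\perp})$ for some \emph{set} $\mathcal{S}$, the cotorsion pair is automatically complete, and heredity, kernel and thickness follow by routine arguments. The essential task is therefore to establish the deconstructibility of $\mathcal{PGF}$: a cardinal $\kappa$ should exist such that every PGF module admits a continuous filtration with $\leq\kappa$-presented PGF subquotients.

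First I would extract the structural closure properties directly from the definition. A PGF module $M$ is the zeroth syzygy of a totally acyclic complex $\mathbf{P}$ of projectives which remains exact after tensoring with any right injective. The higher syzygies of $\mathbf{P}$ are themselves PGF, so $\mathcal{PGF}$ is closed under syzygies; this will force heredity of the cotorsion pair as soon as existence is established. Closure under arbitrary direct sums is immediate, and closure under extensions follows by applying the horseshoe lemma to the two defining totally acyclic resolutions of the end terms and observing that Tor-acyclicity against right injectives passes to the middle complex.

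The main obstacle is the deconstructibility step. Given $M\in\mathcal{PGF}$ with defining complex $\mathbf{P}$ and a subset $X\subseteq M$ of cardinality $\leq\kappa:=|R|+\aleph_0$, I would construct by a L\"{o}wenheim--Skolem style argument a subcomplex $\mathbf{Q}\hookrightarrow\mathbf{P}$ built from $\leq\kappa$-presented free summands whose zeroth syzygy contains $X$, and such that both $\mathbf{Q}$ and $\mathbf{P}/\mathbf{Q}$ remain exact after tensoring with every right injective; iterating transfinitely yields the desired filtration. The difficulty concentrates entirely in this last condition, since one must close up under witnesses of Tor-vanishing against \emph{every} right injective without losing control of cardinality. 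The key reduction, which is the core technical input of \cite{SS2}, is that this global Tor-vanishing can be detected by testing against a single sufficiently large right module (a fixed injective cogenerator of the right-module category), reducing the closing-off to a set-sized procedure. With deconstructibility in hand, let $\mathcal{S}$ be a set of representatives of $\leq\kappa$-presented PGF modules; Eklof's lemma gives $\mathcal{PGF}\subseteq{}^{\perp}(\mathcal{S}^{\perp})$, and the Eklof--Trlifaj machinery produces the reverse inclusion (every object of $^{\perp}(\mathcal{S}^{\perp})$ is a direct summand of an $\mathcal{S}$-filtered module, hence PGF by extension-closedness), so the pair is cogenerated by $\mathcal{S}$ and therefore complete.

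The remaining claims are now short. For the kernel: any projective is trivially PGF, and conversely if $M\in\mathcal{PGF}\cap\mathcal{PGF}^{\perp}$ then the short exact sequence $0\to M\to P_0\to M'\to 0$ extracted from the defining complex splits, because $M'\in\mathcal{PGF}$ and $\Ext^1_R(M',M)=0$, making $M$ a summand of a projective. For thickness of $\mathcal{PGF}^{\perp}$: closure under direct summands is automatic for a right $^{\perp}$-class, while the two-out-of-three property for admissible short exact sequences is read off from the long exact sequence of $\Ext^{\geq 1}_R(C,-)$ with $C\in\mathcal{PGF}$, once heredity upgrades $\Ext^1_R(\mathcal{PGF},\mathcal{PGF}^{\perp})=0$ to $\Ext^i_R(\mathcal{PGF},\mathcal{PGF}^{\perp})=0$ for all $i\geq 1$.
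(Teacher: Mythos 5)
This lemma is not proved in the paper at all: it is quoted verbatim from \cite[Theorem 4.9]{SS2} and used throughout as a black box. There is therefore no argument of the authors to compare your attempt against --- what you have written is, in effect, a reconstruction of Šaroch and Šťovíček's own proof.

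As such a reconstruction, the routine parts are fine: heredity from closure of $\mathcal{PGF}$ under syzygies, the kernel $\mathcal P$ via the splitting of the cosyzygy sequence, and the reduction of completeness to deconstructibility plus Eklof--Trlifaj are all standard and correct. Your thickness argument has a small gap: in the direction where $0\to X\to Y\to Z\to 0$ has $Y,Z\in\mathcal{PGF}^\perp$, the long exact sequence only gives $\Ext^i_R(C,X)=0$ for $i\ge 2$; to kill $\Ext^1_R(C,X)$ you need that $\mathcal{PGF}$ is also closed under \emph{cosyzygies}, so you can shift $\Ext^1_R(C,X)\cong\Ext^2_R(\Omega^{-1}C,X)=0$. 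That is easily repaired. The genuine problem is the deconstructibility step, which you rightly identify as the heart of the matter but then dispatch with a reduction that does not work. The claim that ``Tor-vanishing against all right injectives can be detected against a single injective cogenerator $E$'' is not available here: an arbitrary injective right module is a summand of a \emph{product} $E^X$, and $-\otimes_R P$ does not commute with infinite products when $P$ is merely projective, so exactness of $E\otimes_R\mathbf P$ gives no control over $E^X\otimes_R\mathbf P$. Nor is this the route taken in \cite{SS2}: there, deconstructibility of $\mathcal{PGF}$ is a consequence of their singular compactness theorem together with a careful analysis of definable classes, and that set-theoretic machinery is precisely what the theorem costs. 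Your plan locates the weight correctly but substitutes for it an argument that does not close.
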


\vskip5pt

The following interesting theorem can be found in E. E. Enochs and O. M. G. Jenda \cite{EJ}.

\begin{lem} \label{ctp7.4.6} \ {\rm (\cite[Theorem 7.4.6]{EJ})} \ Let $R$ be a ring, and $n$ a non-negative integer. Then
$(\mathcal{P}_n, \ \mathcal{P}_n^{\perp})$ is a complete and hereditary cotorsion pair, and cogenerated by a set.\end{lem}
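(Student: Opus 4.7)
The plan is to verify all four assertions --- cotorsion pair, cogenerated by a set, completeness, and heredity --- through a single set-theoretic construction that controls them all at once.

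First I would fix $\kappa := \max\{|R|, \aleph_0\}$ and let $\mathcal{S}$ be a representative set of isomorphism classes of $R$-modules $M$ with ${\rm pd}\,M \le n$ and $|M| \le \kappa$. Since $\mathcal{S}$ is genuinely a set, showing that $\mathcal{S}^\perp = \mathcal{P}_n^\perp$ will simultaneously yield that the candidate cotorsion pair is cogenerated by $\mathcal{S}$ and, via the Eklof--Trlifaj theorem recalled in Section~1.2, that it is complete.

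The crucial and most delicate step is to prove that every $M \in \mathcal{P}_n$ admits an $\mathcal{S}$-filtration, i.e.\ a continuous well-ordered chain $0 = M_0 \subseteq M_1 \subseteq \cdots \subseteq M_\lambda = M$ in which each consecutive quotient $M_{\beta+1}/M_\beta$ is isomorphic to a member of $\mathcal{S}$. I would argue by induction on $|M|$. When $|M| \le \kappa$ the trivial chain $0 \subseteq M$ already works. Otherwise, choose a short exact sequence $0 \to K \to F \to M \to 0$ with $F$ free (so ${\rm pd}\,K \le n-1$), form a filtration of $F$ by free summands of rank $\le \kappa$, and meet it with $K$; invoking Hill's lemma to realign the two filtrations and combining with the inductively supplied filtration on the $K$-side produces a continuous chain on $M$ whose successive quotients have projective dimension $\le n$ and cardinality $\le \kappa$. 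This Hill-type refinement, together with the verification that $\mathcal{P}_n$ is closed under transfinite extensions, is where I expect the main technical difficulty to sit.

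Once the filtration is in hand, Eklof's lemma provides the nontrivial inclusion $\mathcal{S}^\perp \subseteq \mathcal{P}_n^\perp$, and the Eklof--Trlifaj theorem identifies ${}^\perp(\mathcal{S}^\perp)$ with the class of direct summands of $\mathcal{S}$-filtered modules; since $\mathcal{P}_n$ is closed under direct summands, this latter class is precisely $\mathcal{P}_n$. This delivers the cotorsion pair, its completeness, and the fact that it is cogenerated by a set. Heredity is then automatic from the Ext-characterisation of $\mathcal{P}_n^\perp$: for $M \in \mathcal{P}_n$, $N \in \mathcal{P}_n^\perp$ and any $i \ge 1$, a syzygy $K_i$ of $M$ lies in $\mathcal{P}_{\max(n-i,0)} \subseteq \mathcal{P}_n$, so dimension shifting yields $\Ext^{i+1}(M, N) \cong \Ext^1(K_i, N) = 0$. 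Hence $\Ext^{\ge 2}(\mathcal{P}_n, \mathcal{P}_n^\perp) = 0$, which in the presence of completeness is equivalent to heredity by the criterion noted in Section~1.2.
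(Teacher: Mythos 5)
The paper offers no proof here --- Lemma 1.2 is a pure citation to \cite[Theorem 7.4.6]{EJ} --- so the comparison is really with the proof in the cited reference. Your sketch reproduces the standard deconstruction argument (Kaplansky filtration of a free module, Hill-lemma refinement against the kernel, Eklof's lemma and the Eklof--Trlifaj theorem for completeness, dimension shifting for heredity), which is exactly what underlies the Enochs--Jenda statement and its treatment in G\"obel--Trlifaj, so the overall route is right and would go through.

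One exposition issue worth flagging: you announce an induction on the cardinality of $M$, but the inductive appeal is to $K$ in $0\to K\to F\to M\to 0$, and $|K|$ has no reason to be smaller than $|M|$ (it is typically as large as the free module $F$). What actually makes the argument close is induction on the projective dimension $n$: the base case $n=0$ is Kaplansky's theorem, and the step $n-1 \Rightarrow n$ uses the $\mathcal{S}$-filtration of $K$ (valid because $\operatorname{pd} K \le n-1$) plus Hill's lemma to produce a family of submodules of $K$ that can be interleaved with the Kaplansky filtration of $F$ so that the successive quotients of $M$ have projective dimension $\le n$ and cardinality $\le\kappa$. With the induction variable corrected from $|M|$ to $n$, the rest of your reasoning --- Auslander's lemma for closure of $\mathcal{P}_n$ under transfinite extensions, direct summand closure, Eklof--Trlifaj identifying ${}^\perp(\mathcal{S}^\perp)$ with summands of $\mathcal{S}$-filtered modules, and the syzygy computation $\Ext^{i+1}(M,N)\cong\Ext^1(K_i,N)=0$ for heredity --- is sound and standard.
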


\vskip10pt

The PGF version of Auslander - Buchweitz Theorem (\cite[Theorem 1.1]{AB}; see also \cite[Theorem 2.10]{Ho})
can be found in G. Dalezios and I. Emmanouil \cite{DE}.

\vskip5pt

\begin{lem} \label{PGFbasic} {\rm(\cite[Proposition 3.1]{DE})} \ Let $R$ be a ring, and $M$ an $R$-module with ${\rm PGFd} M = n$. Then
there is an exact sequence $0\longrightarrow K\longrightarrow G\stackrel \phi\longrightarrow M\longrightarrow 0$ with $G\in \mathcal {PGF}$ and
${\rm pd} K = n-1$. $($If $n = 0$ then $K = 0.)$   In particular, $\phi$ is a right $\mathcal {PGF}$-approximation of $M$.  \end{lem}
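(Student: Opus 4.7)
The plan is to obtain the desired sequence as the special $\mathcal{PGF}$-precover coming from the complete hereditary cotorsion pair $(\mathcal{PGF},\,\mathcal{PGF}^{\perp})$ provided by Lemma~\ref{ctpPGF}. The case $n=0$ is trivial: take $G=M$ and $K=0$. So assume $n\ge 1$. Completeness of the cotorsion pair produces a short exact sequence
$$0 \longrightarrow K \longrightarrow G \stackrel{\phi}{\longrightarrow} M \longrightarrow 0$$
with $G\in\mathcal{PGF}$ and $K\in\mathcal{PGF}^{\perp}$. The latter condition makes $\Hom_R(G',\phi)$ surjective for every $G'\in\mathcal{PGF}$, so $\phi$ is automatically a right $\mathcal{PGF}$-approximation. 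The standard short-exact-sequence inequality for PGF-dimension (valid because $\mathcal{PGF}$ is closed under extensions and syzygies) gives ${\rm PGFd}\,K \le \max({\rm PGFd}\,G,\,{\rm PGFd}\,M-1)=n-1$; the remaining task is to upgrade this bound from PGF-dimension to projective dimension.

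The crux is therefore the auxiliary claim that
$$\mathcal{PGF}^{\perp}\cap\mathcal{PGF}^{<\infty}=\mathcal{P}^{<\infty},$$
with equality of dimensions, which I would prove by induction on $m={\rm PGFd}\,Y$ for $Y$ in the left-hand side. The case $m=0$ is immediate from the kernel identity $\mathcal{PGF}\cap\mathcal{PGF}^{\perp}=\mathcal{P}$ recorded in Lemma~\ref{ctpPGF}. For $m\ge 1$, choose a short exact sequence $0\to\Omega Y\to P\to Y\to 0$ with $P$ projective; heredity of the cotorsion pair yields $\Ext^{1}_R(G',\Omega Y)\cong\Ext^{2}_R(G',Y)=0$ for every $G'\in\mathcal{PGF}$, so $\Omega Y\in\mathcal{PGF}^{\perp}$, while the dimension inequality gives ${\rm PGFd}\,\Omega Y\le m-1$. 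The inductive hypothesis delivers ${\rm pd}\,\Omega Y\le m-1$, whence ${\rm pd}\,Y\le m$; the reverse inequality follows from ${\rm PGFd}\le {\rm pd}$.

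Applying the auxiliary claim to the module $K$ above yields ${\rm pd}\,K\le n-1$, and equality is forced by the hypothesis ${\rm PGFd}\,M=n$: the inequality ${\rm PGFd}\,M\le \max({\rm PGFd}\,K+1,\,{\rm PGFd}\,G)={\rm PGFd}\,K+1$ applied to the sequence shows ${\rm PGFd}\,K\ge n-1$, so ${\rm pd}\,K={\rm PGFd}\,K=n-1$ exactly. The one non-routine step I anticipate as the main obstacle is the syzygy argument in the auxiliary claim—verifying that $\Omega Y$ remains in $\mathcal{PGF}^{\perp}$ is precisely what requires the heredity of the cotorsion pair asserted by Lemma~\ref{ctpPGF}, and without it the inductive strategy breaks down.
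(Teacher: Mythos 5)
The paper does not prove this lemma itself; it cites \cite[Proposition~3.1]{DE}. Your route --- take the special $\mathcal{PGF}$-precover from completeness of $(\mathcal{PGF},\mathcal{PGF}^{\perp})$, and then show that the kernel, lying in $\mathcal{PGF}^{\perp}$ and of finite PGF dimension, must have equal projective dimension --- is a genuinely different, cotorsion-theoretic alternative to a constructive Auslander--Buchweitz-type induction, and it is sound. Your auxiliary claim is precisely the identity $\mathcal{PGF}^{\perp}\cap\mathcal{PGF}_n=\mathcal{P}_n$ proved in Theorem~\ref{mainthm1}; since the paper derives that identity \emph{using} Lemma~\ref{PGFbasic}, your independent heredity-based induction is exactly what is needed to avoid circularity, and you identify correctly that heredity is where the argument leans.

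The one step you assert but do not justify is the inequality ${\rm PGFd}\,K\le\max\{{\rm PGFd}\,G,\,{\rm PGFd}\,M-1\}$. This is strictly stronger than Lemma~\ref{PGFdim}(i), which only gives $\max\{{\rm PGFd}\,G,\,{\rm PGFd}\,M\}$; note the paper's explicit remark after Lemma~\ref{Gflatdim} that the sharpened ``$-1$'' bound in the Gorenstein flat analogue is not automatic and is genuinely stronger. Your parenthetical reason (``$\mathcal{PGF}$ is closed under extensions and syzygies'') does not by itself produce the $-1$. The sharp bound does hold, but it needs a short argument: choose a projective presentation $0\to\Omega M\to P\to M\to 0$, so that ${\rm PGFd}\,\Omega M = n-1$, and form the pullback $Q$ of $G\to M\leftarrow P$. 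Since $P$ is projective, $0\to K\to Q\to P\to 0$ splits and $Q\cong K\oplus P$, while $0\to\Omega M\to Q\to G\to 0$ together with Lemma~\ref{PGFdim}(ii) gives ${\rm PGFd}\,K={\rm PGFd}\,Q\le\max\{n-1,0\}=n-1$. Once that pullback step is inserted, the proof is complete.
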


The following fact can be found in G. Dalezios, I. Emmanouil \cite{DE}.

\begin{lem} \label{PGFdim} \ {\rm (\cite[Proposition 2.4]{DE})} \ Let $R$ be a ring, and
$0\longrightarrow M_1 \longrightarrow M_2\longrightarrow M_3\longrightarrow 0$ an exact sequence. Then

\vskip5pt

${\rm (i)}$ \ \ ${\rm PGFd}M_1\le {\rm max}\{{\rm PGFd}M_2, {\rm PGFd}M_3\};$

 \vskip5pt

${\rm (ii)}$ \  ${\rm PGFd}M_2\le {\rm max}\{{\rm PGFd}M_1, {\rm PGFd}M_3\};$

\vskip5pt

${\rm (iii)}$ \  ${\rm PGFd}M_3\le {\rm max}\{{\rm PGFd}M_1+1, {\rm PGFd}M_2 \}$.
\end{lem}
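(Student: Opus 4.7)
The plan is to reduce the three dimension inequalities to the single Ext-characterization
$$
{\rm PGFd}\, M \le n \iff \Ext^{n+1}_R(M, Y) = 0 \ \text{ for every } \ Y \in \mathcal{PGF}^\perp.
$$
Once this is available, each part follows by inspecting one segment of the $\Ext^{*}(-,Y)$ long exact sequence attached to $0 \to M_1 \to M_2 \to M_3 \to 0$.

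I would establish the characterization using the cotorsion pair $(\mathcal{PGF}, \mathcal{PGF}^\perp)$ of Lemma \ref{ctpPGF}. For the forward direction, heredity yields $\Ext^i_R(G, Y) = 0$ for all $i \ge 1$ whenever $G \in \mathcal{PGF}$ and $Y \in \mathcal{PGF}^\perp$; dimension-shifting along a PGF resolution $0 \to G_n \to \cdots \to G_0 \to M \to 0$ of length $n$ then gives $\Ext^{n+1}_R(M, Y) \cong \Ext^1_R(G_n, Y) = 0$, and more generally $\Ext^i_R(M, Y) = 0$ for every $i \ge n+1$. For the reverse direction, fix any projective resolution of $M$ and let $K$ be its $n$-th syzygy; then $\Ext^1_R(K, Y) \cong \Ext^{n+1}_R(M, Y) = 0$ for all $Y \in \mathcal{PGF}^\perp$, so $K \in {}^\perp(\mathcal{PGF}^\perp) = \mathcal{PGF}$ by the defining identity of the cotorsion pair, and truncating the projective resolution at $K$ yields a PGF resolution of $M$ of length $n$.

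With the characterization in hand, fix $Y \in \mathcal{PGF}^\perp$ and read off the long exact sequence at the appropriate degree. For (i), set $n = \max\{{\rm PGFd}\, M_2, {\rm PGFd}\, M_3\}$: the piece $\Ext^{n+1}(M_2, Y) \to \Ext^{n+1}(M_1, Y) \to \Ext^{n+2}(M_3, Y)$ has both outer terms zero, so ${\rm PGFd}\, M_1 \le n$. Part (ii) is symmetric, reading $\Ext^{n+1}(M_2, Y) = 0$ from its two vanishing neighbours. For (iii), set $p = {\rm PGFd}\, M_1$, $q = {\rm PGFd}\, M_2$ and $N = \max(p+1, q)$; in $\Ext^N(M_1, Y) \to \Ext^{N+1}(M_3, Y) \to \Ext^{N+1}(M_2, Y)$ the left term vanishes because $N \ge p+1$ and the right because $N+1 \ge q+1$, forcing $\Ext^{N+1}(M_3, Y) = 0$ and hence ${\rm PGFd}\, M_3 \le N$.

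I do not expect a serious obstacle. The one delicate point is the reverse direction of the Ext-characterization, where I must simultaneously invoke the defining identity $\mathcal{PGF} = {}^\perp(\mathcal{PGF}^\perp)$ and the heredity of the cotorsion pair (to justify the iterated dimension shift that identifies $\Ext^{n+1}_R(M, Y)$ with $\Ext^1_R(K, Y)$). Both ingredients are supplied by Lemma \ref{ctpPGF}.
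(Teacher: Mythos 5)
Your proof is correct. The paper cites this lemma from \cite[Proposition 2.4]{DE} without reproducing a proof, so there is no internal argument to compare against; but the route you take—first establishing the characterization that ${\rm PGFd}\,M\le n$ if and only if $\Ext^{n+1}_R(M,Y)=0$ for all $Y\in\mathcal{PGF}^\perp$, using heredity of $(\mathcal{PGF},\mathcal{PGF}^\perp)$ for the forward dimension shift and the identity $\mathcal{PGF}={}^\perp(\mathcal{PGF}^\perp)$ for the reverse—is the standard and natural one, and all three inequalities then fall out of the long exact $\Ext$-sequence exactly as you describe. One small remark: your characterization actually delivers the sharper bound ${\rm PGFd}\,M_1\le\max\{{\rm PGFd}\,M_2,\ {\rm PGFd}\,M_3-1\}$ in (i) (take $n$ to be that maximum and check $\Ext^{n+1}(M_2,Y)=0=\Ext^{n+2}(M_3,Y)$), which is the analogue of the improvement the paper emphasizes for Gorenstein flat dimension in Lemma~\ref{Gflatdim}(i); the weaker statement (i) as given is of course a consequence.
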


The following result describes the modules of finite PGF dimension $\le n$. It is a theorem of \v{S}aroch - \v{S}\'{t}ov\'{i}\v{c}ek type:
\cite[Theorem 4.11]{SS2} is the similar result, for Gorenstein flat modules.

\begin{thm} \label{mainthm1} \ Let $R$ be a ring, $n$ a non-negative integer,  and $M$ an $R$-module. Then
$\mathcal {PGF}^{\perp}\cap\mathcal{PGF}_n  = \mathcal{P}_n;$ and the following are equivalent$:$

\vskip5pt

{\rm (1)} \ $M\in \mathcal{PGF}_n.$

\vskip5pt

{\rm (2)} \ There is an exact sequence
$0 \longrightarrow K \longrightarrow G \longrightarrow M \longrightarrow 0$ which is again exact after applying $\operatorname{Hom}_R(-, X)$ for $X\in \mathcal P_n^\perp$, where $G \in \mathcal{PGF}$ and $K \in \mathcal{P}_{n-1}$. $($If $n = 0$, then $K = 0.)$

\vskip5pt

{\rm (3)} \ $\operatorname{Ext}_R^1(M, C)=0$ for every module $C \in \mathcal{P}_n^{\perp}\cap \mathcal {PGF}^{\perp}.$

\vskip5pt

{\rm (4)} \ There is an exact sequence \ $0\longrightarrow M \longrightarrow L\longrightarrow  N\longrightarrow 0$ with $L \in \mathcal{P}_n$ and $N \in \mathcal{PGF}$.
\end{thm}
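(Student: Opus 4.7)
The plan is to prove the set equality $\mathcal{PGF}^{\perp}\cap \mathcal{PGF}_n=\mathcal{P}_n$ first and then close the loop via $(1)\Rightarrow(4)\Rightarrow(2)\Rightarrow(1)$ together with $(4)\Rightarrow(3)\Rightarrow(1)$. For the equality, the inclusion $\mathcal{P}_n \subseteq \mathcal{PGF}^{\perp}$ follows by induction on $n$ from $\mathcal{P}=\mathcal{PGF}\cap\mathcal{PGF}^{\perp}\subseteq \mathcal{PGF}^{\perp}$ and the heredity of $(\mathcal{PGF},\mathcal{PGF}^{\perp})$, which makes $\mathcal{PGF}^{\perp}$ closed under cokernels of inflations. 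Conversely, for $M\in \mathcal{PGF}^{\perp}\cap\mathcal{PGF}_n$, the $n$-th syzygy $\Omega^n M$ in any projective resolution lies in $\mathcal{PGF}$ (as ${\rm PGFd}M\le n$), and dimension shifting combined with heredity gives $\Ext^1_R(G,\Omega^n M)\cong \Ext^{n+1}_R(G,M)=0$ for $G\in\mathcal{PGF}$; hence $\Omega^n M\in \mathcal{PGF}\cap \mathcal{PGF}^{\perp}=\mathcal{P}$ and ${\rm pd}M\le n$.

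For $(1)\Rightarrow(4)$ I would apply the completeness of $(\mathcal{PGF},\mathcal{PGF}^{\perp})$ from Lemma \ref{ctpPGF} to obtain $0\to M\to L\to N\to 0$ with $L\in\mathcal{PGF}^{\perp}$ and $N\in\mathcal{PGF}$; Lemma \ref{PGFdim}(i) forces ${\rm PGFd}L\le n$, so $L\in\mathcal{PGF}^{\perp}\cap\mathcal{PGF}_n=\mathcal{P}_n$ by the equality just proved. For $(4)\Rightarrow(3)$, apply $\Hom_R(-,C)$ to the sequence in (4) and use $\Ext^1_R(L,C)=0$ (since $L\in\mathcal{P}_n$, $C\in\mathcal{P}_n^{\perp}$) and $\Ext^i_R(N,C)=0$ for $i\ge 1$ (since $N\in\mathcal{PGF}$, $C\in\mathcal{PGF}^{\perp}$, plus heredity); the long exact sequence then forces $\Ext^1_R(M,C)=0$.

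The crux is $(4)\Rightarrow(2)$. Starting from the sequence in (4), I would pick a projective surjection $\epsilon:P_0\twoheadrightarrow L$ with kernel $\Omega L\in\mathcal{P}_{n-1}$ (using $L\in\mathcal{P}_n$), set $G:=\epsilon^{-1}(M)$, and split the composite $P_0\twoheadrightarrow L\twoheadrightarrow N$ into two short exact sequences
\[0\to G\to P_0\to N\to 0,\qquad 0\to \Omega L\to G\to M\to 0,\]
which fit into a morphism of short exact sequences from the second one onto $0\to \Omega L\to P_0\to L\to 0$ (identity on $\Omega L$, inclusions $G\hookrightarrow P_0$ and $M\hookrightarrow L$). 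The first sequence forces $G\in \mathcal{PGF}$ by the resolving property of $\mathcal{PGF}$, and with $K:=\Omega L\in\mathcal{P}_{n-1}$ the second is the candidate sequence for (2). For the $\Hom_R(-,X)$-exactness with $X\in\mathcal{P}_n^{\perp}$, naturality of the connecting homomorphism across the morphism of SES identifies the top connecting $\delta_{\mathrm{top}}:\Hom_R(\Omega L,X)\to \Ext^1_R(M,X)$ with the composite of $\delta_{\mathrm{bot}}:\Hom_R(\Omega L,X)\to\Ext^1_R(L,X)$ and the map $\Ext^1_R(L,X)\to \Ext^1_R(M,X)$ induced by $M\hookrightarrow L$; but $\Ext^1_R(L,X)=0$ since $L\in\mathcal{P}_n$, so $\delta_{\mathrm{top}}=0$.

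The remaining implications are short. $(2)\Rightarrow(1)$ follows from Lemma \ref{PGFdim}(iii), giving ${\rm PGFd}M\le\max\{{\rm PGFd}K+1,{\rm PGFd}G\}\le n$. For $(3)\Rightarrow(1)$, given an arbitrary $H\in\mathcal{PGF}^{\perp}$, take an $n$-step injective coresolution $0\to H\to I^0\to\cdots\to I^{n-1}\to C\to 0$; dimension shift yields $\Ext^1_R(A,C)\cong \Ext^{n+1}_R(A,H)$, which vanishes for $A\in\mathcal{P}_n$ (as ${\rm pd}A\le n$) and for $A\in\mathcal{PGF}$ (by heredity applied to $H\in\mathcal{PGF}^{\perp}$), so $C\in\mathcal{P}_n^{\perp}\cap\mathcal{PGF}^{\perp}$. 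Condition (3) then gives $\Ext^1_R(M,C)\cong\Ext^{n+1}_R(M,H)=0$ for every such $H$, hence the $n$-th syzygy of $M$ lies in ${}^{\perp}(\mathcal{PGF}^{\perp})=\mathcal{PGF}$ and $M\in\mathcal{PGF}_n$. The main obstacle I anticipate is $(4)\Rightarrow(2)$: the generic approximation furnished by Lemma \ref{PGFbasic} is not a priori $\Hom_R(-,X)$-exact, so one cannot use it directly; one must construct the sequence by hand from a projective surjection onto $L$ and handle the variances in the naturality argument carefully.
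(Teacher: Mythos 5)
Your proof is correct, but it takes a genuinely different route from the paper's, so it is worth recording the differences. The paper runs the single cycle $(1)\Rightarrow(2)\Rightarrow(3)\Rightarrow(4)\Rightarrow(1)$ and only afterwards extracts the equality $\mathcal{PGF}^{\perp}\cap\mathcal{PGF}_n=\mathcal{P}_n$ as an easy consequence of~(4); you instead establish the set equality \emph{up front} by a syzygy dimension-shift (using that $\mathcal{PGF}$ is resolving, so $\Omega^n M\in\mathcal{PGF}$ whenever ${\rm PGFd}\,M\le n$), and then close the equivalence via $(1)\Rightarrow(4)\Rightarrow(2)\Rightarrow(1)$ together with $(4)\Rightarrow(3)\Rightarrow(1)$. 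Concretely, where the paper obtains~(2) from~(1) by taking the Dalezios--Emmanouil approximation $0\to K\to G\to M\to 0$ of Lemma~\ref{PGFbasic} and then pushing out along $G\hookrightarrow P$, you obtain~(2) from~(4) by pulling back a projective cover of $L$ along $M\hookrightarrow L$ and deriving the $\Hom_R(-,X)$-exactness from naturality of the connecting homomorphism against the SES $0\to\Omega L\to P_0\to L\to 0$; and where the paper proves $(3)\Rightarrow(4)$ by composing the two special approximations and invoking the thickness of $\mathcal{PGF}^{\perp}$, you prove $(3)\Rightarrow(1)$ by forming the $n$-th cosyzygy $C$ of an arbitrary $H\in\mathcal{PGF}^{\perp}$, showing $C\in\mathcal{P}_n^{\perp}\cap\mathcal{PGF}^{\perp}$, and concluding $\Ext^{n+1}_R(M,H)=0$ for all such $H$, hence $\Omega^n M\in{}^{\perp}(\mathcal{PGF}^{\perp})=\mathcal{PGF}$. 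Both routes are valid; the paper's keeps the argument inside the cotorsion-pair machinery and does not need the resolving/syzygy characterization of ${\rm PGFd}$, while yours front-loads that characterization and in exchange the implications become shorter and avoid Lemma~\ref{PGFbasic} entirely. One small citation slip: in $(1)\Rightarrow(4)$, to bound the middle term $L$ of $0\to M\to L\to N\to 0$ you need Lemma~\ref{PGFdim}(ii), not~(i). You should also cite explicitly (e.g.\ \cite[Proposition~2.3]{DE}) the fact that ${\rm PGFd}\,M\le n$ forces the $n$-th syzygy of any projective resolution to lie in $\mathcal{PGF}$, since you rely on it twice (in the equality and in $(3)\Rightarrow(1)$).
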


\begin{proof} \ $(1)\Longrightarrow (2)$:  \ Assume that $M\in \mathcal{PGF}_n.$ The case for $n=0$ is trivial. Assume that $n \geq 1$. By
Lemma \ref{PGFbasic} there is an exact sequence
$0\longrightarrow K \longrightarrow G \longrightarrow M \longrightarrow 0$
with $G \in \mathcal{PGF}$ and $K\in \mathcal{P}_{n-1}$.
It remains to  see that this exact sequence remains exact after applying $\operatorname{Hom}_R(-, X)$ for $X\in \mathcal P_n^\perp$.
Since $G \in \mathcal{PGF}$, by definition
there is an exact sequence $0\longrightarrow G \longrightarrow P \longrightarrow L\longrightarrow 0$ with $P$  projective and $L \in \mathcal{PGF}$.
Take the pushout

\[
\xymatrix@C=30pt{
  & & 0\ar[d]  & 0\ar[d] \\
0\ar[r] & K\ar[r]\ar@{=}[d]  & G\ar[r]\ar[d]  &  M\ar[r]\ar@{.>}[d] & 0\\
0\ar[r] & K \ar[r]  & P\ar@{.>}[r]\ar[d]  & H\ar[r]\ar[d] & 0\\
& &L \ar[d]\ar@{=}[r] & L\ar[d]\\
& & 0 &0 }
\]
It is clear that $H \in \mathcal{P}_n$. For any module $X\in \mathcal P_n^\perp$, in the following commutative diagram with exact rows

\[
\xymatrix@C=20pt{
0\ar[r] & \operatorname{Hom}_R(H, X)\ar[r]\ar[d]  &  \operatorname{Hom}_R(P, X)\ar[r]\ar[d]  &   \operatorname{Hom}_R(K, X) \ar[r]\ar@{=}[d] &  \operatorname{Ext}_{R}^{1}(H, X)\\
0\ar[r] & \operatorname{Hom}_R(M, X)\ar[r]  &  \operatorname{Hom}_R(G, X)\ar[r]  &   \operatorname{Hom}_R(K, X)}
\]
one has $\operatorname{Ext}_{R}^{1}(H, X) = 0$ since $H\in \mathcal{P}_n$ and $X\in \mathcal{P}_n^\perp$. It follows that the map
$\operatorname{Hom}_R(G, X)\longrightarrow \operatorname{Hom}_R(K, X)$ is surjective.

\vskip 5pt

$(2)\Longrightarrow (3):$  \ Assume that
$0 \longrightarrow K \stackrel f \longrightarrow G \longrightarrow M \longrightarrow 0$  is an exact sequence
which remains exact after applying $\operatorname{Hom}_R(-, X)$ for $X\in \mathcal P_n^\perp$, where $G \in \mathcal{PGF}$ and $K \in \mathcal{P}_{n-1}$. (If $n=0$ then $K=0$.)
For any module $C \in \mathcal{P}_n^{\perp}\cap \mathcal {PGF}^{\perp}$ one has $\Ext^1_R(G, C) = 0$.  Thus there is an exact sequence
$$\Hom_R(G, C)\stackrel {(f, C)} \longrightarrow \Hom_R(K, C) \longrightarrow {\rm Ext}_R^1(M, C)\longrightarrow 0.$$
But by the assumption the map  $(f, C)$ is surjective, it follows that ${\rm Ext}_R^1(M, C) = 0.$

\vskip 5pt

$(3)\Longrightarrow (4)$: \ Assume that $\operatorname{Ext}_R^1(M, C)=0$ for every module $C \in \mathcal{P}_n^{\perp}\cap \mathcal {PGF}^{\perp}.$
By Lemma~\ref{ctpPGF} one gets an exact sequence $0\longrightarrow M \longrightarrow L\longrightarrow N\longrightarrow 0$ with $L\in \mathcal {PGF}^{\perp}$ and $N\in \mathcal{PGF}$. Since $\operatorname{Ext}_R^1(N, C)=0$ for  $C \in \mathcal{P}_n^{\perp}\cap \mathcal {PGF}^{\perp},$
it follows that ${\rm Ext}^{1}_{R}(L, C)=0$ for  $C\in \mathcal{P}_n^{\perp}\cap \mathcal {PGF}^{\perp}$.

\vskip 5pt

By Lemma~\ref{ctp7.4.6} there is an exact sequence
$0\longrightarrow \Ker \pi \longrightarrow L^{\prime}\stackrel \pi\longrightarrow L\longrightarrow 0$ with
$L^{\prime}\in \mathcal P_n$ and ${\rm Ker}\pi\in \mathcal P_n^\perp$. Since $\mathcal{PGF}\subseteq \mathcal {GP}$, it follows that
$L^{\prime}\in \mathcal P_n \subseteq \mathcal {GP}^\perp\subseteq \mathcal {PGF}^\perp$.
Since both $L$ and $L^{\prime}$ are in $\mathcal {PGF}^\perp$, it follows from the thickness of $\mathcal {PGF}^\perp$ that
${\rm Ker}\pi\in \mathcal {PGF}^\perp$, and hence ${\rm Ker}\pi\in \mathcal{P}_n^{\perp}\cap \mathcal {PGF}^\perp$. Therefore
${\rm Ext}^{1}_{R}(L, {\rm Ker}\pi)=0$, and then $\pi$ splits and $L\in \mathcal{P}_n$.

\vskip 5pt

$(4) \Longrightarrow (1)$:  Assume that \ $0\longrightarrow M \longrightarrow L\longrightarrow  N\longrightarrow 0$ is an exact sequence with $L \in \mathcal{P}_n$ and $N \in \mathcal{PGF}$. Since $L \in \mathcal{P}_n\subseteq \mathcal{PGF}_n$ and $N \in \mathcal{PGF}\subseteq \mathcal{PGF}_n$, it follows $M\in \mathcal{PGF}_n$ (c.f. Lemma \ref{PGFdim}).

\vskip5pt

Finally, we show $\mathcal {PGF}^{\perp}\cap\mathcal{PGF}_n = \mathcal{P}_n$. Since $\mathcal{P}_n\subseteq \mathcal {GP}^{\perp} \subseteq\mathcal {PGF}^{\perp}$ and $\mathcal{P}_n\subseteq \mathcal{PGF}_n$,
thus $\mathcal{P}_n\subseteq \mathcal {PGF}^{\perp}\cap\mathcal{PGF}_n$.
On the other hand, if $M \in \mathcal {PGF}^{\perp}\cap\mathcal{PGF}_n$, then by assertion (4) there is
an exact sequence  $0 \longrightarrow M \longrightarrow L \longrightarrow N \longrightarrow 0$ with $L\in\mathcal P_n$ and $N \in \mathcal{PGF}$.
Thus ${\rm Ext}^1_R(N, M) = 0.$ Hence this exact sequence splits and $M \in \mathcal{P}_n$.
\end{proof}

\vskip5pt

When $n = 0$, the following result coincides with \cite[Theorem 4.9]{SS2}.

\begin{cor} \label{mainthm2} \ Let $R$ be a ring and $n$ a non-negative integer. Then

\vskip5pt

{\rm (1)} \ The pair \ $(\mathcal{PGF}_n, \  \mathcal{P}_n^{\perp} \cap \mathcal {PGF}^{\perp})$ is a complete and hereditary cotorsion pair in $R\mbox{-}{\rm Mod}$ with
kernel $\mathcal{P}_n\cap\mathcal{P}_n^{\perp}$. This cotorsion pair is cogenerated by a set.

\vskip 5pt

{\rm (2)}  \ The triple \ $(\mathcal{PGF}_n, \ \mathcal{P}_n^{\perp}, \ \mathcal {PGF}^{\perp})$ is a hereditary Hovey triple in $R\mbox{-}{\rm Mod};$
$\mathcal{PGF}_n\cap\mathcal{P}_n^{\perp}$ is a Frobenius category such that $\mathcal P_n\cap \mathcal P_n^{\perp}$ is the class of projective-injective objects$;$ and the homotopy category
is the stable category $(\mathcal{PGF}_n\cap \mathcal{P}_n^{\perp})/(\mathcal  P_n\cap \mathcal{P}_n^{\perp})$, which is triangle equivalent to \ $\mathcal{PGF}/\mathcal P.$

\vskip 5pt

{\rm (3)}  \ If $n\ge 1$, then the corresponding  abelian model structure is projective if and only if $\mathcal{P}^{<\infty} = \mathcal{P}$.
\end{cor}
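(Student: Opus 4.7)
For part (1), my plan is to bootstrap from Theorem \ref{mainthm1}. The equivalence (1)$\Leftrightarrow$(3) there already gives ${}^{\perp}(\mathcal{P}_n^\perp \cap \mathcal{PGF}^\perp) = \mathcal{PGF}_n$, and the reverse containment $\mathcal{PGF}_n^\perp \supseteq \mathcal{P}_n^\perp \cap \mathcal{PGF}^\perp$ also follows from condition (3) of that theorem; the inclusion $\mathcal{PGF}_n^\perp \subseteq \mathcal{P}_n^\perp \cap \mathcal{PGF}^\perp$ is immediate from $\mathcal{P}_n \cup \mathcal{PGF} \subseteq \mathcal{PGF}_n$. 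Thus $(\mathcal{PGF}_n, \mathcal{P}_n^\perp \cap \mathcal{PGF}^\perp)$ is a cotorsion pair. For cogeneration by a set, I would take sets $\mathcal{S}_1$ and $\mathcal{S}_2$ with $\mathcal{P}_n^\perp = \mathcal{S}_1^\perp$ (Lemma \ref{ctp7.4.6}) and $\mathcal{PGF}^\perp = \mathcal{S}_2^\perp$ (Lemma \ref{ctpPGF}), so that the pair is cogenerated by $\mathcal{S}_1 \cup \mathcal{S}_2$; completeness then follows from the Eklof--Trlifaj theorem quoted in the preliminaries. Heredity will follow once I observe that $\mathcal{PGF}_n$ is closed under kernels of epimorphisms, which is Lemma \ref{PGFdim}(i). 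Finally, the kernel equals $\mathcal{PGF}_n \cap \mathcal{P}_n^\perp \cap \mathcal{PGF}^\perp$; applying the identity $\mathcal{PGF}^\perp \cap \mathcal{PGF}_n = \mathcal{P}_n$ from Theorem \ref{mainthm1} reduces this to $\mathcal{P}_n \cap \mathcal{P}_n^\perp$.

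For part (2), I need to check the Hovey triple conditions for $(\mathcal{PGF}_n, \mathcal{P}_n^\perp, \mathcal{PGF}^\perp)$. Thickness of $\mathcal{W} := \mathcal{PGF}^\perp$ is given by Lemma \ref{ctpPGF}. The cotorsion pair $(\mathcal{PGF}_n, \mathcal{P}_n^\perp \cap \mathcal{PGF}^\perp)$ is complete and hereditary by part (1). For the other pair, using $\mathcal{PGF}_n \cap \mathcal{PGF}^\perp = \mathcal{P}_n$ (Theorem \ref{mainthm1}), I see that $(\mathcal{PGF}_n \cap \mathcal{W}, \mathcal{P}_n^\perp) = (\mathcal{P}_n, \mathcal{P}_n^\perp)$, which is complete and hereditary by Lemma \ref{ctp7.4.6}. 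Theorem \ref{hoherhov} then identifies the homotopy category as the stable category of the Frobenius category $\mathcal{PGF}_n \cap \mathcal{P}_n^\perp$ modulo the projective-injectives $\mathcal{P}_n \cap \mathcal{P}_n^\perp$. For the triangle equivalence with $\mathcal{PGF}/\mathcal{P}$, I would observe that the Hovey triple $(\mathcal{PGF}, R\text{-}\mathrm{Mod}, \mathcal{PGF}^\perp)$ of Lemma \ref{ctpPGF} has the same class of trivial objects $\mathcal{PGF}^\perp$ and invoke Corollary \ref{wdeterminetriho}.

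For part (3), the class of trivially cofibrant objects of the model structure is $\mathcal{PGF}_n \cap \mathcal{W} = \mathcal{PGF}_n \cap \mathcal{PGF}^\perp = \mathcal{P}_n$. By the characterization recalled in the preliminaries, the model structure is projective precisely when every trivially cofibrant object is projective, i.e.\ $\mathcal{P}_n = \mathcal{P}$. Assuming $n \geq 1$, I then need the equivalence $\mathcal{P}_n = \mathcal{P} \Longleftrightarrow \mathcal{P}^{<\infty} = \mathcal{P}$. The implication $(\Leftarrow)$ is trivial since $\mathcal{P}_n \subseteq \mathcal{P}^{<\infty}$. For $(\Rightarrow)$, given $M$ with $\mathrm{pd}\,M = k < \infty$, I induct on $k$: if $k \geq 1$ then a $(k-n)$-th syzygy (truncated at $0$) has projective dimension at most $n$, hence is projective, which forces $\mathrm{pd}\,M < k$; iterating gives $M \in \mathcal{P}$.

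The steps requiring care are the identification of kernels and the verification that the two cotorsion pairs of the Hovey triple are the right ones; the rest is essentially bookkeeping against Theorem \ref{mainthm1}, Lemma \ref{ctpPGF}, Lemma \ref{ctp7.4.6}, Theorem \ref{hoherhov}, and Corollary \ref{wdeterminetriho}. I do not foresee a serious obstacle, since Theorem \ref{mainthm1} has already done the substantive homological work.
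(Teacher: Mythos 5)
Your proposal is correct and follows essentially the same path as the paper: derive the cotorsion pair and its cogeneration-by-a-set from Theorem \ref{mainthm1}, Lemma \ref{ctpPGF} and Lemma \ref{ctp7.4.6}; assemble the Hovey triple from the two constituent cotorsion pairs; and invoke Theorem \ref{hoherhov} plus Corollary \ref{wdeterminetriho} for the triangle equivalence. The only superficial variation is in part (3), where you use the trivially-cofibrant characterization of projectivity (and spell out the syzygy argument for $\mathcal{P}_n = \mathcal{P} \Leftrightarrow \mathcal{P}^{<\infty} = \mathcal{P}$), while the paper uses the dual fibrant-object characterization and leaves the last equivalence as an assertion; both are equivalent by the remarks in the preliminaries.
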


\begin{proof} \ (1) \ Put \  $\mathcal{A}=\mathcal{PGF}_n$ and $\mathcal{B} =\mathcal{P}_n^{\perp} \cap \mathcal {PGF}^{\perp}$. By Theorem \ref{mainthm1}(3) one has $\mathcal{A}={ }^{\perp} \mathcal{B}$ and $\mathcal B\subseteq \mathcal A^\perp$. On the other hand, one has
$$\mathcal{P}_n\cup \mathcal{PGF}\subseteq \mathcal{PGF}_n\cup \mathcal{PGF} = \mathcal{PGF}_n.$$ Thus
$$\mathcal{A}^{\perp}= \mathcal{PGF}_n^{\perp} \subseteq (\mathcal P_n\cup \mathcal{PGF})^\perp = \mathcal{P}_n^{\perp} \cap \mathcal {PGF}^{\perp} = \mathcal{B}.$$
Hence $\mathcal{B} = \mathcal{A}^{\perp}$ and $(\mathcal{A}, \mathcal{B})$ is a cotorsion pair.
It is hereditary, since $\mathcal{PGF}_n$ is closed under taking kernels of epimorphisms (cf. Lemma \ref{PGFdim}); or equivalently
$\mathcal{P}_n^{\perp}$, and hence $\mathcal{P}_n^{\perp} \cap \mathcal {PGF}^{\perp}$,  is closed under taking cokernels of monomorphisms.

\vskip 5pt

To prove that it is complete, it suffices to show that it is cogenerated by a set.
By  \cite[Theorem 4.9]{SS2} (cf. Lemma \ref{ctpPGF}), the cotorsion pair \ $(\mathcal{PGF}, \ \mathcal {PGF}^{\perp})$ is cogenerated by a set, say  $T$.
Also the cotorsion pair \ $(\mathcal{P}_n, \mathcal{P}_n^{\perp})$ is cogenerated by a set, say  $B$. Then
$\mathcal{P}_n^{\perp}\cap \mathcal {PGF}^{\perp} = B^{\perp} \cap T^{\perp} = (B \cup T)^{\perp}$, i.e.,
the cotorsion pair \ $(\mathcal{PGF}_n, \  \mathcal{P}_n^{\perp} \cap \mathcal {PGF}^{\perp})$ is cogenerated by the set  $B \cup T$.

\vskip 5pt

By  Theorem \ref{mainthm1}, $\mathcal{PGF}_n\cap \mathcal {PGF}^{\perp} = \mathcal{P}_n$, and hence the kernel of this cotorsion pair is  $\mathcal{P}_n\cap\mathcal{P}_n^{\perp}$.

\vskip 5pt

(2) \ Put $(\mathcal C, \ \mathcal F, \ \mathcal W) = (\mathcal{PGF}_n, \ \mathcal{P}_n^{\perp}, \ \mathcal {PGF}^{\perp})$.
Then $\mathcal W = \mathcal {PGF}^{\perp}$ is thick (cf. Lemma \ref{ctpPGF}). By  Theorem \ref{mainthm1} and Lemma~\ref{ctp7.4.6},
 $(\mathcal C\cap \mathcal W, \ \mathcal F) = (\mathcal{P}_n, \ \mathcal{P}_n^{\perp})$ is a complete and hereditary
cotorsion pair;  and $(\mathcal C, \ \mathcal F\cap \mathcal W) = (\mathcal{PGF}_n, \ \mathcal{P}_n^{\perp}\cap\mathcal {PGF}^{\perp})$ is a complete and hereditary
cotorsion pair. By definition $(\mathcal{PGF}_n, \ \mathcal{P}_n^{\perp}, \ \mathcal {PGF}^{\perp})$ is a hereditary Hovey triple in $R\mbox{-}{\rm Mod}$.

\vskip 5pt

By Theorem \ref{hoherhov}, $\mathcal C\cap \mathcal F = \mathcal{PGF}_n\cap \mathcal{P}_n^{\perp}$
is a Frobenius category, and $\mathcal C\cap \mathcal F\cap \mathcal W = \mathcal P_n\cap \mathcal P_n^{\perp}$ is its class of projective-injective objects$;$ and the homotopy category
is the stable category $(\mathcal{PGF}_n\cap \mathcal{P}_n^{\perp})/(\mathcal  P_n\cap \mathcal{P}_n^{\perp})$. Since
$(\mathcal{PGF}, \ R\mbox{-}{\rm Mod}, \ \mathcal {PGF}^{\perp})$ is also a hereditary Hovey triple in $R\mbox{-}{\rm Mod}$, it follows  from  Corollary \ref{wdeterminetriho} that there is a triangle equivalence
$(\mathcal{PGF}_n\cap \mathcal{P}_n^{\perp})/(\mathcal  P_n\cap \mathcal{P}_n^{\perp})\cong \mathcal{PGF}/\mathcal P.$

\vskip 5pt

(3) \ The class of fibrant objects of this model structure is \ $\mathcal{P}_n^{\perp}$. By definition this model structure is projective if and only if
$\mathcal{P}_n^{\perp} = R\mbox{-}{\rm Mod}.$ This is equivalent to say
$\mathcal{P}_n = \ ^\perp(\mathcal{P}_n^{\perp}) = \ ^\perp (R\mbox{-}{\rm Mod}) = \mathcal P$, and this is equivalent to $\mathcal{P}^{<\infty} = \mathcal{P}$, if $n\ge 1$.
\end{proof}

\begin{rem} The completeness of the cotorsion pair in Corollary \ref{mainthm2} can also be proved by the definition. See the proof of Theorem \ref{mainthm4}. \end{rem}

\section{\bf Abelian model structures arising from $\mathcal {GP}_n$}

The following nice result is due to A. Beligiannis and I. Reiten \cite{BR}.

\begin{lem} \label{ctpGP} \ {\rm (\cite[X, Theorem 2.4(iv)]{BR})} \ Let $A$ be an Artin algebra. Then \ $(\mathcal{GP}, \ \mathcal {GP}^{\perp})$ is a complete and hereditary cotorsion pair in $A$\mbox{-}{\rm Mod}.
Moreover,  $\mathcal {GP}^{\perp}$ is thick in $A$\mbox{-}{\rm Mod} $($this does not need the assumption of Artin algebra$)$. \end{lem}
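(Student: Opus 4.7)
The statement has two independent parts: (A) thickness of $\mathcal{GP}^\perp$ over an arbitrary ring $R$, and (B) $(\mathcal{GP}, \mathcal{GP}^\perp)$ is complete and hereditary over an Artin algebra $A$.

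For (A), the pivotal observation is that $\mathcal{GP}$ is closed under syzygies: if $G\in\mathcal{GP}$ and $0\to\Omega G\to P\to G\to 0$ is exact with $P$ projective, then $\Omega G\in\mathcal{GP}$ straight from the definition via complete projective resolutions. Dimension shifting therefore gives $\Ext^i_R(G,X)\cong \Ext^1_R(\Omega^{i-1}G,X)=0$ for all $i\ge 1$, $G\in\mathcal{GP}$ and $X\in\mathcal{GP}^\perp$. Applied to the long exact sequence of $\Ext_R^\bullet(G,-)$ attached to any short exact sequence $0\to A\to B\to C\to 0$ with two of $A,B,C$ lying in $\mathcal{GP}^\perp$, this forces the third term into $\mathcal{GP}^\perp$ as well. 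Since $\mathcal{GP}^\perp$ is automatically closed under direct summands, it is thick.

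For (B), the hereditary part is immediate from (A): one has $\Ext^2_R(\mathcal{GP},\mathcal{GP}^\perp)=0$, and by the criterion recalled in the Preliminaries this is equivalent to heredity. The identity $\mathcal{GP}={}^\perp(\mathcal{GP}^\perp)$ then splits as follows: the inclusion $\subseteq$ is tautological, and the reverse inclusion is a consequence of completeness, because a special $\mathcal{GP}$-precover $0\to X\to G\to M\to 0$ of any $M\in{}^\perp(\mathcal{GP}^\perp)$ must split, exhibiting $M$ as a summand of $G\in\mathcal{GP}$, hence itself in $\mathcal{GP}$.

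The crux is therefore completeness. My plan is to show that the pair is cogenerated by a set and then invoke Eklof--Trlifaj, which is available in $A\mbox{-}{\rm Mod}$ as recalled in the Preliminaries. Since $A$ is an Artin algebra, $A\mbox{-}{\rm mod}$ is skeletally small, so a representative set $\mathcal{S}$ of iso-classes of finitely generated Gorenstein projective $A$-modules exists. The target identity is $\mathcal{S}^\perp=\mathcal{GP}^\perp$, whose nontrivial direction requires that vanishing of $\Ext^1_A(S,X)$ for every $S\in\mathcal{S}$ forces vanishing of $\Ext^1_A(G,X)$ for every possibly infinitely generated $G\in\mathcal{GP}$. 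To obtain this, the plan is to exhibit every $G\in\mathcal{GP}$ as a direct summand of a transfinite extension, in the sense of Eklof, of members of $\mathcal{S}\cup\mathcal{P}$; Eklof's lemma then propagates $\Ext^1(-,X)$-vanishing from the filtration factors to $G$. Producing such a filtration is the main obstacle and is precisely where the Artin algebra hypothesis is genuinely used: it forces finitely generated Gorenstein projectives to be closed under the relevant approximation operations and makes available the Auslander--Buchweitz style machinery that presents an arbitrary Gorenstein projective module as a suitable transfinite extension of finitely generated ones. Once this filtration is in hand, Eklof--Trlifaj delivers the required approximations simultaneously on both sides, completing the proof.
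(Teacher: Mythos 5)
Your thickness argument is a valid alternative route. The paper totalizes: for $G$ a syzygy of a complete projective resolution $P^\bullet$, it applies $\Hom_A(P^\bullet,-)$ to the short exact sequence and uses that acyclicity of complexes of abelian groups satisfies two-out-of-three. Your version does the same thing degree by degree, via dimension shifting and the long exact sequence. One case is, however, not covered as you stated it: if $0\to X\to Y\to Z\to 0$ is exact with $Y,Z\in\mathcal{GP}^\perp$, the relevant segment $\Hom_A(G,Z)\to\Ext^1_A(G,X)\to\Ext^1_A(G,Y)=0$ does not force $\Ext^1_A(G,X)=0$. You must also use that every $G\in\mathcal{GP}$ \emph{is} a first syzygy $0\to G\to P\to G''\to 0$ with $G''\in\mathcal{GP}$, so that $\Ext^1_A(G,X)\cong\Ext^2_A(G'',X)$, and the latter dies by the segment $\Ext^1_A(G'',Z)\to\Ext^2_A(G'',X)\to\Ext^2_A(G'',Y)$. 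Your stated closure ``$\mathcal{GP}$ is closed under syzygies'' supplies the opposite implication; what this case needs is the dual fact, equally available from the complete resolution, and it should be said explicitly.

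The genuine gap is in completeness, which is the crux of the statement. You propose to show $(\mathcal{GP},\mathcal{GP}^\perp)$ is cogenerated by a set of finitely generated Gorenstein projectives and invoke Eklof--Trlifaj, which requires presenting every $G\in\mathcal{GP}$ as a direct summand of a module filtered by those generators and projectives. You flag this filtration as the main obstacle and then assert, without argument, that Auslander--Buchweitz machinery over an Artin algebra produces it. That is not known. The paper itself records, just before Theorem \ref{mainthm4}, that it is open whether $(\mathcal{GP},\mathcal{GP}^\perp)$ is cogenerated by a set, and this is exactly why the proof of Theorem \ref{mainthm4} cannot mimic the set-cogeneration argument of Corollary \ref{mainthm2}. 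Beligiannis--Reiten's argument in \cite{BR} does not go via set-cogeneration: it exploits that $\mathcal{GP}$ restricted to finitely generated modules is contravariantly finite over an Artin algebra and then lifts approximations to all of $A\mbox{-}{\rm Mod}$ by torsion-theoretic and pure-injectivity techniques. Auslander--Buchweitz theory gives special precovers inside the finitely generated category; it does not by itself yield transfinite filtrations of an arbitrary (possibly infinitely generated) Gorenstein projective by finitely generated ones. The paper's own proof of this lemma simply cites \cite{BR} for the complete hereditary cotorsion pair and only supplements the thickness argument; the honest move is to do the same rather than to re-derive completeness along a route whose central step is an open problem.
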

\begin{proof} \ We include a proof for the thickness, for it is not mentioned in \cite[X, Theorem 2.4(iv)]{BR}.
Let $0\longrightarrow X\longrightarrow Y\longrightarrow Z\longrightarrow 0$ be an exact sequence, and $G\in \mathcal{GP}$.
Then $G$ is a syzygy of a complete projective resolution $P^\bullet$. By exact sequence of complexes
\ $0\longrightarrow {\rm Hom}_A(P^\bullet, X)\longrightarrow {\rm Hom}_A(P^\bullet, Y)\longrightarrow {\rm Hom}_A(P^\bullet, Z)\longrightarrow 0,$
one sees that any two of ${\rm Hom}_A(P^\bullet, X), \ {\rm Hom}_A(P^\bullet, Y)$, \ ${\rm Hom}_A(P^\bullet, Z)$ are exact, so is the third one.
From this the thickness of $\mathcal {GP}^{\perp}$ follows. \end{proof}

\vskip5pt

Auslander - Buchweitz Theorem is fundamental in Gorenstein homological algebra.
See \cite[Theorem 1.1]{AB}, or more explicitly in \cite[Theorem 2.10]{Ho}.

\begin{lem} \label{basic} \ Let $R$ be a ring, and $M$ an $R$-module with ${\rm Gpd} M = n$. Then
there is an exact sequence $0\longrightarrow K\longrightarrow G\stackrel \phi\longrightarrow M\longrightarrow 0$ with $G\in \mathcal {GP}$ and
${\rm pd} K = n-1$. $($If $n = 0$ then $K = 0.)$   In particular, $\phi$ is a right $\mathcal {GP}$-approximation of $M$.
\end{lem}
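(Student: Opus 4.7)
The plan is to prove the lemma by induction on $n = \operatorname{Gpd} M$, following the classical Auslander--Buchweitz template. The base case $n = 0$ is immediate: $M \in \mathcal{GP}$ itself, so I simply take $K = 0$, $G = M$, and $\phi = \mathrm{id}_M$.

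For the inductive step with $n \ge 1$, I would first choose a short exact sequence $0 \to N \to P \to M \to 0$ with $P$ projective; the standard syzygy behaviour of Gorenstein projective dimension gives $\operatorname{Gpd} N = n - 1$, so the induction hypothesis furnishes $0 \to K' \to G' \to N \to 0$ with $G' \in \mathcal{GP}$ and $\operatorname{pd} K' = n - 2$ (with the convention $K' = 0$ when $n = 1$). Since $G'$ is Gorenstein projective, by definition it embeds into a projective module, yielding $0 \to G' \to Q \to G'' \to 0$ with $Q$ projective and $G'' \in \mathcal{GP}$.

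The core of the argument is a two-step pushout construction. First I would push out the inclusion $G' \hookrightarrow Q$ along the surjection $G' \twoheadrightarrow N$ to produce a module $E$ fitting into two short exact sequences
\[
0 \to K' \to Q \to E \to 0 \quad \text{and} \quad 0 \to N \to E \to G'' \to 0;
\]
the first of these, extending $K'$ of projective dimension $\le n-2$ by a projective, forces $\operatorname{pd} E \le n-1$. Then I would push out $N \hookrightarrow E$ along the original inclusion $N \hookrightarrow P$ to obtain $G$ together with
\[
0 \to P \to G \to G'' \to 0 \quad \text{and} \quad 0 \to E \to G \to M \to 0;
\]
the first places $G$ in $\mathcal{GP}$ (extension of a Gorenstein projective by a projective, and $\mathcal{GP}$ is extension-closed), and the second is the desired sequence with $K := E$. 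That $\operatorname{pd} K$ equals $n-1$ rather than being strictly less follows from the minimality of $\operatorname{Gpd} M$: a strict inequality $\operatorname{pd} E < n-1$ would combine with $G \in \mathcal{GP}$ in the last sequence to force $\operatorname{Gpd} M \le n - 1$, contradicting the hypothesis.

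For the right $\mathcal{GP}$-approximation claim, I would take any $f \colon G_1 \to M$ with $G_1 \in \mathcal{GP}$ and form the pullback of $\phi$ along $f$, producing an exact sequence $0 \to K \to E_1 \to G_1 \to 0$. Since $\operatorname{pd} K < \infty$ and $G_1 \in \mathcal{GP}$, dimension shifting along a finite projective resolution of $K$ reduces $\operatorname{Ext}_R^1(G_1, K)$ to $\operatorname{Ext}^{\ell}_R(G_1, P)$ for some projective $P$, which vanishes by definition of Gorenstein projectivity; hence the pullback sequence splits, and composing a section $G_1 \to E_1$ with $E_1 \to G$ delivers the required lift of $f$ through $\phi$. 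The main obstacle in the plan is the bookkeeping of the two pushout diagrams, in particular verifying the four advertised short exact sequences and pinning down the exact value $\operatorname{pd} K = n-1$; both amount to routine diagram-chasing once the setup is in place.
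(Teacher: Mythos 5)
The paper does not prove this lemma; it cites it as the Auslander--Buchweitz theorem, referring to \cite[Theorem 1.1]{AB} and \cite[Theorem 2.10]{Ho}. Your induction-with-two-pushouts argument is exactly the classical Auslander--Buchweitz construction from those sources, and every step checks out: the first pushout correctly yields $0\to K'\to Q\to E\to 0$ and $0\to N\to E\to G''\to 0$ (so $\operatorname{pd}E\le n-1$), the second yields $0\to P\to G\to G''\to 0$ (so $G\in\mathcal{GP}$ by extension-closure) and $0\to E\to G\to M\to 0$; the exactness of $\operatorname{pd}K=n-1$ follows from splicing a finite projective resolution of $K$ onto the constructed sequence, and the approximation property follows from $\operatorname{Ext}^1_R(G_1,K)=0$ via dimension shifting as you describe.
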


\vskip5pt

The following is another theorem of \v{S}aroch - \v{S}\'{t}ov\'{i}\v{c}ek type (compare Theorem \ref{mainthm1} and \cite[Theorem 4.11]{SS2}).

\begin{thm} \label{mainthm3} \ Let $A$ be a ring, $n$ a non-negative integer,  and $M$ an $A$-module. Then \ $\mathcal {GP}^{\perp}  \cap\mathcal{GP}_n = \mathcal{P}_n;$ and one has the implications ${\rm (1)} \Longleftrightarrow {\rm (2)}\Longleftrightarrow {\rm (4)} \Longrightarrow {\rm (3)}$, where

\vskip5pt

{\rm (1)} \ $M\in \mathcal{GP}_n.$

\vskip5pt

{\rm (2)} \ There is an exact sequence
$0 \longrightarrow K \longrightarrow G \longrightarrow M \longrightarrow 0$
which is again exact after applying $\operatorname{Hom}_A(-, X)$ for $X\in \mathcal P_n^\perp$, where $G \in \mathcal{GP}$ and $K \in \mathcal{P}_{n-1}$. $($If $n = 0$, then $K = 0.)$

\vskip5pt

{\rm (3)} \ $\operatorname{Ext}_A^1(M, C)=0$ for every module $C \in \mathcal{P}_n^{\perp}\cap \mathcal {GP}^{\perp}.$

\vskip5pt

{\rm (4)} \ There is an exact sequence \ $0\longrightarrow M \longrightarrow L\longrightarrow  N\longrightarrow 0$ with $L \in \mathcal{P}_n$ and $N \in \mathcal{GP}$.

\vskip5pt

Moreover, if $A$ is an Artin algebra, then  ${\rm (3)} \Longrightarrow {\rm (4)}$,
and hence all the statements ${\rm (1)}, \ {\rm (2)}, \ {\rm (3)}, \ {\rm (4)}$ are equivalent.

\end{thm}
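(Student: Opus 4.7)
The plan is to mirror the proof of Theorem \ref{mainthm1} as closely as possible, since the statement is the Gorenstein projective analogue in which the role of the PGF cotorsion pair is played by $(\mathcal{GP},\mathcal{GP}^\perp)$, the latter only available for Artin algebras and hence needed only in the final implication. The main inputs are the Auslander--Buchweitz approximation (Lemma \ref{basic}), the standard subadditivity of $\mathrm{Gpd}$ on short exact sequences, and, for the hard direction, both the completeness and thickness statements of Lemma \ref{ctpGP} combined with the cotorsion pair $(\mathcal{P}_n,\mathcal{P}_n^\perp)$ of Lemma \ref{ctp7.4.6}.

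For $(1)\Rightarrow(2)$, given $M\in\mathcal{GP}_n$, Lemma \ref{basic} produces an exact sequence $0\to K\to G\to M\to 0$ with $G\in\mathcal{GP}$ and $K\in\mathcal{P}_{n-1}$. Since $G\in\mathcal{GP}$, embed it into a projective $P$ with cokernel $L\in\mathcal{GP}$, and form the pushout along $G\to M$; the middle row becomes $0\to K\to P\to H\to 0$ with $H\in\mathcal{P}_n$, and the right column becomes $0\to M\to H\to L\to 0$. For any $X\in\mathcal{P}_n^\perp$ one has $\Ext^1_A(H,X)=0$, forcing $\Hom_A(P,X)\to\Hom_A(K,X)$ to be surjective; restricting through $G\hookrightarrow P$ then yields the surjectivity of $\Hom_A(G,X)\to\Hom_A(K,X)$, which is $(2)$. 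Observe that the right column of the same diagram is precisely a witness for $(4)$, so $(2)\Rightarrow(4)$ comes for free from this construction.

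For $(2)\Rightarrow(3)$, take $C\in\mathcal{P}_n^\perp\cap\mathcal{GP}^\perp$; the long exact sequence attached to $0\to K\to G\to M\to 0$, together with $\Ext^1_A(G,C)=0$ (coming from $G\in\mathcal{GP}$ and $C\in\mathcal{GP}^\perp$) and the surjectivity of $\Hom_A(G,C)\to\Hom_A(K,C)$ given by $(2)$, yields $\Ext^1_A(M,C)=0$. The implication $(4)\Rightarrow(1)$ is immediate from subadditivity of $\mathrm{Gpd}$: $L\in\mathcal{P}_n\subseteq\mathcal{GP}_n$ and $N\in\mathcal{GP}\subseteq\mathcal{GP}_n$, whence $M\in\mathcal{GP}_n$.

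The main obstacle is $(3)\Rightarrow(4)$, which is where the Artin-algebra hypothesis enters. The plan is to first apply Lemma \ref{ctpGP} to obtain $0\to M\to L\to N\to 0$ with $N\in\mathcal{GP}$ and $L\in\mathcal{GP}^\perp$; since $\Ext^1_A(N,C)=0$ for every $C\in\mathcal{GP}^\perp$, the hypothesis $(3)$ combined with the long exact sequence forces $\Ext^1_A(L,C)=0$ for every $C\in\mathcal{P}_n^\perp\cap\mathcal{GP}^\perp$. Next apply Lemma \ref{ctp7.4.6} to produce $0\to\Ker\pi\to L'\stackrel{\pi}{\longrightarrow}L\to 0$ with $L'\in\mathcal{P}_n$ and $\Ker\pi\in\mathcal{P}_n^\perp$; since $L'\in\mathcal{P}_n\subseteq\mathcal{GP}^\perp$ and $L\in\mathcal{GP}^\perp$, thickness (Lemma \ref{ctpGP}) places $\Ker\pi$ in $\mathcal{P}_n^\perp\cap\mathcal{GP}^\perp$, so $\Ext^1_A(L,\Ker\pi)=0$, $\pi$ splits, and $L\in\mathcal{P}_n$. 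Finally, for the kernel equation $\mathcal{GP}^\perp\cap\mathcal{GP}_n=\mathcal{P}_n$ (valid over any ring), the inclusion $\mathcal{P}_n\subseteq\mathcal{GP}^\perp\cap\mathcal{GP}_n$ is a routine dimension shift using $\Ext^i_A(G,P)=0$ for $G\in\mathcal{GP}$ and projective $P$, while the reverse inclusion uses the ring-independent chain $(1)\Rightarrow(2)\Rightarrow(4)$ and then splits the resulting sequence $0\to M\to L\to N\to 0$ by $\Ext^1_A(N,M)=0$.
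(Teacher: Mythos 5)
Your proof is correct and follows essentially the same route as the paper: the Auslander--Buchweitz approximation plus a pushout along a projective embedding of $G$ drives the cycle $(1)\Rightarrow(2)\Rightarrow(4)\Rightarrow(1)$ and $(2)\Rightarrow(3)$, and the Artin-algebra implication $(3)\Rightarrow(4)$ uses completeness of $(\mathcal{GP},\mathcal{GP}^\perp)$, the cotorsion pair $(\mathcal P_n,\mathcal P_n^\perp)$, and thickness of $\mathcal{GP}^\perp$ exactly as the paper does. The one small deviation is your verification of $\mathcal{GP}^\perp\cap\mathcal{GP}_n=\mathcal P_n$: you go through $(4)$ and split the left $\mathcal P_n$-preenvelope via $\Ext^1_A(N,M)=0$, mirroring the PGF case (Theorem~\ref{mainthm1}), whereas the paper instead takes the right $\mathcal{GP}$-approximation $0\to K\to G\to M\to 0$, uses closure of $\mathcal{GP}^\perp$ under extensions to conclude $G\in\mathcal{GP}\cap\mathcal{GP}^\perp=\mathcal P$, and then reads off $M\in\mathcal P_n$; both arguments are equally short and correct.
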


\begin{proof} \ It is clear that
$\mathcal{P}_n\subseteq \mathcal {GP}^{\perp}  \cap\mathcal{GP}_n$.
Conversely, if $M \in \mathcal {GP}^{\perp}  \cap\mathcal{GP}_n$, then by Lemma \ref{basic} there is an exact sequence $0\longrightarrow K\longrightarrow G \longrightarrow M\longrightarrow 0$ with $G\in \mathcal {GP}$ and
$K\in \mathcal P_{n-1}$.  Since $K\in \mathcal P_{n-1}\subseteq \mathcal {GP}^{\perp}$ and $M \in \mathcal {GP}^{\perp}$,
$G\in \mathcal {GP}^{\perp}$, and hence $G\in \mathcal {GP}\cap \mathcal {GP}^{\perp} = \mathcal P.$
Thus $M \in \mathcal{P}_n$.

\vskip5pt

$(1)\Longrightarrow (2)$:  \ Assume that $M\in \mathcal{GP}_n.$ The case for $n=0$ is trivial. Assume that $n \geq 1$. By Lemma \ref{basic} there is an exact sequence $0\longrightarrow K \longrightarrow G \longrightarrow M \longrightarrow 0$
with $G \in \mathcal{GP}$ and $K\in \mathcal{P}_{n-1}$. We show that this sequence remains exact after applying $\operatorname{Hom}_A(-, X)$ for $X\in \mathcal P_n^\perp$. Since $G$ is Gorenstein projective, by definition  there is
an exact sequence $0\longrightarrow G \longrightarrow P \longrightarrow G^{\prime} \longrightarrow 0$ with $P\in\mathcal P$  and $G^\prime \in \mathcal{GP}$.
Take the pushout

\[
\xymatrix@C=30pt@R=0.6cm{
  & & 0\ar[d]  & 0\ar[d] \\
0\ar[r] & K\ar[r]\ar@{=}[d]  & G\ar[r]\ar[d]  &  M\ar[r]\ar@{.>}[d] & 0\\
0\ar[r] & K \ar[r]  & P\ar@{.>}[r]\ar[d]  & H\ar[r]\ar[d] & 0\\
& &G^\prime \ar[d]\ar@{=}[r] & G^\prime\ar[d]\\
& & 0 &0 }
\]
Then $H \in \mathcal{P}_n$. For any module $X\in \mathcal P_n^\perp$, in the following commutative diagram with exact rows

\[
\xymatrix@C=23pt{
0\ar[r] & \operatorname{Hom}_A(H, X)\ar[r]\ar[d]  &  \operatorname{Hom}_A(P, X)\ar[r]\ar[d]  &   \operatorname{Hom}_A(K, X) \ar[r]\ar@{=}[d] &  \operatorname{Ext}_A^{1}(H, X)\\
0\ar[r] & \operatorname{Hom}_A(M, X)\ar[r]  &  \operatorname{Hom}_A(G, X)\ar[r]  &   \operatorname{Hom}_A(K, X)}
\]
one has $\operatorname{Ext}_A^{1}(H, X) = 0$. It follows that the map
$\operatorname{Hom}_A(G, X)\longrightarrow \operatorname{Hom}_A(K, X)$ is surjective.

\vskip 5pt

$(2)\Longrightarrow (4)$: \ The case when $n=0$ is trivial. If $n\geq 1$, by assumption there is an exact sequence
$0 \longrightarrow K \longrightarrow G \longrightarrow M \longrightarrow 0$ where $G \in \mathcal{GP}$ and $K \in \mathcal{P}_{n-1}$. Also there is an exact sequence $0 \longrightarrow G \longrightarrow P \longrightarrow N \longrightarrow 0$ where $P \in \mathcal{P}$ and $N \in \mathcal{GP}$. Take the pushout
\[
\xymatrix@C=30pt@R=0.6cm{
  & & 0\ar[d]  & 0\ar[d] \\
0\ar[r] & K\ar[r]\ar@{=}[d]  & G\ar[r]\ar[d]  &  M\ar[r]\ar@{.>}[d] & 0\\
0\ar[r] & K \ar[r]  & P\ar@{.>}[r]\ar[d]  & L\ar[r]\ar[d] & 0\\
& &N \ar[d]\ar@{=}[r] & N\ar[d]\\
& & 0 &0 }
\]
Since $K \in \mathcal{P}_{n-1}$ and $P\in \mathcal P$, $L \in \mathcal{P}_n$. We are done.

\vskip 5pt

$(4) \Longrightarrow (1)$:  Assume that \ $0\longrightarrow M \longrightarrow L\longrightarrow  N\longrightarrow 0$ is an exact sequence with $L \in \mathcal{P}_n$ and $N \in \mathcal{GP}$. Since $L \in \mathcal{P}_n\subseteq \mathcal{G P}_n$ and $N \in \mathcal{GP}\subseteq \mathcal{G P}_n$, it follows from  \cite[Theorem 2.20]{Ho} that $M\in \mathcal{G P}_n$.

\vskip 5pt

$(2)\Longrightarrow (3):$  \ Assume that
$0 \longrightarrow K \stackrel f \longrightarrow G \longrightarrow M \longrightarrow 0$ with $G \in \mathcal{GP}$ and $K \in \mathcal{P}_n$ is an exact sequence
which remains exact after applying  $\operatorname{Hom}_A(-, X)$ for $X\in \mathcal P_n^\perp$.
For any module $C \in \mathcal{P}_n^{\perp}\cap \mathcal {GP}^{\perp}$ one has $\Ext^1_A(G, C) = 0$.  Thus one has an exact sequence
$$\Hom_A(G, C)\stackrel {(f, C)} \longrightarrow \Hom_A(K, C) \longrightarrow {\rm Ext}_A^1(M, C)\longrightarrow 0.$$
But by the assumption the map  $(f, C)$ is surjective, it follows that ${\rm Ext}_A^1(M, C) = 0.$

\vskip 5pt

$(3)\Longrightarrow (4)$: \ For this, assume that $A$ is an Artin algebra. Assume that $\operatorname{Ext}_A^1(M, C)=0$ for every module $C \in \mathcal{P}_n^{\perp}\cap \mathcal {GP}^{\perp}.$
By Lemma~\ref{ctpGP} (This is the place we need the assumption that $A$ is an Artin algebra.),  one gets an exact sequence $0\longrightarrow M \longrightarrow L\longrightarrow N\longrightarrow 0$ with $L\in \mathcal {GP}^{\perp}$ and $N\in \mathcal{GP}$.
Since $\operatorname{Ext}_A^1(N, C)=0$ for  $C \in \mathcal{P}_n^{\perp}\cap \mathcal {GP}^{\perp},$
it follows that ${\rm Ext}^1_A(L, C)=0$ for  $C\in \mathcal{P}_n^{\perp}\cap \mathcal {GP}^{\perp}$.

\vskip 5pt

By Lemma~\ref{ctp7.4.6} one gets an exact sequence
$0\longrightarrow \Ker \pi \longrightarrow L^{\prime}\stackrel \pi\longrightarrow L\longrightarrow 0$ with
$L^{\prime}\in \mathcal P_n$ and ${\rm Ker}\pi\in \mathcal P_n^\perp$. Since both $L$ and $L^{\prime}$ are in $\mathcal{GP}^{\perp}$, and since $\mathcal{GP}^{\perp}$ (cf. Lemma \ref{ctpGP})
is thick in $A$-Mod, it follows that
${\rm Ker}\pi\in \mathcal {GP}^{\perp}$, and hence ${\rm Ker}\pi\in \mathcal{P}_n^{\perp}\cap \mathcal {GP}^{\perp}$. Therefore
${\rm Ext}^1_A(L, {\rm Ker}\pi)=0$, and then $\pi$ splits and $L\in \mathcal{P}_n$.
\end{proof}

Since we do not know whether  the cotorsion pair \ $(\mathcal{GP}, \ \mathcal {GP}^{\perp})$ is cogenerated by a set, the proof of the following theorem is in fact
not the same as Corollary \ref{mainthm2}.

\begin{thm} \label{mainthm4} \ Let $A$ be an Artin algebra and $n$ a non-negative integer. Then

\vskip5pt

{\rm (1)} \ The pair \ $(\mathcal{G P}_n, \  \mathcal{P}_n^{\perp} \cap \mathcal {GP}^{\perp})$ is a complete and hereditary cotorsion pair in $A\mbox{-}{\rm Mod}$ with kernel
$\mathcal{P}_n\cap\mathcal{P}_n^{\perp}$.

\vskip 5pt

{\rm (2)}  \ The triple \ $(\mathcal{G P}_n, \ \mathcal{P}_n^{\perp}, \ \mathcal {GP}^{\perp})$ is a hereditary Hovey triple in $A\mbox{-}{\rm Mod};$
$\mathcal{G P}_n\cap\mathcal{P}_n^{\perp}$ is a Frobenius category such that $\mathcal  P_n\cap \mathcal{P}_n^{\perp}$ is its class of projective-injective objects$;$ and the homotopy category
is the stable category $(\mathcal{GP}_n\cap \mathcal{P}_n^{\perp})/(\mathcal  P_n\cap \mathcal{P}_n^{\perp})$, which is triangle equivalent to \ $\mathcal{GP}/\mathcal P.$
\end{thm}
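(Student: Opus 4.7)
The plan is to follow the same broad pattern as Corollary \ref{mainthm2}, but one must circumvent the fact that $(\mathcal{GP},\mathcal{GP}^\perp)$ is not known to be cogenerated by a set over an Artin algebra, so completeness cannot be imported from the Eklof--Trlifaj theorem. Instead I will prove completeness by directly constructing the two approximation sequences from those of the two known complete cotorsion pairs $(\mathcal{GP},\mathcal{GP}^\perp)$ (Lemma \ref{ctpGP}) and $(\mathcal{P}_n,\mathcal{P}_n^\perp)$ (Lemma \ref{ctp7.4.6}), using pushouts together with the thickness of $\mathcal{GP}^\perp$.

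For part (1), I first note that the equivalence $(1)\Longleftrightarrow(3)$ of Theorem \ref{mainthm3} gives $^{\perp}(\mathcal{P}_n^\perp\cap\mathcal{GP}^\perp)=\mathcal{GP}_n$, while the inclusions $\mathcal{P}_n\cup\mathcal{GP}\subseteq\mathcal{GP}_n$ yield $\mathcal{GP}_n^\perp\subseteq\mathcal{P}_n^\perp\cap\mathcal{GP}^\perp$; so $(\mathcal{GP}_n,\ \mathcal{P}_n^\perp\cap\mathcal{GP}^\perp)$ is a cotorsion pair. Heredity is immediate from the standard closure of $\mathcal{GP}_n$ under kernels of epimorphisms. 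The kernel identity $\mathcal{GP}_n\cap\mathcal{P}_n^\perp\cap\mathcal{GP}^\perp=\mathcal{P}_n\cap\mathcal{P}_n^\perp$ follows from the equality $\mathcal{GP}^\perp\cap\mathcal{GP}_n=\mathcal{P}_n$ established in Theorem \ref{mainthm3}.

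For completeness, fix $M\in A\mbox{-}{\rm Mod}$. To build a special $\mathcal{GP}_n$-precover, I apply $(\mathcal{GP},\mathcal{GP}^\perp)$ to get an exact sequence $0\to Y\to X\to M\to 0$ with $X\in\mathcal{GP}$ and $Y\in\mathcal{GP}^\perp$; then I apply the special $\mathcal{P}_n^\perp$-preenvelope of $(\mathcal{P}_n,\mathcal{P}_n^\perp)$ to $Y$ to get $0\to Y\to F\to Q\to 0$ with $F\in\mathcal{P}_n^\perp$ and $Q\in\mathcal{P}_n$. Since $Q\in\mathcal{P}_n\subseteq\mathcal{GP}^\perp$ and $Y\in\mathcal{GP}^\perp$, thickness of $\mathcal{GP}^\perp$ (Lemma \ref{ctpGP}) forces $F\in\mathcal{P}_n^\perp\cap\mathcal{GP}^\perp$. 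The pushout of $Y\to X$ along $Y\to F$ then produces the desired sequence $0\to F\to C\to M\to 0$ together with $0\to X\to C\to Q\to 0$; the latter exhibits $C$ as an extension of $Q\in\mathcal{P}_n$ by $X\in\mathcal{GP}$, so $C\in\mathcal{GP}_n$ by extension closure. The special preenvelope is obtained symmetrically: I take $0\to M\to Z\to G\to 0$ from $(\mathcal{GP},\mathcal{GP}^\perp)$, take the special $\mathcal{P}_n^\perp$-preenvelope $0\to Z\to F'\to K\to 0$ (so $F'\in\mathcal{P}_n^\perp\cap\mathcal{GP}^\perp$ by thickness again), and set $C':=F'/M$, observing that $0\to G\to C'\to K\to 0$ is exact, hence $C'\in\mathcal{GP}_n$.

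Part (2) is then a formal assembly. The class $\mathcal{W}:=\mathcal{GP}^\perp$ is thick by Lemma \ref{ctpGP}; the cotorsion pair $(\mathcal{GP}_n\cap\mathcal{W},\ \mathcal{P}_n^\perp)=(\mathcal{P}_n,\mathcal{P}_n^\perp)$ is complete and hereditary by Lemma \ref{ctp7.4.6}, and $(\mathcal{GP}_n,\ \mathcal{P}_n^\perp\cap\mathcal{W})$ is complete and hereditary by part (1), so $(\mathcal{GP}_n,\mathcal{P}_n^\perp,\mathcal{GP}^\perp)$ is a hereditary Hovey triple. Theorem \ref{hoherhov} identifies $\mathcal{GP}_n\cap\mathcal{P}_n^\perp$ as a Frobenius category with projective-injectives $\mathcal{P}_n\cap\mathcal{P}_n^\perp$ and the homotopy category as the stable category $(\mathcal{GP}_n\cap\mathcal{P}_n^\perp)/(\mathcal{P}_n\cap\mathcal{P}_n^\perp)$. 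Since $(\mathcal{GP},A\mbox{-}{\rm Mod},\mathcal{GP}^\perp)$ of Lemma \ref{ctpGP} is also a hereditary Hovey triple with the same class of trivial objects, Corollary \ref{wdeterminetriho} delivers the triangle equivalence with $\mathcal{GP}/\mathcal{P}$. The principal obstacle throughout is the completeness in part (1): the set-cogeneration shortcut used in Corollary \ref{mainthm2} is unavailable, and the pushout construction, with thickness of $\mathcal{GP}^\perp$ doing the work that cogeneration-by-a-set would otherwise do, is essential.
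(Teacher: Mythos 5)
Your proof is correct and essentially reproduces the paper's argument: the same pushout construction of the special $\mathcal{GP}_n$-precover from the completeness of $(\mathcal{GP},\mathcal{GP}^\perp)$ and $(\mathcal{P}_n,\mathcal{P}_n^\perp)$, the same kernel computation from Theorem~\ref{mainthm3}, and the same assembly of part (2) via Theorem~\ref{hoherhov} and Corollary~\ref{wdeterminetriho}. The only cosmetic differences are that you also build the preenvelope explicitly (the paper omits this, since for an abelian category with enough projectives and injectives the two halves of completeness are equivalent, as noted in Section~1.2), and you cite thickness of $\mathcal{GP}^\perp$ where mere closure under extensions suffices.
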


\begin{proof} \ (1) \ Put \  $\mathcal{A}=\mathcal{G P}_n$ and $\mathcal{B} =\mathcal{P}_n^{\perp} \cap \mathcal {GP}^{\perp}$. By Theorem \ref{mainthm3}(3) one has $\mathcal{A}={ }^{\perp} \mathcal{B}$ and $\mathcal B\subseteq \mathcal A^\perp$. On the other hand, one has \ $\mathcal{P}_n\cup \mathcal{GP}\subseteq \mathcal{GP}_n\cup \mathcal{GP} = \mathcal{G P}_n.$ Thus
$$\mathcal{A}^{\perp}= \mathcal{GP}_n^{\perp} \subseteq (\mathcal{P}_n\cup \mathcal{GP})^\perp = \mathcal{P}_n^{\perp} \cap \mathcal {GP}^{\perp} = \mathcal{B}.$$
Hence $\mathcal{B} = \mathcal{A}^{\perp}$ and $(\mathcal{A}, \mathcal{B})$ is a cotorsion pair.
It is hereditary, since $\mathcal{G P}_n$ is closed under taking kernels of epimorphisms (cf. \cite[Theorem 2.20]{Ho}); or equivalently
$\mathcal{P}_n^{\perp}$, and hence $\mathcal{P}_n^{\perp} \cap \mathcal {GP}^{\perp}$,  is closed under taking cokernels of monomorphisms.

\vskip 5pt

To prove that it is complete, it suffices to show that for $M\in A\mbox{-}{\rm Mod}$, there is an exact sequence
$$0 \longrightarrow L \longrightarrow G \longrightarrow M \longrightarrow 0$$ with $G\in \mathcal{G P}_n$ and $L\in \mathcal{P}_n^{\perp} \cap \mathcal {GP}^{\perp}$.
Since $(\mathcal{GP}, \ \mathcal {GP}^{\perp})$ is a complete cotorsion pair in $A$\mbox{-}{\rm Mod} (cf. Lemma \ref{ctpGP}),
there is an exact sequence
$$0 \longrightarrow N \longrightarrow H \longrightarrow M \longrightarrow 0$$ with $H\in \mathcal{GP}$ and $N\in \mathcal{GP}^{\perp}$.
Since $(\mathcal{P}_n, \  \mathcal{P}_n^{\perp})$ is a complete cotorsion pair in $A\mbox{-}{\rm Mod}$ (cf. Lemma \ref{ctp7.4.6}),
there is an exact sequence
$$0 \longrightarrow N \longrightarrow L \longrightarrow C \longrightarrow 0$$ with $L\in \mathcal{P}_n^{\perp}$ and $C\in  \mathcal {P}_n$.
Take the pushout

\[
\xymatrix@C=30pt@R=0.6cm{& 0\ar[d]  & 0\ar[d] \\
0\ar[r] & N\ar[r]\ar[d]  & H\ar[r]\ar@{.>}[d]  &  M\ar[r]\ar@{=}[d] & 0\\
0\ar[r] & L \ar@{.>}[r]\ar[d]  & G\ar[r]\ar[d]  & M\ar[r] & 0\\
& C \ar[d]\ar@{=}[r] & C\ar[d]\\
& 0 &0}
\]
Since $N\in \mathcal{GP}^{\perp}$ and $C\in \mathcal {P}_n\subseteq \mathcal{GP}^{\perp}$, it follows that $L\in \mathcal{GP}^{\perp}$, and hence
$L\in \mathcal{P}_n^{\perp} \cap \mathcal {GP}^{\perp}$. Since $H\in \mathcal{GP}\subseteq \mathcal{G P}_n$  and $C\in \mathcal {P}_n \subseteq \mathcal{G P}_n$, it follows that
$G\in \mathcal{G P}_n$. We are done.

\vskip 5pt

By Theorem~\ref{mainthm3}, \ $\mathcal{GP}_n  \cap\mathcal {GP}^{\perp} = \mathcal{P}_n,$  and hence the kernel of this cotorsion pair is  $\mathcal{P}_n\cap\mathcal{P}_n^{\perp}$.

\vskip 5pt

(2) \ Put $(\mathcal C, \ \mathcal F, \ \mathcal W) = (\mathcal{G P}_n, \ \mathcal{P}_n^{\perp}, \ \mathcal {GP}^{\perp})$.
Then $\mathcal W = \mathcal {GP}^{\perp}$ is thick, by Lemma~\ref{ctpGP}. Since $\mathcal{GP}_n  \cap\mathcal {GP}^{\perp} = \mathcal{P}_n,$
it follows from  Lemma~\ref{ctp7.4.6} that $(\mathcal C\cap \mathcal W, \ \mathcal F) = (\mathcal{P}_n, \ \mathcal{P}_n^{\perp})$ is a complete and hereditary
cotorsion pair in $A\mbox{-}{\rm Mod}$.  By (1), $(\mathcal C, \ \mathcal F\cap \mathcal W) = (\mathcal{GP}_n, \ \mathcal{P}_n^{\perp}\cap\mathcal {GP}^{\perp})$ is a complete and hereditary
cotorsion pair in $A\mbox{-}{\rm Mod}$. By definition $(\mathcal{G P}_n, \ \mathcal{P}_n^{\perp}, \ \mathcal {GP}^{\perp})$ is a hereditary Hovey triple in $A\mbox{-}{\rm Mod}$.
The remaining assertion follows from Theorem \ref{hoherhov} and Corollary \ref{wdeterminetriho}, using hereditary Hovey triple
$(\mathcal{G P}, \ R\mbox{-}{\rm Mod}, \ \mathcal {GP}^{\perp})$ in $R\mbox{-}{\rm Mod}$.
\end{proof}
\begin{rem} We do not need to consider the corresponding results for $\mathcal {GP}_n$ for Gorenstein ring $R$,
by using the cortorsion pair \ $(\mathcal {GP}, \ \mathcal P^{<\infty})$, since in this case one has $\mathcal {GP} = \mathcal {PGF}$ (cf. \cite[Proposition 9]{I}).
\end{rem}
\section{\bf Several finitistic dimensions}

\subsection{Equalities on finitistic dimensions} \ For any non-negative integer $n$ and any ring $R$, we have seen in Corollary \ref{mainthm2} that
$(\mathcal{PGF}_n, \  \mathcal{P}_n^{\perp} \cap \mathcal {PGF}^{\perp})$ is a complete and hereditary cotorsion pair in $R\mbox{-}{\rm Mod}$, and that
$(\mathcal{PGF}_n, \ \mathcal{P}_n^{\perp}, \ \mathcal {PGF}^{\perp})$ is a hereditary Hovey triple in $R\mbox{-}{\rm Mod}.$
Also, for Artin algebra $A$, we have seen in Theorem \ref{mainthm4} that
$(\mathcal{GP}_n, \  \mathcal{P}_n^{\perp} \cap \mathcal {GP}^{\perp})$ is a complete and hereditary cotorsion pair in $A\mbox{-}{\rm Mod}$, and that
$(\mathcal{GP}_n, \ \mathcal{P}_n^{\perp}, \ \mathcal {GP}^{\perp})$ is a hereditary Hovey triple in $A\mbox{-}{\rm Mod}.$

\vskip5pt

For any ring $R$,  Maaouy \cite[Theorem A]{M} proves that
$(\mathcal {GF}_n, \ \mathcal {F}_n^{\perp}\cap \mathcal{PGF}^{\perp})$ is a complete and hereditary cotorsion pair,
and that $(\mathcal {GF}_n, \ \mathcal {F}_n^{\perp}, \ \mathcal{PGF}^{\perp})$ is a hereditary Hovey triple, in $R$-Mod.

\vskip5pt

It seems to be natural to ask whether $\mathcal {PGF}^{<\infty}$, $\mathcal {GP}^{<\infty}$ and $\mathcal {GF}^{<\infty}$ could be the left parts of cotorsion pairs in $R$-Mod.
This problem reasonably leads to several finitistic dimensions.

\vskip5pt

The (left) finitistic  projective dimension ${\rm Fpd} = {\rm Fpd}(R)$ of a ring $R$ has been introduced by H. Bass  (\cite{Bass}), which is defined as
$${\rm Fpd} = {\rm sup}\{{\rm pd}M \ | \ M\in R\mbox{-}{\rm Mod}, \ {\rm pd}M <\infty\}.$$
Similarly, the (left) finitistic flat dimension ${\rm Ffd} = {\rm Ffd}(R)$ of $R$ is defined as
$${\rm Ffd} = {\rm sup}\{{\rm fd}M \ | \ M\in R\mbox{-}{\rm Mod}, \ {\rm fd}M <\infty\}.$$
The  (left) finitistic Gorenstein projective dimension ${\rm FGpd} = {\rm FGpd}(R)$ (\cite[2.27]{Ho}) is defined as
$${\rm FGpd} = {\rm sup}\{{\rm Gpd}M \ | \ M\in R\mbox{-}{\rm Mod}, \ {\rm Gpd}M <\infty\}.$$
The  (left) finitistic Gorenstein flat dimension FGfd = FGfd$(R)$  (\cite[3.24]{Ho}) is defined as
$${\rm FGfd} = {\rm sup}\{{\rm Gfd}M \ | \ M\in R\mbox{-}{\rm Mod}, \ {\rm Gfd}M <\infty\}.$$
Also, one can define the  (left) finitistic PGF dimension of $R$ as
$${\rm FPGFd} = {\rm sup}\{{\rm PGFd}M \ | \ M\in R\mbox{-}{\rm Mod}, \ {\rm PGFd}M <\infty\}.$$

\begin{prop} \label{finitisticdims} \ Let $R$ be a ring. Then

\vskip5pt

{\rm (1)} \ One has ${\rm FPGFd} = {\rm FGpd} = {\rm Fpd}$.

\vskip5pt

{\rm (2)} \ If ${\rm fd}M<\infty$, then ${\rm Gfd}M = {\rm fd}M$.

\vskip5pt

{\rm (3)} \ One has ${\rm FGfd} = {\rm Ffd} \le {\rm FPGFd}= {\rm FGpd} = {\rm Fpd}.$

\end{prop}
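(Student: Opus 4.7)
The plan is to follow H.~Holm's argument in \cite[Theorems 2.28 and 3.24]{Ho}, adapted to incorporate the PGF dimension. For (1), the pointwise inequalities ${\rm Gpd}\, M \le {\rm PGFd}\, M \le {\rm pd}\, M$ recorded at the end of Section~1 give ${\rm FGpd} \le {\rm FPGFd} \le {\rm Fpd}$ upon taking suprema, while the reverse ${\rm Fpd} \le {\rm FGpd}$ is trivial: any Gorenstein projective module of finite projective dimension is projective, forcing ${\rm Gpd}\, M = {\rm pd}\, M$ whenever ${\rm pd}\, M < \infty$. The remaining inequality ${\rm FGpd} \le {\rm Fpd}$ follows from Holm's characterization, namely ${\rm Gpd}\, M \le n$ iff ${\rm Ext}^i_R(M, L) = 0$ for all $i > n$ and all $L$ with ${\rm pd}\, L < \infty$. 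Consequently ${\rm Gpd}\, M = n < \infty$ forces the existence of some $L$ with ${\rm pd}\, L < \infty$ and ${\rm Ext}^n_R(M, L) \ne 0$, and this non-vanishing forces ${\rm pd}\, L \ge n$, so $n \le {\rm Fpd}$. The characterization itself is proved by dimension-shifting starting from the right $\mathcal{GP}$-approximation supplied by Lemma~\ref{basic}.

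For (2), one always has ${\rm Gfd}\, M \le {\rm fd}\, M$. The converse, under the stated hypotheses, rests on the following consequence of right coherence: \emph{a Gorenstein flat module of finite flat dimension is flat.} This is standard via character modules --- over right coherent $R$, the character module of a Gorenstein flat left module is Gorenstein injective, and a Gorenstein injective module of finite injective dimension is injective. Setting $n = {\rm Gfd}\, M$, the Auslander--Buchweitz-type resolution for Gorenstein flat modules supplies $0 \to K \to G \to M \to 0$ with $G \in \mathcal{GF}$ and ${\rm fd}\, K = n-1$. If further ${\rm fd}\, M < \infty$, then ${\rm fd}\, G < \infty$, so $G$ is flat by the key lemma, and the sequence yields ${\rm fd}\, M \le n$, hence the desired equality.

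For (3), ${\rm Ffd} \le {\rm FGfd}$ is immediate from (2). For ${\rm FGfd} \le {\rm Ffd}$, given ${\rm Gfd}\, M = n < \infty$, I would use the dual Auslander--Buchweitz resolution for Gorenstein flat modules (deducible from Maaouy's cotorsion pair) or the Tor analogue of Holm's characterization to produce $L$ with ${\rm fd}\, L = n < \infty$, forcing $n \le {\rm Ffd}$. The remaining inequality ${\rm Ffd} \le {\rm Fpd}$ holds over any ring: if ${\rm fd}\, M = n$, the $n$-th syzygy in any projective resolution of $M$ is flat, hence has finite projective dimension by the Auslander--Osofsky bound on the projective dimension of flat modules; consequently ${\rm pd}\, M < \infty$ and $n = {\rm fd}\, M \le {\rm pd}\, M \le {\rm Fpd}$. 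Combined with (1), the full chain ${\rm FGfd} = {\rm Ffd} \le {\rm Fpd} = {\rm FPGFd} = {\rm FGpd}$ follows. The principal obstacle throughout is the non-vanishing direction of Holm's characterization---guaranteeing that, when the appropriate Gorenstein invariant of $M$ equals $n$, some test module of finite ordinary projective (or flat) dimension detects $n$ via a non-zero ${\rm Ext}^n$ (or ${\rm Tor}^R_n$); this is resolved by a syzygy chase originating in Lemmas~\ref{basic} and \ref{PGFbasic}.
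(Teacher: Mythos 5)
The central problem is in part (1). You claim that the pointwise inequalities ${\rm Gpd}\,M \le {\rm PGFd}\,M \le {\rm pd}\,M$ give ${\rm FGpd} \le {\rm FPGFd} \le {\rm Fpd}$ ``upon taking suprema,'' but that step is invalid: the three finitistic invariants are suprema over \emph{different} classes of modules. For instance, ${\rm FPGFd}\le{\rm Fpd}$ would require that every module of finite PGF dimension also have finite projective dimension, which is exactly what is not available. You then correctly obtain ${\rm Fpd}\le{\rm FGpd}$, and for ${\rm FGpd}\le{\rm Fpd}$ you argue that ${\rm Gpd}\,M=n<\infty$ produces $L\in\mathcal P^{<\infty}$ with ${\rm Ext}^n_R(M,L)\ne 0$ and that ``this non-vanishing forces ${\rm pd}\,L\ge n$.'' That implication is false: ${\rm pd}\,L$ controls the vanishing of ${\rm Ext}^i(L,-)$, not ${\rm Ext}^i(-,L)$; already ${\rm Ext}^1_{\mathbb Z}(\mathbb Z/p,\mathbb Z)\ne 0$ while ${\rm pd}_{\mathbb Z}\,\mathbb Z=0$. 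Consequently nothing in your proposal actually establishes ${\rm FPGFd}={\rm Fpd}$, which is the content of (1) that the paper adds on top of Holm's ${\rm FGpd}={\rm Fpd}$. The paper's proof of that inequality is a genuinely constructive step: given ${\rm PGFd}\,M=m<\infty$, take $0\to K\to G\to M\to 0$ from Lemma~\ref{PGFbasic} with $G\in\mathcal{PGF}$ and ${\rm pd}\,K=m-1$, embed $G$ into a projective $P$ (possible because $G$ is PGF), set $L=P/K$, and verify ${\rm pd}\,L=m$ by a splitting and ${\rm Ext}$-shifting argument. Your proposal has no analogue of this construction.

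Part (2) is essentially correct but takes a different route: you reduce to ``a Gorenstein flat module of finite flat dimension is flat'' (cited in the paper as \cite[Remark 1.5]{E}) together with an Auslander--Buchweitz approximation for Gorenstein flat modules, whereas the paper uses Holm's Tor-characterization of ${\rm Gfd}$ over right coherent rings \cite[Theorem 3.14]{Ho} and exhibits an injective right module $I$ with ${\rm Tor}^R_m(I,M)\ne 0$; the paper's version is shorter and needs only that one cited theorem. In part (3), the assertion that every flat module has finite projective dimension ``by the Auslander--Osofsky bound'' is not correct unconditionally; what is true (Jensen) is that flat modules have finite projective dimension provided ${\rm Fpd}<\infty$, so the argument for ${\rm Ffd}\le{\rm Fpd}$ needs to split into the cases ${\rm Fpd}<\infty$ and ${\rm Fpd}=\infty$. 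The paper avoids this entirely by citing \cite[Theorem 3.24]{Ho} for ${\rm FGfd}={\rm Ffd}$, using (2) to get ${\rm Ffd}\le{\rm FPGFd}$, and then invoking (1). Your sketch of ${\rm FGfd}\le{\rm Ffd}$ is also left unfinished, though it can simply be replaced by the citation to Holm.
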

\begin{proof} (1) \ \cite[Theorem 2.28]{Ho} claims that ${\rm FGpd} = {\rm Fpd}$. Thus, it suffices to prove ${\rm FPGFd} = {\rm Fpd}$.

\vskip5pt

Since $\mathcal P \subseteq \mathcal {PGF}\subseteq \mathcal{GP}$, it follows that ${\rm Gpd}M \le {\rm PGFd}M\le {\rm pd}M$ for any $R$-module.

\vskip5pt

If ${\rm pd}M < \infty$, then ${\rm Gpd}M = {\rm pd}M$ (see \cite[Proposition 2.27]{Ho}), and hence  ${\rm PGFd}M = {\rm pd}M$ (see also \cite[Corollary 3.7(i)]{ET}).
It follows that ${\rm Fpd}\le {\rm FPGFd}.$

\vskip5pt

If $0< {\rm PGFd}M = t < \infty$, then by \cite[Proposition 3.1]{DE} (cf. Lemma \ref{PGFbasic}) there is an exact sequence $0\longrightarrow K\longrightarrow G\longrightarrow M\longrightarrow 0$ with $G\in\mathcal {PGF}$ and
${\rm pd} K = t-1.$ This implies that ${\rm FPGFd}\le {\rm Fpd}+1$.

\vskip5pt

It remains to prove ${\rm FPGFd}\le {\rm Fpd}.$ It suffices to show ${\rm FPGFd}\le {\rm Fpd}$ under the assumption ${\rm FPGFpd} = m<\infty.$ (Otherwise ${\rm FPGFpd} =\infty,$
then by ${\rm FPGFd}\le {\rm Fpd}+1$ one sees that ${\rm Fpd} = \infty$.) Take a module $M$ with ${\rm PGFd}M =m$, it suffices to find a module $L$ with ${\rm pd} L = m$. Thus one can assume that $m\ge 1$.
By \cite[Proposition 3.1]{DE} (cf. Lemma \ref{PGFbasic}) there is an exact sequence $0\longrightarrow K\longrightarrow G\longrightarrow M\longrightarrow 0$ with $G\in\mathcal {PGF}$ and
${\rm pd} K = m-1.$ Since $G\in\mathcal {PGF}$, $G$ is a submodule of a projective module $P$. Put $L = P/K$. Then ${\rm pd} L \le m$.
We claim that ${\rm pd} L = m$, and then we are done.

\vskip5pt

If $m = 1$ then $K$ is projective. If $L$ is projective, then
$0\longrightarrow K \longrightarrow P \longrightarrow L\longrightarrow 0$ splits, and hence $0\longrightarrow K \longrightarrow G \longrightarrow M\longrightarrow 0$ splits. Thus
$M\in\mathcal {PGF}$, i.e., ${\rm PGFd}M =0$. This contradicts  the assumption ${\rm PGFd}M =m = 1$. This proves ${\rm pd} L = 1 = m$.

\vskip5pt If $m \ge 2$, then by ${\rm pd} K = m-1\ge 1$ there is a module $Z$ such that $\Ext^{m-1}_R(K, Z) \ne 0$. By the exact sequence
$\Ext^{m-1}_R(P, Z) = 0 \longrightarrow \Ext^{m-1}_R(K, Z)\longrightarrow \Ext^{m}_R(L, Z)\longrightarrow 0=\Ext^{m}_R(P, Z)$
one sees that $\Ext^{m}_R(L, Z)\ne 0$. Thus ${\rm pd} L = m$.

\vskip5pt

(2) \ According to \cite[Proposition 3.8]{E2}, if $M\in \mathcal{PGF}^\perp $ then ${\rm Gfd}M = {\rm fd}M$. Thus, it suffices to say that ${\mathcal F}^{< \infty}\subseteq \mathcal{PGF}^{\perp}$. By \cite[Theorem 4.4 or Theorem 4.11]{SS2}, ${\mathcal F}\subseteq {\rm PGF}^{\perp}$. Then ${\mathcal F}^{< \infty}\subseteq \mathcal{PGF}^{\perp}$ since $\mathcal{PGF}^{\perp}$ is thick.

\vskip5pt

(3) \ I. Emmanouil \cite[Proposition 2.3]{E2} already claims that ${\rm FGfd} = {\rm Ffd}$.

\vskip5pt

By C. U.  Jensen \cite [Proposition 6]{J}, if ${\rm Fpd}< \infty$ then all flat modules have finite projective dimension. 
It follows that 
${\rm Ffd}\le \mathcal{\rm Fpd}$.

\vskip5pt

Together with (1) one gets
${\rm FGfd} = {\rm Ffd} \le {\rm FPGFd}= {\rm FGpd} = {\rm Fpd}.$
\end{proof}

\begin{rem} We thank Ioannis Emmanouil for pointing out an error in the original proof of Proposition \ref{finitisticdims}(3), and for pointing out to us the paper \cite{J}.
\end{rem}

\subsection{Abelian model structures arising from $\mathcal {PGF}^{<\infty}$, $\mathcal {GP}^{<\infty}$, and $\mathcal {GF}^{<\infty}$}

\begin{cor} \label{finitedimensionversion} \ Let $R$ be a ring. Assume that the finitistic projective dimension ${\rm Fpd}$ of $R$ is finite, say  ${\rm Fpd}=t$. Then

\vskip5pt

{\rm (1)} \ The pair \ $(\mathcal{PGF}^{<\infty}, \  (\mathcal{P}^{<\infty})^{\perp} \cap \mathcal {PGF}^{\perp}) = (\mathcal{PGF}_t, \  \mathcal{P}_t^{\perp} \cap \mathcal {PGF}^{\perp})$
is a complete and hereditary cotorsion pair, and \ $(\mathcal{PGF}^{<\infty}, \ (\mathcal{P}^{<\infty})^{\perp}, \ \mathcal {PGF}^{\perp})
= (\mathcal{PGF}_t, \ \mathcal{P}_t^{\perp}, \ \mathcal {PGF}^{\perp})$ is a hereditary Hovey triple,  in $R\mbox{-}{\rm Mod}.$

\vskip5pt

{\rm (2)} \ If $R$ is an Artin algebra, then  \ $(\mathcal{GP}^{<\infty}, \  (\mathcal{P}^{<\infty})^{\perp} \cap \mathcal {GP}^{\perp}) =
(\mathcal{GP}_t, \ \mathcal{P}_t^{\perp} \cap \mathcal {GP}^{\perp})$ is a complete and hereditary cotorsion pair,
and \ $(\mathcal{GP}^{<\infty}, \ (\mathcal{P}^{<\infty})^{\perp}, \ \mathcal {GP}^{\perp}) = (\mathcal{GP}_t, \ \mathcal{P}_t^{\perp}, \ \mathcal {GP}^{\perp})$
is a hereditary Hovey triple,  in $R\mbox{-}{\rm Mod}.$

\vskip5pt

{\rm (3)} \ The pair
\ $(\mathcal{GF}^{<\infty}, \  (\mathcal{F}^{<\infty})^{\perp} \cap \mathcal {PGF}^{\perp}) = (\mathcal{GF}_t, \  \mathcal{F}_t^{\perp} \cap \mathcal {PGF}^{\perp})$
is a complete and hereditary cotorsion pair, and
\ $(\mathcal{GF}^{<\infty}, \ (\mathcal{F}^{<\infty})^{\perp}, \ \mathcal {PGF}^{\perp}) = (\mathcal{GF}_t, \ \mathcal{F}_t^{\perp}, \ \mathcal {PGF}^{\perp})$ is a hereditary Hovey triple, in $R\mbox{-}{\rm Mod}.$
\end{cor}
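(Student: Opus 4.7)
The plan is to reduce each statement in this ``infinite dimension'' corollary to the corresponding ``bounded level $n$'' result already established, by using the assumption ${\rm Fpd}=t$ together with Proposition \ref{finitisticdims} to identify the relevant classes with their truncated versions at level $t$. The whole argument is a chain of class equalities followed by a single substitution $n=t$.

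First, for (1), I would note that ${\rm Fpd}=t$ says precisely that $\mathcal{P}^{<\infty}=\mathcal{P}_t$, hence $(\mathcal{P}^{<\infty})^\perp=\mathcal{P}_t^\perp$. By Proposition \ref{finitisticdims}(1), ${\rm FPGFd}={\rm Fpd}=t$, so $\mathcal{PGF}^{<\infty}=\mathcal{PGF}_t$. Substituting these equalities into Corollary \ref{mainthm2} with $n=t$ gives that $(\mathcal{PGF}^{<\infty},\,(\mathcal{P}^{<\infty})^{\perp}\cap\mathcal{PGF}^{\perp})$ is a complete hereditary cotorsion pair in $R$-Mod and that $(\mathcal{PGF}^{<\infty},\,(\mathcal{P}^{<\infty})^{\perp},\,\mathcal{PGF}^{\perp})$ is a hereditary Hovey triple.

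For (2), the strategy is identical. Under the Artin algebra hypothesis, Proposition \ref{finitisticdims}(1) again gives ${\rm FGpd}={\rm Fpd}=t$, so $\mathcal{GP}^{<\infty}=\mathcal{GP}_t$; combined with $\mathcal{P}^{<\infty}=\mathcal{P}_t$, Theorem \ref{mainthm4} with $n=t$ produces both claims. For (3), the right coherence of $R$ is what is needed to invoke Proposition \ref{finitisticdims}(3), which yields ${\rm FGfd}={\rm Ffd}\leq{\rm Fpd}=t$. Consequently both $\mathcal{F}^{<\infty}=\mathcal{F}_t$ and $\mathcal{GF}^{<\infty}=\mathcal{GF}_t$, and the pair and triple asserted in (3) are exactly Maaouy's cotorsion pair and Hovey triple (\cite[Theorem A]{M}) with $n=t$.

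There is essentially no obstacle beyond correctly matching the identifications. The only subtlety worth stating explicitly is that the inclusion ${\rm Ffd}\leq{\rm Fpd}$ used in (3) together with ${\rm FGfd}={\rm Ffd}$ is precisely what allows the same bound $t$ to control both the flat and Gorenstein flat finite-dimension classes, so no new truncation index is needed in the coherent case. All the homological heavy lifting (completeness of cotorsion pairs, thickness of $\mathcal{PGF}^\perp$ and $\mathcal{GP}^\perp$, the Hovey triple structure) has already been done in Corollary \ref{mainthm2}, Theorem \ref{mainthm4}, and \cite[Theorem A]{M}; this corollary is merely the observation that finiteness of ${\rm Fpd}$ collapses the ``$<\infty$'' versions to the ``$n$'' versions.
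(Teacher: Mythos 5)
Your proposal matches the paper's proof exactly: both reduce the ``$<\infty$'' claims to the level-$t$ results (Corollary \ref{mainthm2}, Theorem \ref{mainthm4}, and \cite[Theorem A]{M}) by invoking Proposition \ref{finitisticdims} to identify $\mathcal{P}^{<\infty}=\mathcal{P}_t$, $\mathcal{PGF}^{<\infty}=\mathcal{PGF}_t$, $\mathcal{GP}^{<\infty}=\mathcal{GP}_t$, and, under right coherence, $\mathcal{F}^{<\infty}=\mathcal{F}_t$ and $\mathcal{GF}^{<\infty}=\mathcal{GF}_t$. Your remark that the inequality ${\rm Ffd}\le{\rm Fpd}$ from Proposition \ref{finitisticdims}(3) is what allows the single index $t$ to truncate both the flat and Gorenstein flat classes is a correct and slightly more explicit rendering of what the paper's one-line proof leaves implicit.
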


\begin{proof} \ By Proposition \ref{finitisticdims}(3) one has ${\rm FGFd}= {\rm Ffd}\le {\rm FPGFd} = {\rm FGpd} = t$. Thus
$\mathcal{PGF}^{<\infty} = \mathcal{PGF}_t$ and $\mathcal{P}^{<\infty} = \mathcal{P}_t$, \ $\mathcal{GP}^{<\infty} = \mathcal{GP}_t$ \ and $\mathcal{P}^{<\infty} = \mathcal{P}_t$,
\ and $\mathcal{GF}^{<\infty} = \mathcal{GF}_t$ \ and $\mathcal{F}^{<\infty} = \mathcal{F}_t$.
 Then the assertions (1), (2) and (3) follow from Corollary \ref{mainthm2}, Theorem \ref{mainthm4}, and \cite[Theorem A]{M}, respectively.\end{proof}

\section{\bf Induced complete cotorsion pairs in exact categories with applications}
\subsection{\bf Induced complete cotorsion pairs in exact categories}

The following observation will provide new complete cotorsion pairs in exact categories.

\vskip5pt

\begin{thm} \label{inducedctp1} \ Let $\mathcal A$ be an abelian category, and $\mathcal B$ a full subcategory which is closed under extensions.
Suppose that $(\mathcal X, \ \mathcal Y)$ is a complete cotorsion pair in $\mathcal A$ and $\mathcal X\subseteq \mathcal B$. If $\mathcal B$ is closed under the kernels of epimorphisms, then

\vskip5pt

$(1)$ \  $(\mathcal X, \ \mathcal Y\cap\mathcal B)$ is a complete cotorsion pair in exact category $\mathcal B.$

\vskip5pt

$(2)$ \  If  $(\mathcal X, \ \mathcal Y)$ is a hereditary cotorsion pair in $\mathcal A$, then $(\mathcal X, \ \mathcal Y\cap\mathcal B)$ is a hereditary  cotorsion pair in $\mathcal B$.
\end{thm}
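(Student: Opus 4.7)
The plan is to leverage the observation that, since $\mathcal{B}$ is closed under extensions in $\mathcal{A}$, the Yoneda group $\mathrm{Ext}^1_{\mathcal{B}}(B_1,B_2)$ computed in the induced exact structure on $\mathcal{B}$ coincides with $\mathrm{Ext}^1_{\mathcal{A}}(B_1,B_2)$ for all $B_1,B_2\in\mathcal{B}$: any representative short exact sequence in $\mathcal{A}$ with end terms in $\mathcal{B}$ has middle term in $\mathcal{B}$ by extension closure, so becomes a conflation in $\mathcal{B}$, and the converse inclusion is obvious. This identification is the technical engine behind every step.

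For part $(1)$, it follows at once from the Ext identification that $\mathrm{Ext}^1_{\mathcal{B}}(\mathcal{X},\mathcal{Y}\cap\mathcal{B})=0$. To establish the orthogonality $\mathcal{X}={}^{\perp}(\mathcal{Y}\cap\mathcal{B})$ computed in $\mathcal{B}$, I would take $B\in\mathcal{B}$ with $\mathrm{Ext}^1_{\mathcal{B}}(B,Y)=0$ for every $Y\in\mathcal{Y}\cap\mathcal{B}$, and apply completeness of $(\mathcal{X},\mathcal{Y})$ in $\mathcal{A}$ to obtain
\[
0\longrightarrow Y\longrightarrow X\longrightarrow B\longrightarrow 0
\]
with $X\in\mathcal{X}$ and $Y\in\mathcal{Y}$. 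Since $X\in\mathcal{X}\subseteq\mathcal{B}$ and $B\in\mathcal{B}$, closure of $\mathcal{B}$ under kernels of epimorphisms forces $Y\in\mathcal{Y}\cap\mathcal{B}$; the sequence then represents a vanishing class in $\mathrm{Ext}^1_{\mathcal{B}}(B,Y)$, hence splits, exhibiting $B$ as a direct summand of $X\in\mathcal{X}$; since $\mathcal{X}$ is closed under direct summands (as the left part of any cotorsion pair), we conclude $B\in\mathcal{X}$. The reverse relation $\mathcal{Y}\cap\mathcal{B}=\mathcal{X}^{\perp}$ in $\mathcal{B}$ follows from the Ext identification together with $\mathcal{X}\subseteq\mathcal{B}$: for $B\in\mathcal{B}$, the vanishing $\mathrm{Ext}^1_{\mathcal{B}}(X,B)=0$ for all $X\in\mathcal{X}$ is the same as $\mathrm{Ext}^1_{\mathcal{A}}(X,B)=0$, which places $B$ in $\mathcal{X}^{\perp}=\mathcal{Y}$.

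For completeness in $\mathcal{B}$, given any $B\in\mathcal{B}$ the two approximation sequences afforded by completeness in $\mathcal{A}$ pull into $\mathcal{B}$ via the two closure properties: in $0\to Y\to X\to B\to 0$ the kernel-of-epimorphism hypothesis forces $Y\in\mathcal{B}$, while in $0\to B\to Y'\to X'\to 0$, with $X'\in\mathcal{X}\subseteq\mathcal{B}$, extension closure of $\mathcal{B}$ forces $Y'\in\mathcal{B}$. Both are then admissible conflations in $\mathcal{B}$ of the required form. For part $(2)$, any admissible conflation $0\to X_1\to X_2\to X_3\to 0$ in $\mathcal{B}$ with $X_2,X_3\in\mathcal{X}$ is, in particular, a short exact sequence in $\mathcal{A}$ whose middle and right terms lie in $\mathcal{X}$; heredity of $(\mathcal{X},\mathcal{Y})$ in $\mathcal{A}$ then yields $X_1\in\mathcal{X}$, so $\mathcal{X}$ is closed under kernels of deflations in $\mathcal{B}$, which is precisely the heredity of $(\mathcal{X},\mathcal{Y}\cap\mathcal{B})$ in $\mathcal{B}$.

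The critical point, and the sole place where the kernel-of-epimorphism hypothesis on $\mathcal{B}$ is used nontrivially, is the converse orthogonality argument together with the first approximation sequence of completeness: without this closure one could still construct the sequence in $\mathcal{A}$, but could not guarantee $Y\in\mathcal{Y}\cap\mathcal{B}$, so the assumed vanishing of $\mathrm{Ext}^1_{\mathcal{B}}(B,Y)$ would be inapplicable and the splitting step would collapse. Everything else is essentially a bookkeeping translation afforded by the Ext identification.
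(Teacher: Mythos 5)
Your proof is correct and follows essentially the same route as the paper: the Ext-group identification $\operatorname{Ext}^1_{\mathcal B} = \operatorname{Ext}^1_{\mathcal A}$ on objects of $\mathcal B$ (granted by extension-closure), the approximation sequence from completeness in $\mathcal A$ pulled into $\mathcal B$ via kernel-of-epimorphism closure, and the resulting splitting argument for $^\perp(\mathcal Y\cap\mathcal B)\subseteq\mathcal X$ are all the same steps the authors take.

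One small point of precision in part $(2)$: you verify that $\mathcal X$ is closed under kernels of deflations in $\mathcal B$ and then assert this ``is precisely the heredity'' of $(\mathcal X, \mathcal Y\cap\mathcal B)$. Under the paper's definition, heredity of a cotorsion pair requires \emph{both} that the left class be closed under kernels of deflations \emph{and} that the right class be closed under cokernels of inflations. The two conditions are equivalent for a complete cotorsion pair in a weakly idempotent complete exact category (and $\mathcal B$ is indeed weakly idempotent complete here, since closure under kernels of epimorphisms already gives kernels of split epis), and the paper invokes precisely this equivalence with a citation. Your argument would benefit from either citing that equivalence or, more elementarily, verifying the second condition directly: if $0\to Y_1\to Y_2\to Y_3\to 0$ is a conflation in $\mathcal B$ with $Y_1, Y_2\in\mathcal Y\cap\mathcal B$, then $Y_3\in\mathcal B$ by definition of the induced exact structure, and $Y_3\in\mathcal Y$ by heredity of $(\mathcal X,\mathcal Y)$ in $\mathcal A$. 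Aside from that presentational gap, the proof is sound.
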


\begin{proof} \ $(1)$ \ It is clear that $\mathcal B$ is an exact category, and $\Ext^1_\mathcal B(X,  Y) = \Ext^1_\mathcal A(X,  Y) = 0$
for $X\in \mathcal X$ and $Y\in \mathcal Y\cap\mathcal B$. Thus
$\mathcal Y\cap\mathcal B\subseteq \mathcal X^\perp\cap\mathcal B$ and $\mathcal X\subseteq \ ^\perp(\mathcal Y\cap\mathcal B)\cap\mathcal B$.
Let $M\in \mathcal X^\perp\cap\mathcal B$. Then it is clear that $M\in \mathcal Y\cap\mathcal B$. Thus
$\mathcal Y\cap\mathcal B = \mathcal X^\perp\cap\mathcal B$. Let $M\in \ ^\perp(\mathcal Y\cap\mathcal B)\cap\mathcal B$. Since
$(\mathcal X, \ \mathcal Y)$ is a complete cotorsion pair in $\mathcal A$, there is an exact sequence
$0\longrightarrow Y\longrightarrow  X\longrightarrow M\longrightarrow 0$ with $X\in\mathcal X$ and $Y\in\mathcal Y$. Since $\mathcal X\subseteq \mathcal B$
and $\mathcal B$ is closed under the kernels of epimorphisms, $Y\in \mathcal Y\cap \mathcal B.$ Since $M\in \ ^\perp(\mathcal Y\cap\mathcal B)\cap\mathcal B$, \ $\Ext^1_\mathcal A(M, Y) = 0$,
thus the short exact sequence splits and $M\in\mathcal X.$

\vskip5pt

Thus $(\mathcal X, \ \mathcal Y\cap\mathcal B)$ is a cotorsion pair in $\mathcal B.$ For $M\in\mathcal B$, as above there is an exact sequence
$0\longrightarrow Y\longrightarrow  X\longrightarrow M\longrightarrow 0$ with $X\in\mathcal X$ and $Y\in\mathcal Y\cap \mathcal B$. By definition it is an admissible exact sequence in $\mathcal B.$  Also there is an exact sequence
$0\longrightarrow M\longrightarrow  Y'\longrightarrow X'\longrightarrow 0$ with $Y'\in\mathcal Y$ and $X'\in\mathcal X \subseteq \mathcal B$. Then
$Y'\in \mathcal Y\cap\mathcal B.$ Thus $(\mathcal X, \ \mathcal Y\cap\mathcal B)$ is a complete cotorsion pair in $\mathcal B.$

\vskip5pt

$(2)$ \ Since $\mathcal X$ is closed under the kernels of epimorphisms in $\mathcal A$, $\mathcal X$ is closed under the kernels of deflations
in $\mathcal B$. Since $(\mathcal X, \ \mathcal Y\cap\mathcal B)$ is a complete cotorsion pair in $\mathcal B,$ it follows that it is also hereditary (cf. \cite[Lemma 6.17]{S}; or Subsection 1.2).
 \end{proof}

\vskip5pt

The dual of Theorem \ref{inducedctp1} is

\vskip5pt

{\bf Theorem 5.1'} \label{inducedctp2} \ {\it Let $\mathcal A$ be an abelian category, and $\mathcal B$ an full subcategory which is closed under extensions.
Suppose that $(\mathcal X, \ \mathcal Y)$ is a complete cotorsion pair in $\mathcal A$ and $\mathcal Y\subseteq \mathcal B$. If $\mathcal B$ is closed under the cokernels of monomorphisms, then

\vskip5pt

$(1)$ \  $(\mathcal X\cap\mathcal B, \ \mathcal Y)$ is a complete cotorsion pair in exact category $\mathcal B.$

\vskip5pt

$(2)$ \  If  $(\mathcal X, \ \mathcal Y)$ is a hereditary cotorsion pair in $\mathcal A$, then $(\mathcal X\cap\mathcal B, \ \mathcal Y)$ is a hereditary  cotorsion pair in $\mathcal B$.}

\vskip5pt

\subsection{Application 1: Exact model structures on $\mathcal{PGF}_n$} For each non-negative integer $n$, one has already known
two chains of complete and hereditary cotorsion pairs $(\mathcal X, \ \mathcal Y)$ in $R$-Mod with
$\mathcal X\subseteq \mathcal{PGF}_n$, namely $(\mathcal P_m, \ \mathcal P_m^\perp), \  \ (\mathcal {PGF}_m, \ \mathcal P_m^\perp\cap \mathcal {PGF}^\perp)$,
for each non-negative integer $m$ with $m\le n$.  See Lemma \ref{ctp7.4.6} and Corollary \ref{mainthm2}. Applying Theorem \ref{inducedctp1} one will get
new chains of complete and hereditary cotorsion pairs, and exact model structures, on $\mathcal{PGF}_n$.

\vskip5pt

Note that $\mathcal P_n$ is not thick in $R$-Mod, in general.

\begin{lem} \label{pnthickinpgfn} \ Let $R$ be a ring and $n$ a non-negative integer. Then
$\mathcal P_n$ is thick in $\mathcal {PGF}_n$ and in $\mathcal{G P}_n$.
\end{lem}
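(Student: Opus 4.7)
The class $\mathcal P_n$ is obviously closed under direct summands in $R$-Mod. So the only thing to verify is the two-out-of-three property in every admissible exact sequence of the ambient exact category. Fix a conflation
\[
0\longrightarrow X\longrightarrow Y\longrightarrow Z\longrightarrow 0
\]
in $\mathcal{PGF}_n$ (respectively $\mathcal{GP}_n$), i.e.\ a short exact sequence in $R$-Mod with all three terms in $\mathcal{PGF}_n$ (respectively $\mathcal{GP}_n$). I shall argue that if any two of $X,Y,Z$ belong to $\mathcal P_n$, then so does the third.

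\textbf{The two easy cases.} If $X,Z\in \mathcal P_n$, then from the long exact Ext sequence one has $\mathrm{pd}\,Y\le \max\{\mathrm{pd}\,X,\mathrm{pd}\,Z\}\le n$, so $Y\in \mathcal P_n$. If $Y,Z\in \mathcal P_n$, the same long exact sequence gives $\mathrm{pd}\,X\le \max\{\mathrm{pd}\,Y,\mathrm{pd}\,Z-1\}\le n$, so $X\in \mathcal P_n$. These are standard dimension-shift arguments and require nothing about $\mathcal{PGF}$ or $\mathcal{GP}$.

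\textbf{The main case.} Suppose $X,Y\in \mathcal P_n$ and $Z\in\mathcal{PGF}_n$ (respectively $Z\in\mathcal{GP}_n$). The naive Ext estimate only yields $\mathrm{pd}\,Z\le n+1$, which is one short of what I need. The key observation is that this already forces $\mathrm{pd}\,Z<\infty$, and for modules of finite projective dimension one has the coincidence
\[
\mathrm{pd}\,Z=\mathrm{PGFd}\,Z=\mathrm{Gpd}\,Z,
\]
(the first equality is \cite[Corollary 3.7(i)]{ET}, which the paper already invokes in the proof of Proposition \ref{finitisticdims}; the second is Holm's \cite[Proposition 2.27]{Ho}). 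Combining this with $\mathrm{PGFd}\,Z\le n$ (respectively $\mathrm{Gpd}\,Z\le n$) gives $\mathrm{pd}\,Z\le n$, hence $Z\in \mathcal P_n$. This finishes both thickness statements at once, since the argument only uses the hypothesis $Z\in\mathcal{PGF}_n$ or $Z\in\mathcal{GP}_n$ once the finiteness of $\mathrm{pd}\,Z$ is established.

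\textbf{Main obstacle.} The only subtle step is bridging the gap between $\mathrm{pd}\,Z\le n+1$ and $\mathrm{pd}\,Z\le n$ in the main case; everything else is formal. The bridge is provided by the well-known refinement that Gorenstein projective (equivalently PGF) dimension agrees with projective dimension once the latter is finite, so I just need to cite this cleanly and the lemma falls out.
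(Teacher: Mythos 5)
Your proof is correct, and it takes a genuinely different route from the paper's primary argument. The paper proves only the non-trivial direction ($X,Y\in\mathcal P_n\Rightarrow Z\in\mathcal P_n$) and gives two proofs, both restricted to the $\mathcal{PGF}_n$ case: the first rewrites $\mathcal P_{n+1}$ as $\mathcal{PGF}^\perp\cap\mathcal{PGF}_{n+1}$ using the kernel identity $\mathcal{PGF}^\perp\cap\mathcal{PGF}_m=\mathcal P_m$ from Theorem~\ref{mainthm1}, so that $Z\in\mathcal P_{n+1}\cap\mathcal{PGF}_n=\mathcal{PGF}^\perp\cap\mathcal{PGF}_n=\mathcal P_n$; the second builds a pullback and invokes the fact that a PGF (hence Gorenstein projective) module of finite projective dimension is projective. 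Your argument bypasses the cotorsion-pair machinery of Theorem~\ref{mainthm1} entirely and instead appeals directly to the dimension coincidence ${\rm pd}\,Z={\rm PGFd}\,Z={\rm Gpd}\,Z$ once ${\rm pd}\,Z<\infty$ (Holm's \cite[Proposition~2.27]{Ho} plus the sandwich $\mathcal P\subseteq\mathcal{PGF}\subseteq\mathcal{GP}$). This buys you two things: it is arguably more elementary, and it settles the $\mathcal{PGF}_n$ and $\mathcal{GP}_n$ cases in one stroke, whereas the paper's proof, as written, treats only $\mathcal{PGF}_n$ and leaves the reader to transpose the argument to $\mathcal{GP}_n$ via the analogous kernel identity in Theorem~\ref{mainthm3}. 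On the other hand the paper's first proof dovetails with the cotorsion-pair framework that is the whole point of the section, so it reads as less of a detour in context. Both are valid; yours is a clean, self-contained alternative.
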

\begin{proof} \ We only prove that $\mathcal P_n$ is thick in $\mathcal {PGF}_n$.
Let $0\longrightarrow X\longrightarrow Y\longrightarrow Z\longrightarrow 0$ be an exact sequence in $\mathcal {PGF}_n$.
It suffices to prove that if $X\in \mathcal P_n$ and $Y\in \mathcal P_n$ then $Z\in \mathcal P_n$.
By $\mathcal {PGF}^{\perp}\cap\mathcal{PGF}_n  = \mathcal{P}_n$ (cf. Theorem \ref{mainthm1})
one has $Z\in \mathcal P_{n+1}\cap \mathcal {PGF}_n = \mathcal {PGF}^{\perp}\cap\mathcal{PGF}_{n+1} \cap \mathcal {PGF}_n
= \mathcal {PGF}^{\perp}\cap \mathcal {PGF}_n = \mathcal P_n.$

\vskip5pt

{\bf An alternative proof:} \ Since $Z\in \mathcal {PGF}_n$, by Lemma \ref{PGFbasic}
there is an exact sequence $0\longrightarrow K\longrightarrow G\longrightarrow Z\longrightarrow 0$ with $G\in \mathcal {PGF}$ and $K\in\mathcal P_{n-1}$. (If $n=0$ then $K = 0$.)
The following proof holds also for $n = 0$. Taking the pullback

\[
\xymatrix@R=0.6cm@C=30pt{& & 0\ar[d]  & 0\ar[d] \\ & & K\ar[d]\ar@{=}[r]  & K\ar[d]\\
0\ar[r] & X\ar[r]\ar@{=}[d]  & L\ar@{.>}[r]\ar@{.>}[d]  &  G\ar[r]\ar[d] & 0\\
0\ar[r] & X \ar[r]  & Y\ar[r]\ar[d]  & Z\ar[r]\ar[d] & 0\\
& &0 & 0}
\]

\vskip5pt

\noindent Since $K\in\mathcal P_{n-1}$ and $Y\in\mathcal P_n$, it follows that $L\in\mathcal P_n$.
Since $X\in\mathcal P_n$ and $L\in\mathcal P_n$, it follows that ${\rm pd} G< \infty.$ While $G\in \mathcal {PGF}$, $G$ is Gorenstein projective, and hence $G$ has to be projective (cf. \cite[Proposition 10.2.3]{EJ}).
Thus $Z\in\mathcal P_n$.
\end{proof}

\begin{cor} \label{mainthm5} \ Let $R$ be a ring, $m$ and $n$ non-negative integers with $m\le n$. Then

\vskip5pt

{\rm (1)} \ The pair \ $(\mathcal{P}_m, \ \mathcal P_m^\perp\cap \mathcal{PGF}_n)$ is a complete and hereditary cotorsion pair in weakly idempotent complete exact category $\mathcal{PGF}_n$, with
kernel $\mathcal{P}_m\cap\mathcal{P}_m^{\perp}$.

\vskip5pt

{\rm (2)} \ The pair \ $(\mathcal{PGF}_m, \ \mathcal P_m^\perp\cap \mathcal{P}_n)$ is a complete and hereditary cotorsion pair in  $\mathcal{PGF}_n$, with
kernel $\mathcal{P}_m\cap\mathcal{P}_m^{\perp}$.

\vskip 5pt

{\rm (3)}  \ The triple \ $(\mathcal{PGF}_m, \ \mathcal P_m^\perp\cap\mathcal{PGF}_n, \ \mathcal P_{n})$ is a hereditary Hovey triple in $\mathcal{PGF}_n$.
In particular, $\mathcal{PGF}_m\cap \mathcal P_m^\perp$ is a Frobenius category with $\mathcal P_{m}\cap \mathcal P_m^\perp$ as the class of projective-injective objects,
and the corresponding homotopy category is \
$(\mathcal{PGF}_m\cap \mathcal P_m^\perp)/(\mathcal P_{m}\cap \mathcal P_m^\perp)\cong \mathcal{PGF}/\mathcal P.$
\end{cor}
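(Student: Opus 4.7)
The plan is to derive all three parts from Theorem \ref{inducedctp1}, applied to cotorsion pairs already known in $R\mbox{-}{\rm Mod}$. The ambient subcategory $\mathcal{PGF}_n$ is closed under extensions and kernels of epimorphisms by Lemma \ref{PGFdim}, so the hypotheses of that theorem are satisfied. I will pull back the complete hereditary cotorsion pairs $(\mathcal P_m, \mathcal P_m^\perp)$ of Lemma \ref{ctp7.4.6} and $(\mathcal{PGF}_m, \mathcal P_m^\perp\cap\mathcal{PGF}^\perp)$ of Corollary \ref{mainthm2}, using the inclusions $\mathcal P_m\subseteq \mathcal P_n\subseteq \mathcal{PGF}_n$ and $\mathcal{PGF}_m\subseteq \mathcal{PGF}_n$, which hold because $m\le n$.

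Part (1) is then immediate. For part (2), Theorem \ref{inducedctp1} yields the complete hereditary cotorsion pair $(\mathcal{PGF}_m, \mathcal P_m^\perp\cap\mathcal{PGF}^\perp\cap\mathcal{PGF}_n)$ in $\mathcal{PGF}_n$, and the right-hand component collapses to $\mathcal P_m^\perp\cap\mathcal P_n$ via the identity $\mathcal{PGF}^\perp\cap\mathcal{PGF}_n=\mathcal P_n$ from Theorem \ref{mainthm1}. In both parts the kernel works out to $\mathcal P_m\cap\mathcal P_m^\perp$: for (2), any $M$ in the kernel lies in $\mathcal P_n\subseteq\mathcal{GP}^\perp\subseteq\mathcal{PGF}^\perp$ (using $\mathcal{PGF}\subseteq\mathcal{GP}$), and Theorem \ref{mainthm1} then forces $M\in\mathcal{PGF}_m\cap\mathcal{PGF}^\perp=\mathcal P_m$.

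For part (3), the candidate triple is $(\mathcal{PGF}_m, \mathcal P_m^\perp\cap\mathcal{PGF}_n, \mathcal P_n)$. Thickness of $\mathcal W=\mathcal P_n$ in $\mathcal{PGF}_n$ is Lemma \ref{pnthickinpgfn}. The pair $(\mathcal C, \mathcal F\cap\mathcal W)$ is precisely that of part (2), while $(\mathcal C\cap\mathcal W, \mathcal F)$ reduces to $(\mathcal P_m, \mathcal P_m^\perp\cap\mathcal{PGF}_n)$ once one checks that $\mathcal{PGF}_m\cap\mathcal P_n=\mathcal P_m$; this follows by the same $\mathcal P_n\subseteq\mathcal{PGF}^\perp$ argument combined with Theorem \ref{mainthm1}. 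Heredity of both constituent cotorsion pairs is inherited from (1) and (2), so the triple is a hereditary Hovey triple. Theorem \ref{hoherhov} then identifies $\mathcal{PGF}_m\cap\mathcal P_m^\perp$ as a Frobenius category with projective-injectives $\mathcal P_m\cap\mathcal P_m^\perp$ and homotopy category the corresponding stable category. For the triangle equivalence with $\mathcal{PGF}/\mathcal P$, I rely on the fact that the same Frobenius category is produced by the hereditary Hovey triple $(\mathcal{PGF}_m, \mathcal P_m^\perp, \mathcal{PGF}^\perp)$ in $R\mbox{-}{\rm Mod}$ from Corollary \ref{mainthm2}(2), whose stable category is already shown there to be triangle equivalent to $\mathcal{PGF}/\mathcal P$; since the exact structure on $\mathcal{PGF}_m\cap\mathcal P_m^\perp$ is induced from $R\mbox{-}{\rm Mod}$ regardless of whether one passes through $\mathcal{PGF}_n$, the two triangulations coincide. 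The only real obstacle is the bookkeeping needed to keep the two collapsing identities $\mathcal{PGF}^\perp\cap\mathcal{PGF}_n=\mathcal P_n$ and $\mathcal{PGF}_m\cap\mathcal P_n=\mathcal P_m$ in view throughout.
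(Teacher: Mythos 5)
Your proposal is correct and follows essentially the same route as the paper: pull back $(\mathcal P_m,\mathcal P_m^\perp)$ and $(\mathcal{PGF}_m,\mathcal P_m^\perp\cap\mathcal{PGF}^\perp)$ through Theorem~\ref{inducedctp1}, collapse the right-hand side via $\mathcal{PGF}^\perp\cap\mathcal{PGF}_n=\mathcal P_n$, and assemble the hereditary Hovey triple using thickness of $\mathcal P_n$ (Lemma~\ref{pnthickinpgfn}). The only minor divergence is the final triangle equivalence: the paper invokes Corollary~\ref{wdeterminetriho} inside $\mathcal{PGF}_n$ (comparing against the $m=0$ triple $(\mathcal{PGF},\ \mathcal{PGF}_n,\ \mathcal P_n)$), whereas you observe directly that the same Frobenius category $\mathcal{PGF}_m\cap\mathcal P_m^\perp$ with projective-injectives $\mathcal P_m\cap\mathcal P_m^\perp$ already arose from Corollary~\ref{mainthm2}(2) in $R$-Mod; both arguments are valid and amount to the same uniqueness of the stable triangulation.
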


\begin{proof} \ Note that $\mathcal{PGF}_n$ is closed under extensions and the kernels of epimorphisms (cf. Lemma \ref{PGFdim}) and
closed under direct summands (cf. \cite[Proposition 2.3]{DE}). In particular, $\mathcal{PGF}_n$ is a weakly idempotent complete exact category.

\vskip5pt

(1) \  By Lemma \ref{ctp7.4.6}, $(\mathcal{P}_m, \ \mathcal P_m^\perp)$ is a complete and hereditary cotorsion pair in $R$-Mod, for each non-negative integer $m$.
Since $m\le n$, $\mathcal{P}_m\subseteq \mathcal{P}_n\subseteq \mathcal{PGF}_n.$
Then the assertion follows from Theorem \ref{inducedctp1}.

\vskip5pt

(2) \  By Corollary \ref{mainthm2}, $(\mathcal{PGF}_m, \ \mathcal P_m^\perp\cap\mathcal {PGF}^{\perp})$ is a complete and hereditary cotorsion pair in $R$-Mod, for each non-negative integer $m$.
Since $m\le n$,  $\mathcal{PGF}_m\subseteq \mathcal{PGF}_n.$ Applying Theorem \ref{inducedctp1}, one sees that $(\mathcal{PGF}_m, \ \mathcal P_m^\perp\cap\mathcal {PGF}^{\perp}\cap \mathcal{PGF}_n)$ is a complete and hereditary cotorsion pair in $\mathcal{PGF}_n$. By Theorem \ref{mainthm1},  $\mathcal {PGF}^{\perp}\cap \mathcal{PGF}_n = \mathcal P_n$, thus
this cotorsion pair is just \ $(\mathcal{PGF}_m, \ \mathcal P_m^\perp\cap \mathcal{P}_n).$

\vskip 5pt

Note that $\mathcal{PGF}_m\cap \mathcal{P}_n = \mathcal{P}_m.$ (This follows from $\mathcal {PGF}^{\perp}\cap \mathcal{PGF}_n = \mathcal P_n$.)
Thus the kernel of this cotorsion pair is $\mathcal{P}_m\cap\mathcal{P}_m^{\perp}$.
\vskip5pt

(3) \ Put $(\mathcal C, \ \mathcal F, \ \mathcal W) = (\mathcal{PGF}_m, \ \mathcal P_m^\perp\cap\mathcal{PGF}_n, \ \mathcal P_{n})$.
By Lemma \ref{pnthickinpgfn}, $\mathcal W = \mathcal P_n$ is thick in $\mathcal{PGF}_n$.

\vskip 5pt

Since $\mathcal C\cap \mathcal W = \mathcal{PGF}_m\cap \mathcal P_{n} = \mathcal P_{m}$, it follows from (1) that
$(\mathcal C\cap \mathcal W, \ \mathcal F) = (\mathcal{P}_m, \ \mathcal P_m^\perp\cap\mathcal{PGF}_n)$ is a complete and hereditary
cotorsion pairs in $\mathcal{PGF}_n$.
Also, $(\mathcal C, \ \mathcal F\cap \mathcal W) = (\mathcal{PGF}_m, \ \mathcal P_m^\perp\cap\mathcal P_{n})$ is a complete and hereditary
cotorsion pairs in $\mathcal{PGF}_n$, by (2). So the triple is a hereditary Hovey triple in $\mathcal{PGF}_n$.
By Theorem \ref{hoherhov}, $\mathcal C\cap \mathcal F = \mathcal{PGF}_m\cap \mathcal P_m^\perp$ is a Frobenius category,
with $$\mathcal C\cap\mathcal F\cap\mathcal W = \mathcal{PGF}_m\cap \mathcal P_m^\perp\cap \mathcal P_{n}= \mathcal P_m\cap \mathcal P_{m}^\perp$$ as the class of projective-injective objects,
and the homotopy category is
$(\mathcal C\cap\mathcal F)/(\mathcal C\cap\mathcal F\cap\mathcal W)
= (\mathcal{PGF}_m\cap \mathcal P_m^\perp)/(\mathcal P_{m}\cap \mathcal P_m^\perp),$ which is triangle equivalent to $\mathcal{PGF}/\mathcal P$, by Corollary \ref{wdeterminetriho}.
\end{proof}

\vskip5pt

As we see, the exact model structure on $\mathcal{PGF}_n$ given in Corollary \ref{mainthm5}(3) is not trivial, and in general it is not projective.

\vskip5pt

\subsection{Application 2: Exact model structures on $\mathcal{PGF}^{<\infty}$}
It is clear that $\mathcal P_m\cap \mathcal{PGF}^{<\infty} = \mathcal P_{m}$. Also note that $\mathcal{PGF}_m\cap \mathcal P^{<\infty} = \mathcal P_{m}$.
In fact, by $\mathcal {PGF}^{\perp}\cap \mathcal{PGF}_t = \mathcal P_t$ for any non-negative integer (cf. Theorem \ref{mainthm1}) one has
\begin{align*}\mathcal{PGF}_m\cap \mathcal P^{<\infty}& = \bigcup\limits_{t\ge 0}(\mathcal{PGF}_m\cap \mathcal P_t)
= \bigcup\limits_{t\ge 0}(\mathcal{PGF}_m\cap \mathcal {PGF}^{\perp}\cap \mathcal{PGF}_t) = \bigcup\limits_{t\ge 0}(\mathcal{P}_m\cap \mathcal{PGF}_t)\\& =
\mathcal P_m\cap \mathcal {PGF}^{<\infty} = \mathcal P_{m}.\end{align*}

\vskip5pt

By the similar argument as in Corollary \ref{mainthm5} one gets

\vskip5pt

\begin{cor} \label{mainthm5.5} \ Let $R$ be a ring and $m$ a non-negative integer. Then

\vskip5pt

{\rm (1)} \ The pair \ $(\mathcal{P}_m, \ \mathcal P_m^\perp\cap \mathcal{PGF}^{<\infty})$ is a complete and hereditary cotorsion pair
in weakly idempotent complete exact category $\mathcal{PGF}^{<\infty}$,
with kernel $\mathcal{P}_m\cap\mathcal{P}_m^{\perp}$.

\vskip5pt

{\rm (2)} \ The pair \ $(\mathcal{PGF}_m, \ \mathcal P_m^\perp\cap \mathcal{P}^{<\infty})$ is a complete and hereditary cotorsion pair in $\mathcal{PGF}^{<\infty}$
with kernel $\mathcal{P}_m\cap\mathcal{P}_m^{\perp}$.

\vskip 5pt

{\rm (3)}  \ The triple \ $(\mathcal{PGF}_m, \ \mathcal P_m^\perp\cap\mathcal{PGF}^{<\infty}, \ \mathcal P^{<\infty})$ is a hereditary Hovey triple in $\mathcal{PGF}^{<\infty}$.
The corresponding homotopy category is \
$(\mathcal{PGF}_m\cap \mathcal P_m^\perp)/(\mathcal P_{m}\cap \mathcal P_m^\perp)\cong \mathcal{PGF}/\mathcal P.$
\end{cor}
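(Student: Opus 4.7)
The proof mirrors the one of Corollary \ref{mainthm5}, with the chain indexed by $n$ replaced by its union $\mathcal{PGF}^{<\infty}=\bigcup_{n\ge 0}\mathcal{PGF}_n$. First I would verify that $\mathcal{PGF}^{<\infty}$ is a weakly idempotent complete exact category: by Lemma~\ref{PGFdim} it is closed under extensions and under kernels of epimorphisms inside $R$-Mod, and it is closed under direct summands by \cite[Proposition 2.3]{DE}.

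For assertion (1), I would apply Theorem~\ref{inducedctp1} to the complete hereditary cotorsion pair $(\mathcal{P}_m,\mathcal{P}_m^\perp)$ in $R$-Mod (Lemma~\ref{ctp7.4.6}), noting $\mathcal{P}_m\subseteq \mathcal{PGF}^{<\infty}$. The kernel is $\mathcal{P}_m\cap(\mathcal{P}_m^\perp\cap \mathcal{PGF}^{<\infty})=\mathcal{P}_m\cap\mathcal{P}_m^\perp$. For assertion (2), I would apply Theorem~\ref{inducedctp1} to the complete hereditary cotorsion pair $(\mathcal{PGF}_m,\mathcal{P}_m^\perp\cap\mathcal{PGF}^\perp)$ in $R$-Mod provided by Corollary~\ref{mainthm2}, using $\mathcal{PGF}_m\subseteq \mathcal{PGF}^{<\infty}$. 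This gives the cotorsion pair $(\mathcal{PGF}_m,\mathcal{P}_m^\perp\cap\mathcal{PGF}^\perp\cap\mathcal{PGF}^{<\infty})$ in $\mathcal{PGF}^{<\infty}$. The identity
\[
\mathcal{PGF}^\perp\cap\mathcal{PGF}^{<\infty}
=\bigcup_{t\ge 0}(\mathcal{PGF}^\perp\cap\mathcal{PGF}_t)
=\bigcup_{t\ge 0}\mathcal{P}_t=\mathcal{P}^{<\infty},
\]
which follows from Theorem~\ref{mainthm1}, rewrites the right-hand class as $\mathcal{P}_m^\perp\cap\mathcal{P}^{<\infty}$. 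The kernel is $\mathcal{PGF}_m\cap\mathcal{P}^{<\infty}\cap\mathcal{P}_m^\perp=\mathcal{P}_m\cap\mathcal{P}_m^\perp$, using the already noted equality $\mathcal{PGF}_m\cap\mathcal{P}^{<\infty}=\mathcal{P}_m$ displayed just before the statement.

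For assertion (3), I would first observe that $\mathcal{P}^{<\infty}$ is thick in $R$-Mod (closure under direct summands is clear, and a short exact sequence with two terms of finite projective dimension forces the third to have finite projective dimension), hence a fortiori thick in $\mathcal{PGF}^{<\infty}$. Then the two complete hereditary cotorsion pairs produced in (1) and (2) are precisely the pairs $(\mathcal{C}\cap\mathcal{W},\mathcal{F})$ and $(\mathcal{C},\mathcal{F}\cap\mathcal{W})$ for the triple $(\mathcal{C},\mathcal{F},\mathcal{W})=(\mathcal{PGF}_m,\mathcal{P}_m^\perp\cap\mathcal{PGF}^{<\infty},\mathcal{P}^{<\infty})$, since $\mathcal{PGF}_m\cap\mathcal{P}^{<\infty}=\mathcal{P}_m$. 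This verifies that the triple is a hereditary Hovey triple in $\mathcal{PGF}^{<\infty}$.

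Finally, to identify the homotopy category I would specialise to $m=0$: the triple becomes $(\mathcal{PGF},\mathcal{PGF}^{<\infty},\mathcal{P}^{<\infty})$ in $\mathcal{PGF}^{<\infty}$, so $\mathcal{C}\cap\mathcal{F}=\mathcal{PGF}$ and $\mathcal{C}\cap\mathcal{F}\cap\mathcal{W}=\mathcal{PGF}\cap\mathcal{P}^{<\infty}=\mathcal{P}$, and Theorem~\ref{hoherhov} gives homotopy category $\mathcal{PGF}/\mathcal{P}$. Applying Corollary~\ref{wdeterminetriho} inside $\mathcal{PGF}^{<\infty}$ to the two hereditary Hovey triples (for $m$ and for $0$), which share the same class of trivial objects $\mathcal{W}=\mathcal{P}^{<\infty}$, yields the triangle equivalence $(\mathcal{PGF}_m\cap\mathcal{P}_m^\perp)/(\mathcal{P}_m\cap\mathcal{P}_m^\perp)\cong \mathcal{PGF}/\mathcal{P}$. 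The only step that requires a small amount of care is the identity $\mathcal{PGF}^\perp\cap\mathcal{PGF}^{<\infty}=\mathcal{P}^{<\infty}$, but this is an immediate consequence of Theorem~\ref{mainthm1} applied level by level.
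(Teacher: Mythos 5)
Your proof is correct and follows essentially the same route the paper indicates (the paper merely says ``by the similar argument as in Corollary \ref{mainthm5}'' and records the two intersection identities, leaving the details to the reader). You apply Theorem \ref{inducedctp1} to the same two cotorsion pairs, correctly rewrite $\mathcal{PGF}^\perp\cap\mathcal{PGF}^{<\infty}=\mathcal P^{<\infty}$ using Theorem \ref{mainthm1} level by level, and invoke Theorem \ref{hoherhov} together with Corollary \ref{wdeterminetriho} in the way the paper does. The one small simplification you make is that, instead of mimicking Lemma \ref{pnthickinpgfn} (which is needed because $\mathcal P_n$ is \emph{not} thick in $R$-Mod), you observe directly that $\mathcal P^{<\infty}$ is already thick in all of $R$-Mod, which immediately gives thickness in the extension-closed subcategory $\mathcal{PGF}^{<\infty}$; this is a perfectly valid and slightly cleaner route.
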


When $m = 0$, Corollary \ref{mainthm5.5} has been obtained by Dalezios and Emmanouil {\rm\cite[Theorem 4.1]{DE}.}

\section{\bf Exact model structures on $\mathcal{G P}_n$ and $\mathcal{G P}^{<\infty}$}

\subsection{Application 3: Exact model structures on $\mathcal{G P}_n$} For any non-negative integer $n$, applying Theorem \ref{inducedctp1} to $\mathcal B = \mathcal{GP}_n$ and the following
two chains of complete and hereditary cotorsion pairs $(\mathcal X, \ \mathcal Y)$ in $R$-Mod with
$\mathcal X\subseteq \mathcal B = \mathcal{GP}_n$: $$(\mathcal P_m, \ \mathcal P_m^\perp), \ \ \ \ (\mathcal {PGF}_m, \ \mathcal P_m^\perp\cap \mathcal {PGF}^\perp),$$
where $0\le m\le n$ (see Lemma \ref{ctp7.4.6} and Corollary \ref{mainthm2}), one gets

\begin{cor} \label{mainthm6} \ Let $R$ be a ring,  $m$ and $n$ non-negative integers with $m\le n$. Then

\vskip5pt

{\rm (1)} \ The pair \ $(\mathcal{P}_m, \ \mathcal P_m^\perp\cap \mathcal{GP}_n)$ is a complete and hereditary cotorsion pair in weakly idempotent complete exact category $\mathcal{GP}_n$, with
kernel $\mathcal{P}_m\cap\mathcal{P}_m^{\perp}$.

\vskip5pt

{\rm (2)} \ The pair \ $(\mathcal{PGF}_m, \ \mathcal P_m^\perp\cap \mathcal {PGF}^\perp\cap \mathcal{GP}_n)$ is a complete and hereditary cotorsion pair in  $\mathcal{GP}_n$, with
kernel $\mathcal{P}_m\cap\mathcal{P}_m^{\perp}$.

\vskip5pt

{\rm (3)}  \ The triple \ $(\mathcal{PGF}_m, \ \mathcal P_m^\perp\cap\mathcal{G P}_n, \ \mathcal{PGF}^\perp\cap \mathcal{GP}_n)$ is a hereditary Hovey triple in $\mathcal{G P}_n$.
The corresponding homotopy category is
\ $(\mathcal{PGF}_m\cap \mathcal P_m^\perp) /(\mathcal{P}_m\cap \mathcal P_m^\perp)\cong \mathcal{PGF}/\mathcal{P}.$
\end{cor}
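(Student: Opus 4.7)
The plan is to realize Corollary \ref{mainthm6} as a direct application of Theorem \ref{inducedctp1}, followed by a bookkeeping step for the Hovey triple and a comparison with the $m=0$ model structure to identify the homotopy category. First I would record that $\mathcal{GP}_n$ is closed under extensions, direct summands, and kernels of epimorphisms in $R\mbox{-}\mathrm{Mod}$ (standard consequences of Auslander--Buchweitz / Holm's machinery already invoked in the paper), so that $\mathcal{GP}_n$ is a weakly idempotent complete exact category satisfying the hypotheses of Theorem \ref{inducedctp1}.

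For part (1) I would apply Theorem \ref{inducedctp1} to the complete and hereditary cotorsion pair $(\mathcal{P}_m,\mathcal{P}_m^\perp)$ of Lemma \ref{ctp7.4.6}, using that $\mathcal{P}_m\subseteq\mathcal{P}_n\subseteq\mathcal{GP}_n$ whenever $m\le n$. For part (2) I would apply Theorem \ref{inducedctp1} to the complete and hereditary cotorsion pair $(\mathcal{PGF}_m,\mathcal{P}_m^\perp\cap\mathcal{PGF}^\perp)$ of Corollary \ref{mainthm2}, using the inclusions $\mathcal{PGF}_m\subseteq\mathcal{GP}_m\subseteq\mathcal{GP}_n$. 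The kernel computations then reduce, in both cases, to $\mathcal{P}_m\cap\mathcal{P}_m^\perp$: in (1) because $\mathcal{P}_m\subseteq\mathcal{GP}_n$, and in (2) because Theorem \ref{mainthm1} gives $\mathcal{PGF}_m\cap\mathcal{PGF}^\perp=\mathcal{P}_m$, after which the intersection with $\mathcal{P}_m^\perp\cap\mathcal{GP}_n$ collapses to $\mathcal{P}_m\cap\mathcal{P}_m^\perp$.

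For part (3), set $(\mathcal C,\mathcal F,\mathcal W)=(\mathcal{PGF}_m,\ \mathcal{P}_m^\perp\cap\mathcal{GP}_n,\ \mathcal{PGF}^\perp\cap\mathcal{GP}_n)$. Thickness of $\mathcal W$ in the exact category $\mathcal{GP}_n$ follows from the thickness of $\mathcal{PGF}^\perp$ in $R\mbox{-}\mathrm{Mod}$ (Lemma \ref{ctpPGF}) together with closure of $\mathcal{GP}_n$ under direct summands, since admissible short exact sequences in $\mathcal{GP}_n$ are in particular short exact in $R\mbox{-}\mathrm{Mod}$. Using Theorem \ref{mainthm1} I would identify $\mathcal C\cap\mathcal W=\mathcal{PGF}_m\cap\mathcal{PGF}^\perp=\mathcal{P}_m$, so that $(\mathcal C\cap\mathcal W,\mathcal F)$ is precisely the cotorsion pair of (1), while $(\mathcal C,\mathcal F\cap\mathcal W)$ is precisely that of (2); this gives a hereditary Hovey triple.

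Finally, I would invoke Theorem \ref{hoherhov} to describe the homotopy category as $(\mathcal C\cap\mathcal F)/(\mathcal C\cap\mathcal F\cap\mathcal W)$; since $\mathcal{PGF}_m\subseteq\mathcal{GP}_n$, this equals $(\mathcal{PGF}_m\cap\mathcal{P}_m^\perp)/(\mathcal{P}_m\cap\mathcal{P}_m^\perp)$. For the triangle equivalence with $\mathcal{PGF}/\mathcal{P}$ I would apply Corollary \ref{wdeterminetriho} to the Hovey triples on $\mathcal{GP}_n$ obtained from $m$ and from $m=0$; they share the same class of trivial objects $\mathcal{PGF}^\perp\cap\mathcal{GP}_n$, and for $m=0$ one has $\mathcal{P}_0^\perp=R\mbox{-}\mathrm{Mod}$ and $\mathcal C\cap\mathcal F=\mathcal{PGF}\cap\mathcal{GP}_n=\mathcal{PGF}$ with projective--injectives $\mathcal{PGF}\cap\mathcal{PGF}^\perp=\mathcal P$, giving stable category $\mathcal{PGF}/\mathcal P$. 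The only genuinely delicate point I anticipate is verifying, cleanly, that thickness of $\mathcal W$ and the kernel collapses use only tools already available in the paper; once Theorem \ref{mainthm1} is in hand the argument is essentially formal.
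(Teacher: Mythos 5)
Your proposal is correct and follows essentially the same route as the paper: both deduce (1) and (2) from Theorem \ref{inducedctp1} applied to $(\mathcal{P}_m,\mathcal{P}_m^\perp)$ and $(\mathcal{PGF}_m,\mathcal{P}_m^\perp\cap\mathcal{PGF}^\perp)$ inside $\mathcal{GP}_n$, compute the kernels via $\mathcal{PGF}_m\cap\mathcal{PGF}^\perp=\mathcal{P}_m$, and obtain (3) by checking thickness of $\mathcal{PGF}^\perp\cap\mathcal{GP}_n$ and then invoking Theorem \ref{hoherhov} together with Corollary \ref{wdeterminetriho} against the $m=0$ Hovey triple $(\mathcal{PGF},\mathcal{GP}_n,\mathcal{PGF}^\perp\cap\mathcal{GP}_n)$.
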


\begin{proof} \ Note that $\mathcal{G P}_n$ is closed under extensions, the kernels of epimorphisms,  and direct summands (cf. \cite[Theorem 2.20, Proposition 2.19]{Ho}).
In particular, $\mathcal{G P}_n$ is a weakly idempotent complete exact category.

\vskip 5pt

(1) \ Applying Theorem \ref{inducedctp1} to $\mathcal B = \mathcal{GP}_n$ and the complete and hereditary cotorsion pair $(\mathcal P_m, \ \mathcal P_m^\perp)$ in $R$-Mod,
one gets the assertion.

\vskip 5pt

(2) \ Applying Theorem \ref{inducedctp1} to $\mathcal{GP}_n$ and the complete and hereditary cotorsion pair $(\mathcal {PGF}_m, \ \mathcal P_m^\perp\cap \mathcal {PGF}^\perp)$ in $R$-Mod, one gets
the assertion. Since $\mathcal{PGF}_m \cap\mathcal{PGF}^\perp = \mathcal P_m$ (cf. Theorem \ref{mainthm1}), the kernel of this cotorsion pair is
$\mathcal{P}_m\cap\mathcal P_m^\perp$.

\vskip 5pt

(3) \ Since $\mathcal{PGF}^\perp$ is thick in $R$-Mod (cf. Lemma \ref{ctpPGF}),  $\mathcal{PGF}^\perp\cap \mathcal{GP}_n$ is thick in $\mathcal{GP}_n$.
By (1) and (2), \ $(\mathcal{PGF}_m, \ \mathcal P_m^\perp\cap\mathcal{G P}_n, \ \mathcal{PGF}^\perp\cap \mathcal{GP}_n)$ is a hereditary Hovey triple in $\mathcal{G P}_n$.
The last assertion follows from Theorem \ref{hoherhov} and Corollary \ref{wdeterminetriho}, by using that
$(\mathcal{PGF}, \ \mathcal{G P}_n, \ \mathcal{PGF}^\perp\cap \mathcal{GP}_n)$ is also a hereditary Hovey triple in $\mathcal{GP}_n$.
\end{proof}

\subsection{A new chain of cotorsion pairs and exact model structures on $\mathcal{G P}_n$}
It is quite surprising that for $0\le m\leq n$, \ $(\mathcal{GP}_m,\mathcal{P}_m^\perp\cap\mathcal{GP}^\perp\cap \mathcal{GP}_n)=(\mathcal{GP}_m,\mathcal{P}_m^\perp\cap \mathcal{P}_n)$
is always a complete and hereditary cotorsion pair in $\mathcal{GP}_n$ (see Theorem \ref{gpctp}), since in general
we don't know whether $(\mathcal{GP}_m,\mathcal{P}_m^\perp\cap\mathcal{GP}^\perp)$ is a cotorsion pair in $R\mbox{-}{\rm Mod}$ for an arbitrary ring $R$. Thus, Theorem \ref{gpctp}
is not an application of Theorem \ref{inducedctp1} and Theorem \ref{mainthm4}. (Note that Theorem \ref{mainthm4} needs the assumption of Artin algebra.)

\vskip 5pt

\begin{lem}\label{lemgp} \ Let $R$ be a ring. Then  $^\perp\left(\mathcal P_m^\perp\cap\mathcal{P}_n\right)\cap \mathcal{P}_n= \mathcal{P}_m$ in $R\mbox{-}{\rm Mod}$ for $0\leq m\leq n$.
\end{lem}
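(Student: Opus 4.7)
The plan is a straightforward two-inclusion argument, relying on the completeness of the cotorsion pair $(\mathcal P_m,\mathcal P_m^\perp)$ (Lemma \ref{ctp7.4.6}) and the fact that $\mathcal P_n$ is closed under kernels of epimorphisms.

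First, for the easy inclusion $\mathcal P_m\subseteq \ ^\perp(\mathcal P_m^\perp\cap\mathcal P_n)\cap\mathcal P_n$, I observe that $m\le n$ gives $\mathcal P_m\subseteq\mathcal P_n$; and since $(\mathcal P_m,\mathcal P_m^\perp)$ is a cotorsion pair in $R$-Mod, every $M\in\mathcal P_m$ satisfies $\Ext^1_R(M,Y)=0$ for all $Y\in\mathcal P_m^\perp$, in particular for all $Y\in\mathcal P_m^\perp\cap\mathcal P_n$.

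For the non-trivial inclusion, I take $M\in \ ^\perp(\mathcal P_m^\perp\cap\mathcal P_n)\cap\mathcal P_n$ and wish to show $M\in\mathcal P_m$. Using the completeness of $(\mathcal P_m,\mathcal P_m^\perp)$, I pick a special precover
\[
0\longrightarrow Y\longrightarrow X\longrightarrow M\longrightarrow 0
\]
with $X\in\mathcal P_m$ and $Y\in\mathcal P_m^\perp$. The key step is to argue that $Y\in\mathcal P_n$: from the long exact sequence of $\Ext$, for any module $N$,
\[
\Ext^{n+1}_R(X,N)\longrightarrow \Ext^{n+1}_R(Y,N)\longrightarrow \Ext^{n+2}_R(M,N),
\]
where the outer two terms vanish since $X\in\mathcal P_m\subseteq\mathcal P_n$ and $M\in\mathcal P_n$. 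Hence $\mathrm{pd}\,Y\le n$, so $Y\in\mathcal P_m^\perp\cap\mathcal P_n$.

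By the hypothesis on $M$, $\Ext^1_R(M,Y)=0$, so the short exact sequence splits. Then $M$ is a direct summand of $X\in\mathcal P_m$, and since $\mathcal P_m$ is closed under direct summands, $M\in\mathcal P_m$. The only step requiring a moment's thought is the syzygy-closure of $\mathcal P_n$ used to place $Y$ inside $\mathcal P_n$; everything else is a routine application of completeness and the splitting criterion.
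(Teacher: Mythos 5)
Your proof is correct and follows essentially the same route as the paper: both use completeness of $(\mathcal P_m,\mathcal P_m^\perp)$ to produce a special precover $0\to Y\to X\to M\to 0$, show $Y\in\mathcal P_m^\perp\cap\mathcal P_n$, and conclude the sequence splits. The only cosmetic difference is that you re-derive the closure of $\mathcal P_n$ under kernels of epimorphisms from the long exact $\operatorname{Ext}$ sequence, whereas the paper simply invokes that standard fact.
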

\begin{proof}
Clearly $\mathcal{P}_m\subseteq \ ^\perp\left(\mathcal P_m^\perp\cap\mathcal{P}_n\right)\cap \mathcal{P}_n$. For the other hand, let $M\in \ ^\perp\left(\mathcal P_m^\perp\cap\mathcal{P}_n\right)\cap \mathcal{P}_n$. Since \ $(\mathcal{P}_m, \ \mathcal{P}_m^{\perp})$ is a complete cotorsion pair (cf. Lemma \ref{ctp7.4.6}),
there is an exact sequence  $0\longrightarrow N \longrightarrow H\longrightarrow  M\longrightarrow 0$
with  $H\in \mathcal{P}_m$ and $N\in \mathcal{P}_m^{\perp}$. Since $\mathcal{P}_n$ is closed under kernels of epimorphisms, $N\in \mathcal{P}_n$. Thus $N\in \mathcal P_m^\perp\cap\mathcal{P}_n$ and this exact sequence splits. It follows that $M\in \mathcal{P}_m$.
\end{proof}

\begin{thm}\label{gpctp}  \ Let $R$ be a ring, $m$ and $n$ non-negative integers with $m\le n$. Then

\vskip5pt

{\rm (1)} \ The pair $(\mathcal{GP}_m, \ \mathcal P_m^\perp\cap\mathcal{P}_n)$ is a complete and hereditary cotorsion pair in $\mathcal{G P}_n$, with
kernel $\mathcal{P}_m\cap\mathcal{P}_m^{\perp}$.

\vskip5pt

{\rm (2)} \ The triple \ $(\mathcal{G P}_m, \ \mathcal P_m^\perp\cap\mathcal{G P}_n, \ \mathcal P_{n})$ is a hereditary Hovey triple in $\mathcal{G P}_n$.
The corresponding homotopy category is
\ $(\mathcal{G P}_m\cap \mathcal P_m^\perp) /(\mathcal{P}_m\cap \mathcal P_m^\perp)\cong \mathcal{G P}/\mathcal{P}.$
\end{thm}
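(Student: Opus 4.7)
The plan is to prove (1) directly and then deduce (2) by combining it with Corollary \ref{mainthm6}(1). Throughout, $m\le n$, the Ext-groups in the exact category $\mathcal{GP}_n$ coincide with those in $R$-Mod, and I will repeatedly use the preliminary identity $\mathcal{GP}_m\cap\mathcal{P}_n=\mathcal{P}_m$, which follows from ${\rm pd}\,N={\rm Gpd}\,N$ whenever ${\rm pd}\,N<\infty$ (cf.\ \cite[Proposition 2.27]{Ho}).

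For the orthogonality $\Ext^1_R(X,Y)=0$ with $X\in\mathcal{GP}_m$ and $Y\in\mathcal{P}_m^\perp\cap\mathcal{P}_n$, I invoke Theorem \ref{mainthm3}$(1)\Rightarrow(2)$ with $n$ replaced by $m$, obtaining $0\to K\to G\to X\to 0$ with $G\in\mathcal{GP}$ and $K\in\mathcal{P}_{m-1}$ which remains $\Hom_R(-,Y)$-exact for every $Y\in\mathcal{P}_m^\perp$, and combine this with $\Ext^1_R(G,Y)=0$ (since $G\in\mathcal{GP}$ and $Y\in\mathcal{P}^{<\infty}$). For the left-approximation half of completeness, given $M\in\mathcal{GP}_n$, Lemma \ref{basic} yields $0\to K\to G\to M\to 0$ with $G\in\mathcal{GP}$ and $K\in\mathcal{P}_{n-1}$; the complete cotorsion pair $(\mathcal{P}_m,\mathcal{P}_m^\perp)$ in $R$-Mod yields $0\to K\to F\to L\to 0$ with $F\in\mathcal{P}_m^\perp$ and $L\in\mathcal{P}_m\subseteq\mathcal{P}_n$, forcing $F\in\mathcal{P}_n$; and the pushout along $K\to G$ and $K\to F$ produces $0\to F\to H\to M\to 0$, where $H$ is an extension of $L\in\mathcal{P}_m$ by $G\in\mathcal{GP}$, hence $H\in\mathcal{GP}_m$.

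The main obstacle is the right-approximation sequence. Applying $(\mathcal{P}_m,\mathcal{P}_m^\perp)$ directly to $M$ gives $M\hookrightarrow F$ with $F\in\mathcal{P}_m^\perp\cap\mathcal{GP}_n$, but $F$ need not lie in $\mathcal{P}_n$: indeed the optimistic inclusion $\mathcal{GP}_n\cap\mathcal{P}_m^\perp\subseteq\mathcal{P}_n$ already fails at $m=0$. The remedy is first to invoke Theorem \ref{mainthm3}$(1)\Rightarrow(4)$, which is valid over an arbitrary ring, obtaining $0\to M\to L\to G'\to 0$ with $L\in\mathcal{P}_n$ and $G'\in\mathcal{GP}$, and only then to apply the induced complete cotorsion pair $(\mathcal{P}_m,\mathcal{P}_m^\perp\cap\mathcal{P}_n)$ on the exact category $\mathcal{P}_n$ (via Theorem \ref{inducedctp1}) to $L$, getting $0\to L\to Y\to X_L\to 0$ with $Y\in\mathcal{P}_m^\perp\cap\mathcal{P}_n$ and $X_L\in\mathcal{P}_m$. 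The composite monomorphism $M\hookrightarrow L\hookrightarrow Y$ then yields $0\to M\to Y\to Y/M\to 0$, and the exact sequence $0\to L/M\to Y/M\to Y/L\to 0$, i.e.\ $0\to G'\to Y/M\to X_L\to 0$, shows $Y/M\in\mathcal{GP}_m$.

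Orthogonality together with both approximations gives the cotorsion pair by the usual summand-of-approximation argument; heredity is immediate since $\mathcal{GP}_m$ is closed under kernels of deflations. The kernel is $\mathcal{GP}_m\cap\mathcal{P}_m^\perp\cap\mathcal{P}_n=\mathcal{P}_m\cap\mathcal{P}_m^\perp$. For (2), $\mathcal{W}=\mathcal{P}_n$ is thick in $\mathcal{GP}_n$ by Lemma \ref{pnthickinpgfn}; the cotorsion pair $(\mathcal{C}\cap\mathcal{W},\mathcal{F})=(\mathcal{P}_m,\mathcal{P}_m^\perp\cap\mathcal{GP}_n)$ is exactly Corollary \ref{mainthm6}(1), while $(\mathcal{C},\mathcal{F}\cap\mathcal{W})=(\mathcal{GP}_m,\mathcal{P}_m^\perp\cap\mathcal{P}_n)$ is (1). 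Theorem \ref{hoherhov} identifies the homotopy category as $(\mathcal{GP}_m\cap\mathcal{P}_m^\perp)/(\mathcal{P}_m\cap\mathcal{P}_m^\perp)$, and Corollary \ref{wdeterminetriho} applied to the $m=0$ instance of the Hovey triple, namely $(\mathcal{GP},\mathcal{GP}_n,\mathcal{P}_n)$ whose associated Frobenius category is $\mathcal{GP}$ with projective-injective class $\mathcal{P}$, yields the triangle equivalence with $\mathcal{GP}/\mathcal{P}$.
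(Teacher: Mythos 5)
Your proof is correct. The overall shape is the same as the paper's — establish orthogonality, build both approximation sequences, deduce the cotorsion pair, then assemble the Hovey triple from Corollary \ref{mainthm6}(1) and part (1) — but you make two substantive economies in the execution of part (1). First, where the paper separately proves the equality $^\perp(\mathcal P_m^\perp\cap\mathcal{P}_n)\cap\mathcal{GP}_n=\mathcal{GP}_m$ by a pushout argument that invokes Lemma \ref{lemgp}, you skip this entirely and obtain the cotorsion-pair property from the two approximations via the standard direct-summand argument (legitimate, since $\mathcal{GP}_m$ and $\mathcal P_m^\perp\cap\mathcal P_n$ are both closed under summands). Second, for the left approximation $0\to M\to Y\to Y/M\to 0$ the paper builds $Y$ as a pushout out of $0\to M\to N\to H\to 0$ along the left-approximation of $N$ in $(\mathcal P_m,\mathcal P_m^\perp\cap\mathcal{GP}_n)$, whereas you instead compose two monomorphisms $M\hookrightarrow L\hookrightarrow Y$ (using the induced cotorsion pair on $\mathcal P_n$ for the second), extracting $Y/M$ from the short exact sequence $0\to G'\to Y/M\to X_L\to 0$; the conclusion is the same. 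Your route avoids Lemma \ref{lemgp}, at the modest cost of spelling out the summand closure; the paper's route gives the two orthogonality identities explicitly as standalone statements, which is slightly more informative. Part (2) is handled identically in both, and your identification of $\mathcal{GP}/\mathcal P$ via the $m=0$ triple $(\mathcal{GP},\mathcal{GP}_n,\mathcal P_n)$ and Corollary \ref{wdeterminetriho} matches the paper.
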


\begin{proof} \ {\rm (1)} \ To prove that $(\mathcal{GP}_m, \ \mathcal P_m^\perp\cap\mathcal{P}_n)$ is a cotorsion pair in $\mathcal{G P}_n$, by definition it suffices to prove that  $\mathcal{GP}_m^\perp \cap \mathcal{G P}_n=\mathcal P_m^\perp\cap\mathcal{P}_n$ and \ $^\perp\left(\mathcal P_m^\perp\cap\mathcal{P}_n\right)\cap \mathcal{G P}_n= \mathcal{GP}_m$.

\vskip5pt

Since $\mathcal P_m^\perp\cap\mathcal{P}_n\subseteq\mathcal{P}_m^{\perp}\cap \mathcal {GP}^{\perp}$, it follows from Theorem \ref{mainthm3}(3) that
$${\rm Ext}^{1}_{\mathcal{GP}_n}(\mathcal{GP}_m, \ \mathcal P_m^\perp\cap\mathcal{P}_n)={\rm Ext}^{1}_{R}(\mathcal{GP}_m, \ \mathcal P_m^\perp\cap\mathcal{P}_n)=0.$$ It follows that $ \mathcal P_m^\perp\cap\mathcal{P}_n\subseteq\mathcal{GP}_m^\perp\cap \mathcal{G P}_n $ and $\mathcal{GP}_m\subseteq\ ^\perp\left(\mathcal P_m^\perp\cap\mathcal{P}_n\right)\cap \mathcal{G P}_n$.

\vskip5pt

 Since $\mathcal{GP}_m^\perp\subseteq \mathcal{P}_m^\perp\cap \mathcal{GP}^\perp$ and $\mathcal{GP}^\perp \cap \mathcal{G P}_n=\mathcal{P}_n$(cf. Theorem \ref{mainthm3}), it follows that $\mathcal{GP}_m^\perp \cap \mathcal{G P}_n\subseteq \mathcal{P}_m^\perp \cap\mathcal{GP}^\perp \cap \mathcal{G P}_n= \mathcal P_m^\perp\cap\mathcal{P}_n$, and hence $\mathcal{GP}_m^\perp \cap \mathcal{G P}_n=\mathcal P_m^\perp\cap\mathcal{P}_n$.

 \vskip5pt

It remains to prove $^\perp\left(\mathcal P_m^\perp\cap\mathcal{P}_n\right)\cap \mathcal{G P}_n \subseteq \mathcal{GP}_m$. Thus one can assume that $n\ge 1$. Let $M\in \ ^\perp\left(\mathcal P_m^\perp\cap\mathcal{P}_n\right)\cap \mathcal{G P}_n$. By Lemma \ref{basic} there is an admissible exact sequence $0\longrightarrow K \longrightarrow G \longrightarrow M \longrightarrow 0$
with $G \in \mathcal{GP}$ and $K\in \mathcal{P}_{n-1}$. Since $G$ is Gorenstein projective, by definition there is
an admissible exact sequence $0\longrightarrow G \longrightarrow P \longrightarrow G^{\prime} \longrightarrow 0$ with $P\in\mathcal P$  and $G^\prime \in \mathcal{GP}$.
Take the pushout

\[
\xymatrix@C=30pt@R=0.6cm{
  & & 0\ar[d]  & 0\ar[d] \\
0\ar[r] & K\ar[r]\ar@{=}[d]  & G\ar[r]\ar[d]  &  M\ar[r]\ar@{.>}[d] & 0\\
0\ar[r] & K \ar[r]  & P\ar@{.>}[r]\ar[d]  & H\ar[r]\ar[d] & 0\\
& &G^\prime \ar[d]\ar@{=}[r] & G^\prime\ar[d]\\
& & 0 &0 }
\]
Then $H \in \mathcal{P}_n$. Since $G^\prime \in \mathcal{GP}\subseteq\ ^\perp\left(\mathcal P_m^\perp\cap\mathcal{P}_n\right)$ and the class $^\perp\left(\mathcal P_m^\perp\cap\mathcal{P}_n\right)$ is closed under extensions, it follows that $H\in\ ^\perp\left(\mathcal P_m^\perp\cap\mathcal{P}_n\right)$. Thus $H\in\ ^\perp\left(\mathcal P_m^\perp\cap\mathcal{P}_n\right)\cap \mathcal{P}_n=\mathcal{P}_m$ (cf. Lemma \ref{lemgp}), and hence $M\in \mathcal{GP}_m$
by Theorem \ref{mainthm3} $(4)\Longrightarrow (1)$.

\vskip5pt

Up to now we have proved that $(\mathcal{GP}_m, \ \mathcal P_m^\perp\cap\mathcal{P}_n)$ is a cotorsion pair in $\mathcal{G P}_n$.
Now we prove that it is complete.
By Lemma \ref{basic}, there is an admissible exact sequence
$$0 \longrightarrow N \longrightarrow H \longrightarrow M \longrightarrow 0$$ with $H\in \mathcal{GP}$ and $N\in \mathcal{P}_{n-1}$. (If $n = 0$ then $K = 0$.)
Since $(\mathcal{P}_m, \  \mathcal{P}_m^{\perp}\cap \mathcal{GP}_n)$ is a complete cotorsion pair in $\mathcal{GP}_n$, there is an admissible exact sequence
$$0 \longrightarrow N \longrightarrow L \longrightarrow C \longrightarrow 0$$ with $L\in \mathcal{P}_m^{\perp}\cap \mathcal{GP}_n$ and $C\in  \mathcal {P}_m$.
Take the pushout

\[
\xymatrix@C=30pt@R=0.6cm{& 0\ar[d]  & 0\ar[d] \\
0\ar[r] & N\ar[r]\ar[d]  & H\ar[r]\ar@{.>}[d]  &  M\ar[r]\ar@{=}[d] & 0\\
0\ar[r] & L \ar@{.>}[r]\ar[d]  & G\ar[r]\ar[d]  & M\ar[r] & 0\\
& C \ar[d]\ar@{=}[r] & C\ar[d]\\
& 0 &0}
\]
Since $N\in \mathcal{P}_n$ and $C\in \mathcal {P}_n$, it follows that $L\in \mathcal{P}_n$, and hence
$L\in \mathcal{P}_m^{\perp} \cap \mathcal {P}_n$. Since $H\in \mathcal{GP}\subseteq \mathcal{G P}_m$  and $C\in \mathcal {P}_m \subseteq \mathcal{G P}_m$, it follows that
$G\in \mathcal{G P}_m$. Thus we get an admissible exact sequence
$$0 \longrightarrow L \longrightarrow G \longrightarrow M \longrightarrow 0$$ with $G\in \mathcal{G P}_m$ and $L\in \mathcal P_m^\perp\cap\mathcal{P}_n$.

\vskip5pt

Now we show that for $M\in \mathcal{GP}_n$, there is an admissible exact sequence
$$0 \longrightarrow M \longrightarrow L \longrightarrow G \longrightarrow 0$$ with $L\in \mathcal P_m^\perp\cap\mathcal{P}_n$ and $G\in \mathcal{G P}_m$.
By Theorem \ref{mainthm3} $(1)\Longrightarrow(4)$, there is an admissible exact sequence
$$0 \longrightarrow M \longrightarrow N \longrightarrow H \longrightarrow 0$$ with $N\in \mathcal{P}_{n}$ and $H\in \mathcal{GP}$. Since $(\mathcal{P}_m, \ \mathcal{P}_m^{\perp}\cap \mathcal{GP}_n)$ is a complete cotorsion pair in $\mathcal{GP}_n$ (cf. Corollary \ref{mainthm6}(1)), there is an admissible exact sequence
$$0 \longrightarrow N \longrightarrow L \longrightarrow C \longrightarrow 0$$ with $L\in \mathcal{P}_m^{\perp}\cap \mathcal{GP}_n$ and $C\in  \mathcal {P}_m$.
Take the pushout
\[
\xymatrix@C=30pt@R=0.6cm{
  & & 0\ar[d]  & 0\ar[d] \\
0\ar[r] & M\ar[r]\ar@{=}[d]  & N\ar[r]\ar[d]  &  H\ar[r]\ar@{.>}[d] & 0\\
0\ar[r] & M \ar[r]  & L\ar@{.>}[r]\ar[d]  & G\ar[r]\ar[d] & 0\\
& &C \ar[d]\ar@{=}[r] & C\ar[d]\\
& & 0 &0 }
\]
Since $N\in \mathcal{P}_n$ and $C\in \mathcal {P}_n$, it follows that $L\in \mathcal{P}_n$, and hence
$L\in \mathcal{P}_m^{\perp} \cap \mathcal {P}_n$. Since $H\in \mathcal{GP}\subseteq \mathcal{G P}_m$  and $C\in \mathcal {P}_m \subseteq \mathcal{G P}_m$, it follows that
$G\in \mathcal{G P}_m$. We are done. That is, $(\mathcal{GP}_m, \ \mathcal P_m^\perp\cap\mathcal{P}_n)$ is a complete cotorsion pair in $\mathcal{G P}_n$.

\vskip 5pt

The heredity of this cotorsion pair follows from the fact that $\mathcal{GP}_m$ is closed under kernels of deflations in $\mathcal{GP}_n$.
Since $\mathcal{GP}_m  \cap\mathcal {P}_n = \mathcal{P}_m,$  it follows that the kernel of this cotorsion pair is  $\mathcal{P}_m\cap\mathcal{P}_m^{\perp}$.

\vskip 5pt

{\rm (2)} \ Since $\mathcal{P}_n$ is thick in $\mathcal{GP}_n$ (cf. Lemma \ref{pnthickinpgfn}),
it follows from (1) and Corollary \ref{mainthm6}(1) that
\ $(\mathcal{G P}_m, \ \mathcal P_m^\perp\cap\mathcal{G P}_n, \ \mathcal P_{n})$ is a hereditary Hovey triple in $\mathcal{G P}_n$.
The last assertion follows from Theorem \ref{hoherhov} and Corollary \ref{wdeterminetriho}, by using that
$(\mathcal{GP}, \ \mathcal{G P}_n, \ \mathcal{P}_n)$ is also a hereditary Hovey triple in $\mathcal{GP}_n$.
\end{proof}

\subsection{A sufficient and necessary condition for $\mathcal{PGF} = \mathcal {GP}$}

\begin{cor} \ Let $R$ be a ring. Then the following are equivalent$:$

\vskip5pt

{\rm (1)} \ $\mathcal{PGF}=\mathcal{GP}.$

\vskip5pt

{\rm (2)} \ $\mathcal{PGF}^\perp\cap\mathcal{GP}_n=\mathcal P_n$ for any non-negative integer $n$.

\vskip5pt

{\rm (3)} \ $\mathcal{PGF}^\perp\cap\mathcal{GP}_n=\mathcal P_n$ for some non-negative integer $n$.
\end{cor}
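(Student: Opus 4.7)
The implication $(2)\Longrightarrow(3)$ is trivial, and $(1)\Longrightarrow(2)$ is immediate: if $\mathcal{PGF}=\mathcal{GP}$ then $\mathcal{PGF}^\perp=\mathcal{GP}^\perp$ and $\mathcal{PGF}_n=\mathcal{GP}_n$, so the desired identity becomes $\mathcal{PGF}^\perp\cap\mathcal{PGF}_n=\mathcal{P}_n$, which is part of Theorem \ref{mainthm1}. So the content is $(3)\Longrightarrow(1)$.

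The plan for $(3)\Longrightarrow(1)$ is to show $\mathcal{GP}\subseteq\mathcal{PGF}$ (the reverse inclusion is automatic). Fix $n$ with $\mathcal{PGF}^\perp\cap\mathcal{GP}_n=\mathcal P_n$ and take $G\in\mathcal{GP}$. The key tool is the complete cotorsion pair $(\mathcal{PGF},\mathcal{PGF}^\perp)$ from Lemma \ref{ctpPGF}: it provides a short exact sequence
\[
0\longrightarrow L\longrightarrow F\longrightarrow G\longrightarrow 0
\]
with $F\in\mathcal{PGF}$ and $L\in\mathcal{PGF}^\perp$.

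The next step is to locate $L$. Since $F\in\mathcal{PGF}\subseteq\mathcal{GP}$ and $G\in\mathcal{GP}$, and $\mathcal{GP}$ is closed under kernels of epimorphisms, we get $L\in\mathcal{GP}\subseteq\mathcal{GP}_n$. Combined with $L\in\mathcal{PGF}^\perp$ and the hypothesis, this gives $L\in\mathcal{PGF}^\perp\cap\mathcal{GP}_n=\mathcal P_n$. Thus $L$ is Gorenstein projective of finite projective dimension, and the standard fact $\mathrm{Gpd}=\mathrm{pd}$ when the projective dimension is finite (\cite[Proposition 2.27]{Ho}) forces $L\in\mathcal P$.

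Finally, since $L$ is projective and $G$ is Gorenstein projective, $\operatorname{Ext}^1_R(G,L)=0$, so the sequence splits. Hence $G$ is a direct summand of $F\in\mathcal{PGF}$, and closure of $\mathcal{PGF}$ under direct summands (it is the left part of a cotorsion pair, cf.\ \cite[Proposition 2.3]{DE}) yields $G\in\mathcal{PGF}$. No serious obstacle is anticipated: the argument is a single application of the PGF special precover together with the structural inputs already established earlier in the paper.
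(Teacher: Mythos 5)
Your proof is correct, but it takes a genuinely different route from the paper for the nontrivial implication $(3)\Longrightarrow(1)$. The paper argues by comparing cotorsion pairs in the exact category $\mathcal{GP}_n$: taking $m=0$ in Corollary~\ref{mainthm6}(2) and in Theorem~\ref{gpctp}(1), it obtains two complete cotorsion pairs $(\mathcal{PGF},\ \mathcal{PGF}^\perp\cap\mathcal{GP}_n)=(\mathcal{PGF},\ \mathcal P_n)$ and $(\mathcal{GP},\ \mathcal P_n)$ in $\mathcal{GP}_n$ under the hypothesis, and concludes $\mathcal{PGF}=\mathcal{GP}$ since both are the left orthogonal of $\mathcal P_n$ inside $\mathcal{GP}_n$. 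You instead work entirely in $R$\mbox{-}Mod with an element-wise argument: a special $\mathcal{PGF}$-precover of $G\in\mathcal{GP}$ gives a kernel $L$ lying in $\mathcal{PGF}^\perp\cap\mathcal{GP}\subseteq\mathcal{PGF}^\perp\cap\mathcal{GP}_n=\mathcal P_n$, whence $L$ is a Gorenstein projective of finite projective dimension and therefore projective, so the sequence splits and $G$ is a summand of a PGF module. What the paper's approach buys is conceptual economy given the machinery already assembled (it is essentially a one-liner once Corollary~\ref{mainthm6} and Theorem~\ref{gpctp} are in hand); what yours buys is self-containedness, needing only Lemma~\ref{ctpPGF}, closure of $\mathcal{GP}$ under kernels of epimorphisms and direct summands, and the classical fact that $\mathcal{GP}\cap\mathcal P^{<\infty}=\mathcal P$. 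Your treatment of $(1)\Longrightarrow(2)$ via Theorem~\ref{mainthm1} is also fine and slightly different from the paper's appeal to Theorem~\ref{mainthm3}, though the two are clearly interchangeable.
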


\begin{proof}
$(1)\Longrightarrow (2)$: \ By Theorem \ref{mainthm3} one has $\mathcal{PGF}^\perp\cap\mathcal{GP}_n=\mathcal{GP}^\perp\cap\mathcal{GP}_n=\mathcal P_n$, for any non-negative integer $n$.

\vskip5pt

$(2)\Longrightarrow (3)$: \ This is trivial.

\vskip5pt

$(3)\Longrightarrow (1)$: \ Suppose that $\mathcal{PGF}^\perp\cap\mathcal{GP}_n=\mathcal P_n$ for some non-negative $n$. By Corollary \ref{mainthm6}(2) and Theorem \ref{gpctp}(1), there are two cotorsion pairs $(\mathcal{PGF}, \ \mathcal{PGF}^\perp\cap\mathcal{GP}_n)=(\mathcal{PGF}, \ \mathcal P_{n})$ and $(\mathcal{GP},\ \mathcal P_{n})$ in $\mathcal{GP}_n$. Thus $\mathcal{PGF}=\mathcal{GP}$.
\end{proof}

\subsection{Exact model structures on $\mathcal{G P}^{<\infty}$}
It is clear that $\mathcal P_m\cap \mathcal{GP}^{<\infty} = \mathcal P_{m}$.
Also note that $\mathcal{GP}_m\cap \mathcal P^{<\infty} = \mathcal P_{m}$ (cf. \cite[Proposition 2.27]{Ho}).
Similar to Corollary \ref{mainthm6} and Theorem \ref{gpctp} one has

\begin{cor} \label{mainthm7} \ \ Let $R$ be a ring,  and $m$ non-negative integers. Then

\vskip5pt

{\rm (1)} \ The pair \ $(\mathcal{P}_m, \ \mathcal P_m^\perp\cap \mathcal{GP}^{<\infty})$ is a complete and hereditary cotorsion pair in weakly idempotent complete exact category $\mathcal{GP}^{<\infty}$, with
kernel $\mathcal{P}_m\cap\mathcal{P}_m^{\perp}$.

\vskip5pt

{\rm (2)} \ The pair \ $(\mathcal{PGF}_m, \ \mathcal P_m^\perp\cap \mathcal {PGF}^\perp\cap \mathcal{GP}^{<\infty})$ is a complete and hereditary cotorsion pair in  $\mathcal{GP}^{<\infty}$, with
kernel $\mathcal{P}_m\cap\mathcal{P}_m^{\perp}$.

\vskip5pt

{\rm (3)} \ The pair \ $(\mathcal{GP}_m, \ \mathcal P_m^\perp\cap\mathcal{P}^{<\infty})$ is a complete and hereditary cotorsion pair in $\mathcal{G P}^{<\infty}$, with
kernel $\mathcal{P}_m\cap\mathcal{P}_m^{\perp}$.

\vskip 5pt

{\rm (4)}  \ The triple \ $(\mathcal{PGF}_m, \ \mathcal P_m^\perp\cap\mathcal{G P}^{<\infty}, \ \mathcal{PGF}^\perp\cap \mathcal{GP}^{<\infty})$ is a hereditary Hovey triple in $\mathcal{G P}^{<\infty}$.
The corresponding homotopy category is
\ $(\mathcal{PGF}_m\cap \mathcal P_m^\perp) /(\mathcal{P}_m\cap \mathcal P_m^\perp)\cong \mathcal{PGF}/\mathcal{P}.$

\vskip 5pt

{\rm (5)}  \ The triple \ $(\mathcal{G P}_m, \ \mathcal P_m^\perp\cap\mathcal{G P}^{<\infty}, \ \mathcal P^{<\infty})$ is a hereditary Hovey triple in $\mathcal{G P}^{<\infty}$.
The corresponding homotopy category is
\ $(\mathcal{G P}_m\cap \mathcal P_m^\perp) /(\mathcal{P}_m\cap \mathcal P_m^\perp)\cong \mathcal{G P}/\mathcal{P}.$
\end{cor}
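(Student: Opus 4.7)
My plan is to mirror the arguments of Corollary \ref{mainthm6} and Theorem \ref{gpctp}, replacing $\mathcal{GP}_n$ by $\mathcal{GP}^{<\infty}$ throughout. The category $\mathcal{GP}^{<\infty}$ is closed under extensions, kernels of epimorphisms, and direct summands in $R\mbox{-}{\rm Mod}$, hence it is a weakly idempotent complete exact category. For parts (1) and (2) I would apply Theorem \ref{inducedctp1} with $\mathcal B=\mathcal{GP}^{<\infty}$ to the complete hereditary cotorsion pairs $(\mathcal P_m, \mathcal P_m^\perp)$ (Lemma \ref{ctp7.4.6}) and $(\mathcal{PGF}_m,\mathcal P_m^\perp\cap\mathcal{PGF}^\perp)$ (Corollary \ref{mainthm2}) in $R\mbox{-}{\rm Mod}$; the containment $\mathcal P_m\subseteq\mathcal{PGF}_m\subseteq\mathcal{GP}^{<\infty}$ is immediate. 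The identification of the kernel in (2) uses $\mathcal{PGF}_m\cap\mathcal{PGF}^\perp=\mathcal P_m$ from Theorem \ref{mainthm1}.

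Part (3) is the main obstacle, because Theorem \ref{inducedctp1} is unavailable: $(\mathcal{GP}_m,\mathcal P_m^\perp\cap\mathcal{GP}^\perp)$ is not known to be a cotorsion pair in $R\mbox{-}{\rm Mod}$ for arbitrary $R$. My plan is to reduce to Theorem \ref{gpctp} by exploiting $\mathcal{GP}^{<\infty}=\bigcup_{n\ge 0}\mathcal{GP}_n$: for any $M\in\mathcal{GP}^{<\infty}$, pick $n\ge\max\{m,{\rm Gpd}\,M\}$ so that $M\in\mathcal{GP}_n$, and feed $M$ into the complete cotorsion pair $(\mathcal{GP}_m,\mathcal P_m^\perp\cap\mathcal P_n)$ in $\mathcal{GP}_n$; the admissible exact sequences produced have one end in $\mathcal{GP}_m\subseteq\mathcal{GP}^{<\infty}$ and the other in $\mathcal P_m^\perp\cap\mathcal P_n\subseteq\mathcal P_m^\perp\cap\mathcal P^{<\infty}$, yielding completeness in $\mathcal{GP}^{<\infty}$. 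The cotorsion pair identities $\mathcal{GP}_m^\perp\cap\mathcal{GP}^{<\infty}=\mathcal P_m^\perp\cap\mathcal P^{<\infty}$ and ${}^\perp(\mathcal P_m^\perp\cap\mathcal P^{<\infty})\cap\mathcal{GP}^{<\infty}=\mathcal{GP}_m$ follow by the same arguments as in Theorem \ref{gpctp}; the key ingredients are Theorem \ref{mainthm3}, the identity $\mathcal{GP}^{<\infty}\cap\mathcal{GP}^\perp=\mathcal P^{<\infty}$ (which comes from Lemma \ref{basic} together with thickness of $\mathcal{GP}^\perp$), and the pushout construction of Theorem \ref{gpctp}. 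Heredity is clear since $\mathcal{GP}_m$ is closed under kernels of deflations, and the kernel reads $\mathcal P_m\cap\mathcal P_m^\perp$ via $\mathcal{GP}_m\cap\mathcal P^{<\infty}=\mathcal P_m$ (noted in the text).

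For parts (4) and (5), I would assemble the Hovey triples from (1)--(3) after checking that $\mathcal{PGF}^\perp\cap\mathcal{GP}^{<\infty}$ is thick in $\mathcal{GP}^{<\infty}$ (inherited from Lemma \ref{ctpPGF}) and that $\mathcal P^{<\infty}$ is thick. To identify the homotopy categories I would invoke Corollary \ref{wdeterminetriho} with two auxiliary hereditary Hovey triples in $\mathcal{GP}^{<\infty}$ having the same trivial classes, namely $(\mathcal{PGF},\mathcal{GP}^{<\infty},\mathcal{PGF}^\perp\cap\mathcal{GP}^{<\infty})$ and $(\mathcal{GP},\mathcal{GP}^{<\infty},\mathcal P^{<\infty})$. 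Each of these requires that $(\mathcal P,\mathcal{GP}^{<\infty})$ is a complete cotorsion pair in $\mathcal{GP}^{<\infty}$, whose substantive content is ${}^\perp\mathcal{GP}^{<\infty}\cap\mathcal{GP}^{<\infty}=\mathcal P$; I would verify this by resolving such a $C$ via Lemma \ref{basic} to force $C\in\mathcal{GP}$, then using a short exact sequence $0\to C'\to P\to C\to 0$ with $P$ projective and $C'\in\mathcal{GP}\subseteq\mathcal{GP}^{<\infty}$ to split it and conclude $C\in\mathcal P$. Theorem \ref{hoherhov} then delivers the equivalences $(\mathcal{PGF}_m\cap\mathcal P_m^\perp)/(\mathcal P_m\cap\mathcal P_m^\perp)\cong\mathcal{PGF}/\mathcal P$ and $(\mathcal{GP}_m\cap\mathcal P_m^\perp)/(\mathcal P_m\cap\mathcal P_m^\perp)\cong\mathcal{GP}/\mathcal P$.
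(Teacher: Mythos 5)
Your proposal is correct and follows essentially the same plan the paper intends; the paper itself offers no explicit proof of Corollary~\ref{mainthm7}, only the remark that the two identities $\mathcal P_m\cap\mathcal{GP}^{<\infty}=\mathcal P_m$ and $\mathcal{GP}_m\cap\mathcal P^{<\infty}=\mathcal P_m$ hold and that one argues ``similar to Corollary~\ref{mainthm6} and Theorem~\ref{gpctp},'' which is exactly what you spell out. The one small efficiency you miss is that $(\mathcal P,\mathcal{GP}^{<\infty})$ being a complete cotorsion pair in $\mathcal{GP}^{<\infty}$ is just part (1) with $m=0$ (apply Theorem~\ref{inducedctp1} to $(\mathcal P, R\mbox{-}{\rm Mod})$), so your hands-on verification via Lemma~\ref{basic} is unnecessary though correct; similarly, the auxiliary Hovey triples you introduce are precisely the $m=0$ instances of parts (4) and (5), so no extra work is needed to establish them once (1)--(3) are in place.
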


\section{\bf Exact model structures on $\mathcal {GF}_n$ and $\mathcal {GF}^{<\infty}$}

\subsection{Application 4: Exact model structures on $\mathcal{GF}_n$}

The following result is due to R. El Maaouy \cite{M}.
It is again a theorem of \v{S}aroch - \v{S}\'{t}ov\'{i}\v{c}ek type; and when $n=0$ it is given in \cite[Theorem 4.11]{SS2}.

\begin{thm} \label{keylemgf}  {\rm(\cite[Theorem 3.4]{M})} \ Let $R$ be a ring, $M$ an $R$-module, and $n$ a non-negative integer.
Then $\mathcal {PGF}^{\perp}\cap\mathcal{GF}_n  = \mathcal{F}_n;$ and the following are equivalent$:$

\vskip5pt

{\rm (1)} \ $M\in \mathcal{GF}_n.$

\vskip5pt

{\rm (2)} \ There is an exact sequence
$0 \longrightarrow K \longrightarrow G \longrightarrow M \longrightarrow 0$ which is again exact after applying  $\operatorname{Hom}_R(-, X)$ for $X\in \mathcal F_n^\perp$, where $G \in \mathcal{PGF}$, and $K$ is flat if $n=0$, and $K \in \mathcal{F}_{n-1}$ if $n\ge 1$.

\vskip5pt

{\rm (3)} \ $\operatorname{Ext}_R^1(M, C)=0$ for every module $C \in \mathcal{F}_n^{\perp}\cap \mathcal {PGF}^{\perp}.$

\vskip5pt

{\rm (4)} \ There is an exact sequence \ $0\longrightarrow M \longrightarrow L\longrightarrow  N\longrightarrow 0$ with $L \in \mathcal{F}_n$ and $N \in \mathcal{PGF}$.
\end{thm}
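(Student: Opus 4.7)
The plan is to mirror the arguments of Theorems \ref{mainthm1} and \ref{mainthm3}, substituting $\mathcal{PGF}$ for the role of $\mathcal{GP}$ on the left and $\mathcal{F}_n$ for the role of $\mathcal{P}_n$ on the right. Two preparatory ingredients carry the whole argument: (a) a Gorenstein-flat Auslander--Buchweitz decomposition --- for $M\in \mathcal{GF}_n$ with $n\ge 1$ there exists an exact sequence $0\to K\to G\to M\to 0$ with $G\in \mathcal{PGF}$ and $K\in \mathcal{F}_{n-1}$ (and $K=0$ for $n=0$); and (b) the containment $\mathcal{F}_n\subseteq \mathcal{PGF}^\perp$, equivalently $\Ext^1_R(G, F)=0$ for every $G\in \mathcal{PGF}$ and every $F\in \mathcal{F}_n$. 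I would establish these before running the cyclic chain of implications.

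The implication $(1)\Rightarrow (2)$ proceeds as in Theorem \ref{mainthm1}: apply (a) to $M$, then embed $G\in \mathcal{PGF}$ into a projective $P$ with cokernel again in $\mathcal{PGF}$, and form the pushout. The right column gives $0\to K\to P\to H\to 0$ with $H\in \mathcal{F}_n$; comparing the $\Hom_R(-, X)$ sequences for $X\in \mathcal{F}_n^\perp$ and using $\Ext^1_R(H, X)=0$ shows that $\Hom_R(G, X)\to \Hom_R(K, X)$ is surjective, which is precisely $(2)$. The implication $(2)\Rightarrow (3)$ is then immediate: for $C\in \mathcal{F}_n^\perp\cap \mathcal{PGF}^\perp$ apply $\Hom_R(-, C)$, combine $\Ext^1_R(G, C)=0$ with the assumed surjectivity of $\Hom_R(G, C)\to \Hom_R(K, C)$, and read off $\Ext^1_R(M, C)=0$.

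For $(3)\Rightarrow (4)$ I would copy the strategy of Theorem \ref{mainthm3}: use Lemma \ref{ctpPGF} to obtain $0\to M\to L\to N\to 0$ with $N\in \mathcal{PGF}$ and $L\in \mathcal{PGF}^\perp$, so that $\Ext^1_R(L, C)=0$ for every $C\in \mathcal{F}_n^\perp\cap \mathcal{PGF}^\perp$. Next, apply completeness of $(\mathcal{F}_n, \mathcal{F}_n^\perp)$ to $L$, producing $0\to K\to F\to L\to 0$ with $F\in \mathcal{F}_n$ and $K\in \mathcal{F}_n^\perp$. By (b), $F\in \mathcal{PGF}^\perp$; thickness of $\mathcal{PGF}^\perp$ (Lemma \ref{ctpPGF}) together with $L\in \mathcal{PGF}^\perp$ forces $K\in \mathcal{PGF}^\perp$, whence $K\in \mathcal{F}_n^\perp\cap \mathcal{PGF}^\perp$, the sequence splits, and $L\in \mathcal{F}_n$. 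The implication $(4)\Rightarrow (1)$ is just closure of $\mathcal{GF}_n$ under extensions. For the opening equality $\mathcal{PGF}^\perp\cap \mathcal{GF}_n=\mathcal{F}_n$: one inclusion is (b); conversely, given $M\in \mathcal{PGF}^\perp\cap \mathcal{GF}_n$, apply (a) and note that $K\in \mathcal{F}_{n-1}\subseteq \mathcal{PGF}^\perp$ together with $M\in \mathcal{PGF}^\perp$ forces $G\in \mathcal{PGF}\cap \mathcal{PGF}^\perp=\mathcal{P}$, so $M\in \mathcal{F}_n$.

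The principal obstacle is ingredient (a), the PGF-valued Auslander--Buchweitz decomposition for a module of finite Gorenstein flat dimension. The Gorenstein projective analogue (Lemma \ref{basic}) and the PGF analogue (Lemma \ref{PGFbasic}) are classical, but here one asks for something sharper --- that a module merely of finite Gorenstein flat dimension admits a resolution whose top term is not just Gorenstein flat but PGF --- and this requires the finer \v{S}aroch--\v{S}\'{t}ov\'{i}\v{c}ek machinery underlying the PGF cotorsion pair. Ingredient (b) is of independent interest and is essentially proved by combining the Pontryagin-dual description of PGF via vanishing of $\Tor$ against injectives with standard $\Ext$--$\Tor$ duality. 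Once (a) and (b) are available, the remainder of the proof is bookkeeping closely parallel to the proofs already in the paper.
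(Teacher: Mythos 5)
The overall shape you outline (mirroring Theorems~\ref{mainthm1} and~\ref{mainthm3}) is sound, but you have not actually established the ingredient (a) that you yourself identify as the principal obstacle, and the way you frame it misses where the real work lies. The proof the paper points to (that of \cite[Theorem~3.4]{M}, which the authors say carries over verbatim) does not first secure a freestanding Auslander--Buchweitz decomposition over $\mathcal{PGF}$; rather, one first proves $(1)\Rightarrow(4)$ by induction on $n$ with base case \cite[Theorem~4.11]{SS2}, and then derives (a)/(2) by combining the special precover $0\to N\to G\to M\to 0$ coming from the cotorsion pair $(\mathcal{PGF},\mathcal{PGF}^\perp)$ (Lemma~\ref{ctpPGF}) with the \emph{sharpened} inequality ${\rm Gfd}\,M_1\le\max\{{\rm Gfd}\,M_2,\ {\rm Gfd}\,M_3-1\}$ of Lemma~\ref{Gflatdim}(i). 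That $-1$ is exactly what places $N$ in $\mathcal{GF}_{n-1}$ (rather than $\mathcal{GF}_n$), so that applying (4) to $N$ and forming a pushout lands $K$ in $\mathcal{F}_{n-1}$ as required; the paper flags the necessity of this refinement immediately after stating Lemma~\ref{Gflatdim}. Saying (a) ``requires the finer \v{S}aroch--\v{S}\'{t}ov\'{i}\v{c}ek machinery'' does not supply this step, so the proposal has a genuine gap exactly at its entry point.

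There is also an error in your statement of (a): for $n=0$ the kernel $K$ must be \emph{flat}, not zero. If one could always take $K=0$ then every Gorenstein flat module would be PGF, which is false; the corrected $n=0$ statement is precisely \cite[Theorem~4.11]{SS2}. With $K$ flat, your derivation of the opening equality from $0\to K\to G\to M\to 0$ with $G$ forced projective still needs the additional input that a Gorenstein flat module of finite flat dimension is flat (Lemma~\ref{PGF+ffd =f}(1)); it does not follow by flat-dimension arithmetic alone. Two smaller points: $(4)\Rightarrow(1)$ uses closure of $\mathcal{GF}_n$ under kernels of deflations (again Lemma~\ref{Gflatdim}(i)), not closure under extensions; and ingredient (b) is most directly obtained from \cite[Theorem~4.11]{SS2} (which gives $\mathcal{FL}\subseteq\mathcal{PGF}^\perp$) together with thickness of $\mathcal{PGF}^\perp$ (Lemma~\ref{ctpPGF}), rather than by an $\Ext$--$\Tor$ duality argument. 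The remaining steps $(1)\Rightarrow(2)$ via pushout, $(2)\Rightarrow(3)$, and $(3)\Rightarrow(4)$ via $(\mathcal{F}_n,\mathcal{F}_n^\perp)$ and thickness are correctly outlined.
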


\vskip5pt

The careful reader will find that the statement (2)
is slightly stronger than  \cite[Theorem 3.4]{M}: here $K \in \mathcal{F}_{n-1}$ if $n\ge 1$, rather than
$K \in \mathcal{F}_{n}$ for all $n\ge 0$ as in \cite[Theorem 3.4]{M}; also, $X\in \mathcal F_n^\perp$ in (2), rather than  $X\in \mathcal F_n\cap \mathcal F_n^\perp$ as in
\cite[Theorem 3.4]{M}. In this way, Theorem \ref{keylemgf} coincides with the corresponding versions for
modules in $\mathcal{PGF}_n$ as in Theorem \ref{mainthm1}, and for modules $\mathcal{GP}_n$ as in Theorem \ref{mainthm3}.
We stress that the proof of \cite[Theorem 3.4]{M} completely holds for Theorem \ref{keylemgf}, thus we will not include a proof.

\vskip5pt

When $n = 0$, the following result is \cite[Corollary 4.12]{SS2}.

\begin{cor} \label{ctpGFm} \ {\rm (\cite[Theorems A, B]{M})} \ Let $R$ be a ring and $n$ a non-negative integer. Then

\vskip5pt

{\rm (1)} \ The pair \ $(\mathcal{GF}_n, \  \mathcal{F}_n^{\perp} \cap \mathcal {PGF}^{\perp})$ is a complete and hereditary cotorsion pair in $R\mbox{-}{\rm Mod}$ with
kernel $\mathcal{F}_n\cap\mathcal{F}_n^{\perp}$. This cotorsion pair is cogenerated by a set.

\vskip 5pt

{\rm (2)}  \ The triple \ $(\mathcal{GF}_n, \ \mathcal{F}_n^{\perp}, \ \mathcal {PGF}^{\perp})$ is a hereditary Hovey triple in $R\mbox{-}{\rm Mod};$
and the corresponding homotopy category is $(\mathcal{GF}_n\cap \mathcal{F}_n^{\perp})/(\mathcal  F_n\cap \mathcal{F}_n^{\perp})\cong \mathcal{PGF}/\mathcal{P}.$
\end{cor}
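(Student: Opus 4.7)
The proof will follow the same template as Corollary \ref{mainthm2}, with $\mathcal{PGF}_n$ replaced by $\mathcal{GF}_n$, $\mathcal{P}_n$ replaced by $\mathcal{F}_n$, and the key equality $\mathcal{PGF}^\perp\cap\mathcal{PGF}_n=\mathcal{P}_n$ replaced by its Maaouy-analogue $\mathcal{PGF}^\perp\cap\mathcal{GF}_n=\mathcal{F}_n$ from Theorem \ref{keylemgf}. So my plan for (1) is as follows. Put $\mathcal{A}=\mathcal{GF}_n$ and $\mathcal{B}=\mathcal{F}_n^\perp\cap\mathcal{PGF}^\perp$. Theorem \ref{keylemgf}(3) gives $\mathcal{A}={}^\perp\mathcal{B}$, hence $\mathcal{B}\subseteq\mathcal{A}^\perp$. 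For the reverse, note that $\mathcal{PGF}\subseteq\mathcal{GF}\subseteq\mathcal{GF}_n$ and $\mathcal{F}_n\subseteq\mathcal{GF}_n$, so $\mathcal{F}_n\cup\mathcal{PGF}\subseteq\mathcal{GF}_n$; therefore $\mathcal{A}^\perp\subseteq(\mathcal{F}_n\cup\mathcal{PGF})^\perp=\mathcal{F}_n^\perp\cap\mathcal{PGF}^\perp=\mathcal{B}$. This confirms $(\mathcal{A},\mathcal{B})$ is a cotorsion pair. Heredity follows because $\mathcal{GF}_n$ is closed under kernels of epimorphisms (a standard property of Gorenstein flat dimension, by the analogue of Lemma \ref{PGFdim} for $\mathrm{Gfd}$; equivalently, $\mathcal{F}_n^\perp\cap\mathcal{PGF}^\perp$ is closed under cokernels of monomorphisms).

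For completeness, I would follow the cogeneration-by-a-set route, exactly as in Corollary \ref{mainthm2}(1): the cotorsion pair $(\mathcal{PGF},\mathcal{PGF}^\perp)$ is cogenerated by a set $T$ (Lemma \ref{ctpPGF}), and the cotorsion pair $(\mathcal{F}_n,\mathcal{F}_n^\perp)$ is cogenerated by a set $B$ (see \cite[Theorem 3.4(2)]{MD}); then $\mathcal{F}_n^\perp\cap\mathcal{PGF}^\perp=B^\perp\cap T^\perp=(B\cup T)^\perp$, so $(\mathcal{GF}_n,\mathcal{F}_n^\perp\cap\mathcal{PGF}^\perp)$ is cogenerated by $B\cup T$, and hence complete. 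The kernel is computed via Theorem \ref{keylemgf}: $\mathcal{GF}_n\cap\mathcal{F}_n^\perp\cap\mathcal{PGF}^\perp=(\mathcal{GF}_n\cap\mathcal{PGF}^\perp)\cap\mathcal{F}_n^\perp=\mathcal{F}_n\cap\mathcal{F}_n^\perp$.

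For (2), set $(\mathcal{C},\mathcal{F},\mathcal{W})=(\mathcal{GF}_n,\mathcal{F}_n^\perp,\mathcal{PGF}^\perp)$. Then $\mathcal{W}=\mathcal{PGF}^\perp$ is thick by Lemma \ref{ctpPGF}. The identity $\mathcal{C}\cap\mathcal{W}=\mathcal{GF}_n\cap\mathcal{PGF}^\perp=\mathcal{F}_n$ from Theorem \ref{keylemgf} together with the Mao--Ding cotorsion pair $(\mathcal{F}_n,\mathcal{F}_n^\perp)$ gives the first required complete and hereditary cotorsion pair $(\mathcal{C}\cap\mathcal{W},\mathcal{F})$; the second, $(\mathcal{C},\mathcal{F}\cap\mathcal{W})$, is exactly the cotorsion pair proved in (1). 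Hence $(\mathcal{GF}_n,\mathcal{F}_n^\perp,\mathcal{PGF}^\perp)$ is a hereditary Hovey triple. For the triangle equivalence, I would invoke Corollary \ref{wdeterminetriho} (\v{S}\'tov\'i\v{c}ek): the Hovey triple $(\mathcal{PGF},R\text{-}\mathrm{Mod},\mathcal{PGF}^\perp)$ from Lemma \ref{ctpPGF} has the same class of trivial objects $\mathcal{PGF}^\perp$, so its homotopy category $\mathcal{PGF}/\mathcal{P}$ coincides with ours, which by Theorem \ref{hoherhov} equals $(\mathcal{GF}_n\cap\mathcal{F}_n^\perp)/(\mathcal{F}_n\cap\mathcal{F}_n^\perp)$.

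No single step is really an obstacle, because all of the heavy lifting has already been carried out: the non-trivial content (the four equivalent characterizations of $\mathcal{GF}_n$ and the identity $\mathcal{PGF}^\perp\cap\mathcal{GF}_n=\mathcal{F}_n$) lives in Theorem \ref{keylemgf}, and the thickness of $\mathcal{PGF}^\perp$ lives in Lemma \ref{ctpPGF}. The only point deserving a moment's care is verifying that $\mathcal{GF}_n$ is closed under extensions and kernels of epimorphisms, which ensures both heredity and that the Hovey triple machinery from Theorem \ref{hoherhov} and Corollary \ref{wdeterminetriho} applies; this follows from the Gorenstein-flat analogue of Lemma \ref{PGFdim}, used here without further comment.
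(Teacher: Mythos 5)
Your proof is correct, but note that the paper does not give its own proof of Corollary~\ref{ctpGFm}: it simply cites Maaouy's Theorems~A and~B, having already recorded Maaouy's key characterization as Theorem~\ref{keylemgf}. Your reconstruction is the obvious analogue of the paper's proof of Corollary~\ref{mainthm2}, substituting $\mathcal{F}_n$ for $\mathcal{P}_n$, $\mathcal{GF}_n$ for $\mathcal{PGF}_n$, Theorem~\ref{keylemgf} for Theorem~\ref{mainthm1}, and Lemma~\ref{ctpMD} for Lemma~\ref{ctp7.4.6}; every ingredient is available in the paper (in particular the ``Gorenstein-flat analogue of Lemma~\ref{PGFdim}'' you invoke for closure of $\mathcal{GF}_n$ under kernels of epimorphisms is exactly Lemma~\ref{Gflatdim}(i), which you could cite explicitly), so the argument goes through verbatim.
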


\begin{lem} \label{ctpMD} \ {\rm (\cite[Theorem 3.4(2)]{MD})} \ Let $R$ be a ring, and $n$ a non-negative integer. Then
$(\mathcal{F}_n, \ \mathcal{F}_n^{\perp})$ is a complete and hereditary cotorsion pair in $R$\mbox{-}{\rm Mod}, and cogenerated by a set.\end{lem}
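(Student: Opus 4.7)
The plan is to bootstrap from the flat cotorsion pair $(\mathcal{FL}, \mathcal{EC}) = (\mathcal{F}_0, \mathcal{F}_0^\perp)$ of Bican--El Bashir--Enochs --- complete and cogenerated by a set, as recorded in Table~1 --- and extend to higher $n$ using flat-dimension shifting, in parallel with Lemma~\ref{ctp7.4.6} for projective dimension.

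First I would record the closure properties of $\mathcal{F}_n$: via the Tor long exact sequence, $\mathcal{F}_n$ is closed under extensions, direct summands, arbitrary direct sums, and kernels of epimorphisms. The last point is the key for heredity: from $0 \to A \to B \to C \to 0$ with $B, C \in \mathcal{F}_n$, the vanishing of $\operatorname{Tor}_{n+1}^R(-, B)$ and $\operatorname{Tor}_{n+2}^R(-, C)$ forces $\operatorname{Tor}_{n+1}^R(-, A) = 0$, so $A \in \mathcal{F}_n$. For cogeneration by a set (which simultaneously delivers completeness and the cotorsion-pair property), I would fix $\kappa = |R| + \aleph_0$ and invoke a Hill-lemma / Kaplansky-style argument to show that every $M \in \mathcal{F}_n$ admits a continuous filtration by $\le \kappa$-presented submodules all lying in $\mathcal{F}_n$ --- that is, $\mathcal{F}_n$ is deconstructible. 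Taking $S_n$ to be a set of representatives of such small modules, one has $\mathcal{F}_n^\perp = S_n^\perp$, and Eklof--Trlifaj (in its Grothendieck-category version \cite[Theorem 2.4]{Hov}) then yields that $(\mathcal{F}_n, \mathcal{F}_n^\perp)$ is a complete cotorsion pair cogenerated by $S_n$, with ${}^\perp(\mathcal{F}_n^\perp)$ equal to the closure of $S_n$ under transfinite extensions and summands, which is exactly $\mathcal{F}_n$. Heredity then follows from closure under kernels of deflations, combined with completeness, via \cite[Proposition 1.2.10]{GR}.

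The main obstacle is the deconstructibility step: producing the cogenerating set requires the Hill-lemma machinery, which is technical but standard in the theory of cotorsion pairs. An alternative route is direct induction on $n$ from the $n=0$ case, constructing $S_n$ from the cogenerating set $S_0$ of the flat cotorsion pair together with $n$-th syzygies and applying a pushout argument analogous to the one used in the proof of Theorem~\ref{mainthm1}; this avoids invoking Hill's lemma explicitly, at the cost of more delicate syzygy bookkeeping.
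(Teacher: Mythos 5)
The paper does not supply a proof of Lemma~\ref{ctpMD}; it is invoked as a black-box citation to Mao and Ding \cite[Theorem 3.4(2)]{MD}, so there is no in-paper argument to compare your sketch against. That said, your outline is the standard route to the cited result and is essentially sound. The Tor long-exact-sequence argument correctly gives closure of $\mathcal{F}_n$ under kernels of epimorphisms (one needs $\operatorname{Tor}^R_{n+2}(N,C)\to\operatorname{Tor}^R_{n+1}(N,A)\to\operatorname{Tor}^R_{n+1}(N,B)$ with both ends vanishing), which together with completeness yields heredity. The crux, as you correctly flag, is deconstructibility of $\mathcal{F}_n$: once one has a set $S_n$ of representatives of $\le\kappa$-presented members of $\mathcal{F}_n$ such that every $M\in\mathcal{F}_n$ is $S_n$-filtered, the Eklof lemma gives $\mathcal{F}_n^\perp = S_n^\perp$, the Eklof--Trlifaj theorem (or \cite[Theorem 2.4]{Hov}) gives completeness, and $^\perp(S_n^\perp)=\mathcal{F}_n$ follows because $\mathcal{F}_n$ is closed under transfinite extensions and direct summands. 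One small caveat on your ``alternative route'': reducing from $n$ to $0$ via syzygies is not really a pushout argument in the spirit of Theorem~\ref{mainthm1} (which constructs approximations); it is a filtration-lifting argument, in which a Kaplansky-type filtration of a flat $n$-th syzygy of $M$ is transported back to a filtration of $M$ itself by $\le\kappa$-presented submodules lying in $\mathcal{F}_n$. The mechanism you gesture at is the right one, just mislabeled.
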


\vskip5pt

The following result is a special case of D. Bennis, R. El Maaouy, J.R. Garc\'ia Rozas, L. Oyonarte \cite{BMGO}, by taking $W = R$ in Proposition 7.9.

\vskip5pt

\begin{lem} \label{Gflatdim} \ {\rm (\cite[Proposition 7.9]{BMGO})} \ Let $R$ be a ring, and
$0\longrightarrow M_1 \longrightarrow M_2\longrightarrow M_3\longrightarrow 0$ an exact sequence. Then

\vskip5pt

${\rm (i)}$ \ \ ${\rm Gfd}M_1\le {\rm max}\{{\rm Gfd}M_2, {\rm Gfd}M_3-1\};$

 \vskip5pt

${\rm (ii)}$ \  ${\rm Gfd}M_2\le {\rm max}\{{\rm Gfd}M_1, {\rm Gfd}M_3\};$

\vskip5pt

${\rm (iii)}$ \  ${\rm Gfd}M_3\le {\rm max}\{{\rm Gfd}M_1+1, {\rm Gfd}M_2 \}$.
\end{lem}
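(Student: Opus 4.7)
The plan is to derive all three inequalities from the Tor-vanishing characterization of Gorenstein flat dimension, combined with the long exact sequence of Tor. The key ingredient I would invoke is the fact, available over an \emph{arbitrary} ring $R$ thanks to \cite{SS2}, that for any $R$-module $N$ and any non-negative integer $n$,
$${\rm Gfd}\, N \le n \ \Longleftrightarrow \ \operatorname{Tor}_i^R(I, N) = 0 \ \text{for all} \ i > n \ \text{and all right injective} \ R\text{-modules}\ I,$$
equivalently, that the $n$-th syzygy of $N$ in any flat resolution is Gorenstein flat.

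Granting this characterization, each of (i), (ii), (iii) is a one-line computation on the long exact Tor sequence
$$\cdots \longrightarrow \operatorname{Tor}_{i+1}^R(I, M_3) \longrightarrow \operatorname{Tor}_i^R(I, M_1) \longrightarrow \operatorname{Tor}_i^R(I, M_2) \longrightarrow \operatorname{Tor}_i^R(I, M_3) \longrightarrow \cdots$$
attached to an arbitrary right injective $R$-module $I$. In each case the desired inequality is vacuous when the right-hand side is $\infty$, so one may assume finite bounds. For (ii), set $n = {\rm max}\{{\rm Gfd}\, M_1, {\rm Gfd}\, M_3\}$; the outer terms vanish for $i > n$, forcing $\operatorname{Tor}_i^R(I, M_2) = 0$. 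For (iii), set $n = {\rm max}\{{\rm Gfd}\, M_1 + 1, {\rm Gfd}\, M_2\}$; then $\operatorname{Tor}_i^R(I, M_2) = 0$ and $\operatorname{Tor}_{i-1}^R(I, M_1) = 0$ for $i > n$ (since $i-1 \ge n \ge {\rm Gfd}\, M_1 + 1$), hence $\operatorname{Tor}_i^R(I, M_3) = 0$. For (i), set $n = {\rm max}\{{\rm Gfd}\, M_2, {\rm Gfd}\, M_3 - 1\}$; then $\operatorname{Tor}_i^R(I, M_2) = 0$ and $\operatorname{Tor}_{i+1}^R(I, M_3) = 0$ for $i > n$, so $\operatorname{Tor}_i^R(I, M_1) = 0$.

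The main obstacle is establishing the Tor-vanishing characterization above over an arbitrary ring, since classically (Holm) it required right coherence. My plan here would be to appeal to the complete cotorsion pair $(\mathcal{GF}_n,\ \mathcal{F}_n^\perp \cap \mathcal{PGF}^\perp)$ of Corollary~\ref{ctpGFm} together with Theorem~\ref{keylemgf}: the implication $(1) \Longleftrightarrow (4)$ of the latter shows that every $M \in \mathcal{GF}_n$ fits into a short exact sequence $0 \to M \to L \to N \to 0$ with $L \in \mathcal{F}_n$ and $N \in \mathcal{PGF}$, from which Tor-vanishing with injectives in degrees $> n$ drops out at once (flats kill higher Tor, and PGF modules are Gorenstein flat). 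The converse uses that a module with $\operatorname{Tor}_i^R(I, -) = 0$ for all $i > 0$ and all right injectives $I$ is Gorenstein flat, which is built into the \v{S}aroch--\v{S}\'{t}ov\'{i}\v{c}ek framework. This extended characterization is precisely the content of \cite[Proposition 7.9]{BMGO}, and once it is in hand the three inequalities follow mechanically from the long exact Tor sequence as outlined.
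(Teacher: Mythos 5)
The paper does not prove this lemma; it simply cites \cite[Proposition~7.9]{BMGO}, so there is no internal argument to compare against. I will therefore assess your attempt on its own terms, and there is a genuine gap.

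Your whole plan rests on the biconditional
\[
{\rm Gfd}\,N\le n \iff \operatorname{Tor}_i^R(I,N)=0\ \text{ for all } i>n \text{ and all injective right } I,
\]
for an arbitrary ring $R$. The implication $\Rightarrow$ is fine, and your derivation of it via Theorem~\ref{keylemgf} $(1)\Longleftrightarrow(4)$ works. The problem is the converse, which is what every one of your three Tor computations actually invokes in order to pass from \emph{Tor-vanishing in degrees above some bound} back to a bound on ${\rm Gfd}$. You dismiss this as ``built into the \v{S}aroch--\v{S}\'{t}ov\'{i}\v{c}ek framework,'' but it is not: what \cite{SS2} gives is that $(\mathcal{GF},\,\mathcal{EC}\cap\mathcal{PGF}^{\perp})$ is a complete hereditary cotorsion pair and that $\mathcal{GF}$ is closed under extensions, direct summands and kernels of epimorphisms over any ring. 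None of these statements says that a module with $\operatorname{Tor}_{>0}^R(I,-)=0$ against all injectives is Gorenstein flat. The Tor-vanishing hypothesis gives you the left half of a totally acyclic flat complex for free, but constructing the right half (an exact coresolution by flats that stays exact under $I\otimes_R-$) is precisely what coherence was used for in Holm's classical treatment, and the cotorsion-pair machinery does not obviously supply it: the special preenvelope $0\to M\to C\to G\to 0$ it produces has $C$ cotorsion, not flat. You also attribute the Tor characterization to \cite[Proposition~7.9]{BMGO}, but that proposition \emph{is} the lemma you are asked to prove, so the citation is circular.

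Concretely, the three inequalities can be proved over an arbitrary ring without the disputed converse, using only the one-way implication you did establish together with the closure properties of $\mathcal{GF}$ from \cite{SS2}: (ii) follows from the Horseshoe lemma and closure of $\mathcal{GF}$ under extensions; (iii) reduces to (ii) by comparing $\ker(F_0\to M_2)$ with $\ker(F_0\to M_3)$ for a flat $F_0\twoheadrightarrow M_2$; and (i) is the delicate one (its ``$-1$'' is exactly what your Tor argument would need the converse for), and requires a pullback against the approximation $0\to K\to G\to M_3\to 0$ furnished by Theorem~\ref{keylemgf}(2) together with the fact that $\mathcal{GF}$ is closed under kernels of epimorphisms. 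As written, your proof establishes nothing beyond the $\Rightarrow$ half of the characterization.
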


We specially appreciate the best upper bound given in (i), as for projective dimension, rather than
a correct upper bound: ${\rm Gfd}M_1\le {\rm max}\{{\rm Gfd}M_2, {\rm Gfd}M_3\}.$ This is really needed in the proof of
Theorem \ref{keylemgf}(2).

\vskip10pt

\begin{lem} \label{PGF+ffd =f}  \ Let $R$ be a ring, and $M$ an $R$-module with ${\rm fd} M < \infty$.

\vskip5pt

{\rm (1)} \ {\rm(\cite[Remark 1.5]{E})} \ If $M\in\mathcal {GF}$, then $M$ is flat.

\vskip5pt

{\rm (2)} \ {\rm (\cite{DE})} \ If $M\in\mathcal {PGF}$, then $M$ is projective.

\end{lem}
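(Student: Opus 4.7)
My plan is to prove (1) by a standard Tor dimension–shifting argument, and then bootstrap to (2) by combining (1) with the Šaroch–Šťovíček cotorsion pair $(\mathcal{PGF},\mathcal{PGF}^\perp)$ of Lemma~\ref{ctpPGF} and a character–module / pure-injectivity argument. The standing input from the definition of $\mathcal{GF}$ is that for every $N\in\mathcal{GF}$, every injective right $R$-module $I$, and every $i\ge 1$, one has ${\rm Tor}_i^R(I,N)=0$ (immediate from tensoring the defining flat resolution of $N$ with $I$ and splicing).

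For (1), I argue by contradiction. Suppose $n:={\rm fd}\,M\ge 1$ and pick a right $R$-module $N$ with ${\rm Tor}_n^R(N,M)\ne 0$. Embed $N$ into an injective right $R$-module $I$, giving $0\to N\to I\to I/N\to 0$; the Tor long exact sequence contains
\[
0={\rm Tor}_{n+1}^R(I/N,M)\longrightarrow{\rm Tor}_n^R(N,M)\longrightarrow{\rm Tor}_n^R(I,M),
\]
which forces ${\rm Tor}_n^R(I,M)\ne 0$ and contradicts the standing vanishing. Hence $n=0$, proving $M$ is flat.

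For (2), since $\mathcal{PGF}\subseteq\mathcal{GF}$, part (1) applied to $M$ immediately yields that $M$ is flat. Fix a totally acyclic complex of projectives $P^\bullet=\cdots\to P_1\to P_0\to P^0\to P^1\to\cdots$ with $M$ as a cycle and with $I\otimes_R P^\bullet$ exact for every injective right $R$-module $I$ (this is the witness that $M\in\mathcal{PGF}$). Every syzygy of $P^\bullet$ again lies in $\mathcal{PGF}$, and by an easy Tor shift from the flatness of $M$ also has finite flat dimension; a second application of (1) therefore forces every syzygy of $P^\bullet$ to be flat. Consequently each short exact sequence extracted from $P^\bullet$ is pure, so tensoring with \emph{any} right $R$-module $N$ preserves exactness: $N\otimes_R P^\bullet$ is exact for every $N$, not only for injectives.

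Dualising by ${\rm Hom}_\mathbb{Z}(-,\mathbb{Q}/\mathbb{Z})$ and using the adjunction ${\rm Hom}_\mathbb{Z}(N\otimes_R P^\bullet,\mathbb{Q}/\mathbb{Z})\cong{\rm Hom}_R(P^\bullet,N^+)$ (with $N^+:={\rm Hom}_\mathbb{Z}(N,\mathbb{Q}/\mathbb{Z})$ the character module) shows that ${\rm Hom}_R(P^\bullet,N^+)$ is exact for every right $R$-module $N$. Reading the cohomology at the position of $M$ gives ${\rm Ext}_R^i(M,N^+)=0$ for every $N$ and every $i\ge 1$. Since every pure-injective left $R$-module $L$ is a direct summand of a character module (namely of $(L^+)^+$), this yields ${\rm Ext}_R^1(M,L)=0$ for every pure-injective $L$, so $M$ is pure-projective; combined with $M$ being flat, Lazard's theorem then forces $M$ to be projective. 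The main obstacle is the upgrade from exactness of $I\otimes_R P^\bullet$ for injective $I$ to exactness of $N\otimes_R P^\bullet$ for arbitrary $N$, which relies on propagating flatness to every syzygy of $P^\bullet$ via iterated use of (1); once that is in hand, the remainder is the character–module/adjunction trick.
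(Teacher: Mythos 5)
Your argument for~(1) is correct: it is the standard Tor dimension--shifting argument and needs no coherence hypothesis. The paper simply cites Emmanouil's Remark~1.5, but the identical computation appears in the proof of Proposition~\ref{finitisticdims}(2), where coherence is only used for the stronger conclusion ${\rm Gfd}\,M={\rm fd}\,M$.

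For~(2) there is a genuine gap at the last step. Your reduction to the situation where every cycle of the totally acyclic complex $P^\bullet$ is flat is sound, and the deduction that $\operatorname{Hom}_R(P^\bullet,N^+)$ is exact for all $N$, hence $\operatorname{Ext}^i_R(M,L)=0$ for every $i\ge1$ and every pure-injective $L$, is also sound. But that vanishing carries no information whatsoever for a flat module: every pure-injective module is cotorsion (a short exact sequence $0\to L\to X\to F\to 0$ with $F$ flat is pure, so it splits once $L$ is pure-injective), and $(\mathcal{FL},\ \mathcal{EC})$ is a hereditary cotorsion pair, so $\operatorname{Ext}^i_R(M,L)=0$ already holds for \emph{every} flat $M$ and every pure-injective $L$. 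In particular, this vanishing cannot imply pure-projectivity --- if it did, every flat module would be pure-projective and hence projective, which fails already for $\mathbb{Q}$ over $\mathbb{Z}$. What your reduction actually calls for is the nontrivial theorem that an acyclic complex of projectives all of whose cycles are flat is contractible (a Neeman/Emmanouil-type result), and that is precisely the input hidden in the paper's citation of Dalezios--Emmanouil to get ${\rm pd}\,M={\rm fd}\,M<\infty$, after which $M\in\mathcal{GP}\cap\mathcal P^{<\infty}=\mathcal P$ finishes. The paper's alternative one-liner, $M\in\mathcal{PGF}\cap\mathcal F_n=\mathcal{PGF}\cap\mathcal{PGF}^\perp\cap\mathcal{GF}_n=\mathcal P\cap\mathcal{GF}_n=\mathcal P$ via Theorem~\ref{keylemgf}, avoids the complex-of-projectives argument entirely.
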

\begin{proof} \ The assertion (2) is implicitly contained in \cite{DE}. For convenience we include a justification. By \cite[Proposition 2.5{\rm(ii)}]{DE}, ${\rm pd} M = {\rm fd} M < \infty$.
Since $M\in\mathcal {PGF}$, it follows that $M$ is Gorenstein projective (\cite[Theorem 4.4]{SS2}), with finite projective dimension, and hence $M$ is projective (cf. \cite[Proposition 10.2.3]{EJ}).
{\bf An alternative proof:} by Theorem \ref{keylemgf} one has
\vskip5pt
$M\in \mathcal {PGF}\cap \mathcal F_n = \mathcal {PGF}\cap \mathcal{GF}_n\cap \mathcal {PGF}^{\perp}
= (\mathcal {PGF}\cap \mathcal {PGF}^{\perp})\cap \mathcal{GF}_n = \mathcal{P} \cap \mathcal{GF}_n = \mathcal{P}.$\end{proof}

\begin{lem} \label{fnthickingfn} \ Let $R$ be a ring and $n$ a non-negative integer. Then
$\mathcal F_n$ is thick in $\mathcal {GF}_n$.
\end{lem}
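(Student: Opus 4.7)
The plan is to mirror exactly the argument used for Lemma \ref{pnthickinpgfn}, replacing PGF dimension by Gorenstein flat dimension and projective dimension by flat dimension, with the key identity being $\mathcal {PGF}^{\perp}\cap\mathcal{GF}_n=\mathcal F_n$ from Theorem \ref{keylemgf}. Closure of $\mathcal F_n$ under direct summands in $R$-Mod (hence in $\mathcal{GF}_n$) is standard, so the task reduces to the two-out-of-three property: given an admissible exact sequence $0\longrightarrow X\longrightarrow Y\longrightarrow Z\longrightarrow 0$ in $\mathcal{GF}_n$, if any two of $X,Y,Z$ lie in $\mathcal F_n$ then so does the third.

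The two ``easy'' cases follow at once from the standard bounds on flat dimension. If $Y,Z\in\mathcal F_n$, then ${\rm fd}X\le\max\{{\rm fd}Y,{\rm fd}Z-1\}\le n$. If $X,Z\in\mathcal F_n$, then ${\rm fd}Y\le\max\{{\rm fd}X,{\rm fd}Z\}\le n$. In either case the conclusion is automatic, without using the ambient category $\mathcal{GF}_n$.

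The substantive case is $X,Y\in\mathcal F_n$, where one only gets ${\rm fd}Z\le\max\{{\rm fd}X+1,{\rm fd}Y\}\le n+1$, so a priori only $Z\in\mathcal F_{n+1}$. Here the key identity is used twice. By Theorem \ref{keylemgf}, applied with $n+1$ in place of $n$, one has $\mathcal F_{n+1}=\mathcal{PGF}^{\perp}\cap\mathcal{GF}_{n+1}$, so
\[
Z\in \mathcal F_{n+1}\cap \mathcal{GF}_n=\mathcal{PGF}^{\perp}\cap\mathcal{GF}_{n+1}\cap\mathcal{GF}_n=\mathcal{PGF}^{\perp}\cap\mathcal{GF}_n=\mathcal F_n,
\]
again by Theorem \ref{keylemgf}. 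This completes the hard case.

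The expected obstacle is essentially cosmetic: one must check that the identity $\mathcal{PGF}^{\perp}\cap\mathcal{GF}_{n+1}=\mathcal F_{n+1}$ really is available for all non-negative integers (it is, by Theorem \ref{keylemgf}), and that the bound ${\rm fd}Z\le n+1$ in the hard case is tight enough to land $Z$ in $\mathcal{GF}_{n+1}$ so that the identity applies. Both are immediate. As an alternative, one can give the pullback argument in the spirit of the second proof of Lemma \ref{pnthickinpgfn}: take the exact sequence $0\longrightarrow K\longrightarrow G\longrightarrow Z\longrightarrow 0$ from Theorem \ref{keylemgf}(2) with $G\in\mathcal{PGF}$ and $K\in\mathcal F_{n-1}$ (or $K$ flat when $n=0$), and form the pullback against $Y\twoheadrightarrow Z$, obtaining $L$ with $0\longrightarrow K\longrightarrow L\longrightarrow Y\longrightarrow 0$ and $0\longrightarrow X\longrightarrow L\longrightarrow G\longrightarrow 0$. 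The first sequence forces $L\in\mathcal F_n$, and the second then forces ${\rm fd}G<\infty$, so by Lemma \ref{PGF+ffd =f}(2) the module $G$ is projective; consequently $0\longrightarrow K\longrightarrow G\longrightarrow Z\longrightarrow 0$ exhibits $Z\in\mathcal F_n$.
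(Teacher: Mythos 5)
Your main argument is essentially the paper's proof: reduce to the case $X,Y\in\mathcal F_n$, observe $Z\in\mathcal F_{n+1}\cap\mathcal{GF}_n$, and apply the identity $\mathcal{PGF}^{\perp}\cap\mathcal{GF}_t=\mathcal F_t$ from Theorem \ref{keylemgf} at levels $n+1$ and $n$. The only cosmetic difference is that you run the argument uniformly for all $n\ge 0$, whereas the paper handles $n=0$ separately by citing \cite[Theorem 4.3]{DE} together with Lemma \ref{PGF+ffd =f}(1) and then restricts the dimension-shift argument to $n\ge 1$; your uniform version is correct, since the key identity holds for every non-negative integer.

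Your alternative pullback argument, however, has a small gap at $n=0$ that you should be aware of. In the PGF analogue (Lemma \ref{pnthickinpgfn}), Lemma \ref{PGFbasic} gives $K=0$ when $n=0$, so once $G$ is shown projective one gets $Z\cong G\in\mathcal P=\mathcal P_0$ immediately. In the Gorenstein flat setting, Theorem \ref{keylemgf}(2) only provides $K$ flat when $n=0$, not $K=0$: a Gorenstein flat module need not itself be PGF. Thus after concluding that $G$ is projective, the sequence $0\to K\to G\to Z\to 0$ only yields ${\rm fd}\,Z\le 1$, not $Z\in\mathcal F_0$. You would need one more step here: since $Z\in\mathcal{GF}$ and ${\rm fd}\,Z<\infty$, Lemma \ref{PGF+ffd =f}(1) forces $Z$ to be flat. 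For $n\ge 1$ the alternative argument closes as you wrote it.
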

\begin{proof} \ If $n = 0$ this in known in \cite[Theorem 4.3]{DE}, by using Lemma \ref{PGF+ffd =f}(1). Assume that $n\ge 1$. The proof is the similar as Lemma \ref{pnthickinpgfn}. Let $0\longrightarrow X\longrightarrow Y\longrightarrow Z\longrightarrow 0$ be an exact sequence in $\mathcal {GF}_n$.
It suffices to prove that if $X\in \mathcal F_n$ and $Y\in \mathcal F_n$ then $Z\in \mathcal F_n$.
By $\mathcal {PGF}^{\perp}\cap\mathcal{GF}_n  = \mathcal{F}_n$ (cf. Theorem \ref{keylemgf})
one has $Z\in \mathcal F_{n+1}\cap \mathcal {GF}_n = \mathcal {PGF}^{\perp}\cap\mathcal{GF}_{n+1} \cap \mathcal {GF}_n
= \mathcal {PGF}^{\perp}\cap \mathcal {GF}_n = \mathcal F_n.$
\end{proof}

\vskip5pt

For any non-negative integer $n$,  we have four chains of complete and hereditary cotorsion pairs $(\mathcal X, \ \mathcal Y)$ in $R$-Mod with
$\mathcal X\subseteq \mathcal B = \mathcal{GF}_n$:
$$(\mathcal P_m, \ \mathcal P_m^\perp), \ \ \ \ (\mathcal F_m, \ \mathcal F_m^\perp), \ \ \ \ (\mathcal {PGF}_m, \ \mathcal P_m^\perp\cap \mathcal {PGF}^\perp),
\ \ \ \ (\mathcal {GF}_m, \ \mathcal F_m^\perp\cap\mathcal {PGF}^{\perp})$$
where $0\le m\le n$ (see Lemma \ref{ctp7.4.6}, Lemma \ref{ctpMD}, Corollary \ref{mainthm2}, and Corollary \ref{ctpGFm}). Applying Theorem \ref{inducedctp1} to $\mathcal B = \mathcal{GF}_n$, one gets
the assertions (1) - (4) in the following corollary.

\begin{cor} \label{mainthm8} \ Let $R$ be a ring,  $m$ and $n$ non-negative integers with $m\le n$. Then

\vskip5pt

{\rm (1)} \ The pair \ $(\mathcal{P}_m, \ \mathcal P_m^\perp\cap \mathcal{GF}_n)$ is a complete and hereditary cotorsion pair in weakly idempotent complete exact category $\mathcal{GF}_n$, with
kernel $\mathcal{P}_m\cap\mathcal{P}_m^{\perp}$.

\vskip5pt

{\rm (2)} \ The pair \ $(\mathcal{F}_m, \ \mathcal F_m^\perp\cap \mathcal{GF}_n)$ is a complete and hereditary cotorsion pair in $\mathcal{GF}_n$, with
kernel $\mathcal{F}_m\cap\mathcal{F}_m^{\perp}$.

\vskip5pt

{\rm (3)} \ The pair \ $(\mathcal{PGF}_m, \ \mathcal{F}_n\cap \mathcal P_m^\perp)$ is a complete and hereditary cotorsion pair in  $\mathcal{GF}_n$, with
kernel $\mathcal{P}_m\cap\mathcal{P}_m^{\perp}$.

\vskip5pt

{\rm (4)} \ The pair \ $(\mathcal{GF}_m, \ \mathcal{F}_n\cap\mathcal F_m^\perp)$ is a complete and hereditary cotorsion pair in $\mathcal{GF}_n$, with
kernel $\mathcal{F}_m\cap\mathcal{F}_m^{\perp}$.

\vskip 5pt

{\rm (5)}  \ The triple \ $(\mathcal{PGF}_m, \ \mathcal P_m^\perp\cap\mathcal{GF}_n, \ \mathcal {F}_{n})$ is a hereditary Hovey triple in $\mathcal{GF}_n$.
The corresponding homotopy category is \ $(\mathcal{PGF}_m\cap \mathcal P_m^\perp) /(\mathcal{P}_m\cap \mathcal P_m^\perp)\cong \mathcal{PGF}/\mathcal P.$

\vskip 5pt

{\rm (6)}  \ The triple \ $(\mathcal{GF}_m, \ \mathcal F_m^\perp\cap\mathcal{GF}_n, \ \mathcal {F}_{n})$ is a hereditary Hovey triple in $\mathcal{GF}_n$.
The corresponding homotopy category is \ $(\mathcal{GF}_m\cap \mathcal F_m^\perp) /(\mathcal{F}_m\cap \mathcal F_m^\perp)\cong \mathcal{PGF}/\mathcal P.$

\end{cor}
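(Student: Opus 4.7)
The plan is to prove parts (1)--(4) by applying Theorem \ref{inducedctp1} to $\mathcal{B}=\mathcal{GF}_n$, using the four complete hereditary cotorsion pairs listed just before the corollary, and then to assemble the Hovey triples (5) and (6) from these. First I would verify that $\mathcal{GF}_n$ is a weakly idempotent complete exact category: by Lemma \ref{Gflatdim} it is closed under extensions and under kernels of epimorphisms, and it is closed under direct summands, so the hypotheses of Theorem \ref{inducedctp1} are met. Next, for each of the four cotorsion pairs $(\mathcal P_m,\mathcal P_m^\perp)$, $(\mathcal F_m,\mathcal F_m^\perp)$, $(\mathcal{PGF}_m,\mathcal P_m^\perp\cap\mathcal{PGF}^\perp)$, $(\mathcal{GF}_m,\mathcal F_m^\perp\cap\mathcal{PGF}^\perp)$, the left class is contained in $\mathcal{GF}_n$ (using $m\le n$ and $\mathcal{PGF}\subseteq\mathcal{GF}$), so Theorem \ref{inducedctp1} produces the corresponding complete and hereditary cotorsion pair in $\mathcal{GF}_n$. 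For parts (3) and (4), the right class simplifies via Theorem \ref{keylemgf}: intersecting $\mathcal{PGF}^\perp$ with $\mathcal{GF}_n$ yields $\mathcal{F}_n$, so the right classes collapse to $\mathcal F_n\cap\mathcal P_m^\perp$ and $\mathcal F_n\cap\mathcal F_m^\perp$ respectively.

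The kernel computations for (1) and (2) are immediate from $\mathcal P_m,\mathcal F_m\subseteq\mathcal{GF}_n$. For (3) the kernel is $\mathcal{PGF}_m\cap\mathcal F_n\cap\mathcal P_m^\perp$, so the main point is the equality $\mathcal{PGF}_m\cap\mathcal F_n=\mathcal P_m$: given $M$ in the left side, Lemma \ref{PGFbasic} produces $0\to K\to G\to M\to 0$ with $G\in\mathcal{PGF}$ and $K\in\mathcal P_{m-1}$, the standard flat-dimension inequality forces ${\rm fd}\,G<\infty$, and Lemma \ref{PGF+ffd =f}(2) then gives $G\in\mathcal P$, whence ${\rm pd}\,M\le m$. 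For (4) the analogous identity $\mathcal{GF}_m\cap\mathcal F_n=\mathcal F_m$ is obtained in parallel fashion using the Auslander--Buchweitz resolution for Gorenstein flat modules together with Lemma \ref{PGF+ffd =f}(1). These two identities are the only nontrivial technical step; once available, the kernels are as stated.

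For parts (5) and (6), observe that $\mathcal F_n$ is thick in $\mathcal{GF}_n$ by Lemma \ref{fnthickingfn}. For (5): $(\mathcal C\cap\mathcal W,\mathcal F)=(\mathcal{PGF}_m\cap\mathcal F_n,\mathcal P_m^\perp\cap\mathcal{GF}_n)=(\mathcal P_m,\mathcal P_m^\perp\cap\mathcal{GF}_n)$ is the cotorsion pair from (1), while $(\mathcal C,\mathcal F\cap\mathcal W)=(\mathcal{PGF}_m,\mathcal P_m^\perp\cap\mathcal F_n)$ is the cotorsion pair from (3); both are complete and hereditary, so the triple is a hereditary Hovey triple in $\mathcal{GF}_n$. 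For (6), the analogous decomposition uses (2) and (4). By Theorem \ref{hoherhov}, the homotopy category of (5) is $(\mathcal{PGF}_m\cap\mathcal P_m^\perp\cap\mathcal{GF}_n)/(\mathcal P_m\cap\mathcal P_m^\perp)=(\mathcal{PGF}_m\cap\mathcal P_m^\perp)/(\mathcal P_m\cap\mathcal P_m^\perp)$, which is $\cong\mathcal{PGF}/\mathcal P$ by Corollary \ref{mainthm2}(2); and the homotopy category of (6) equals that of (5) by Corollary \ref{wdeterminetriho}, since both Hovey triples have the same class of trivial objects $\mathcal F_n$ in $\mathcal{GF}_n$.

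The main obstacle I anticipate is the pair of identities $\mathcal{PGF}_m\cap\mathcal F_n=\mathcal P_m$ and $\mathcal{GF}_m\cap\mathcal F_n=\mathcal F_m$, which drive both the kernel calculations and the reduction of $\mathcal C\cap\mathcal W$ in the Hovey triples. Everything else is a direct bookkeeping application of Theorem \ref{inducedctp1}, Theorem \ref{keylemgf}, Theorem \ref{hoherhov}, and Corollary \ref{wdeterminetriho}.
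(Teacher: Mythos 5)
Your proposal is correct and follows essentially the same route as the paper: apply Theorem \ref{inducedctp1} to $\mathcal B=\mathcal{GF}_n$ for each of the four ambient cotorsion pairs, simplify the right-hand classes via the identity $\mathcal{PGF}^\perp\cap\mathcal{GF}_n=\mathcal F_n$ from Theorem \ref{keylemgf}, verify thickness of $\mathcal F_n$ in $\mathcal{GF}_n$ via Lemma \ref{fnthickingfn}, assemble the two Hovey triples, and identify the homotopy category via Theorem \ref{hoherhov} and Corollary \ref{wdeterminetriho}.

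The one place you deviate is in establishing the two identities $\mathcal{PGF}_m\cap\mathcal F_n=\mathcal P_m$ and $\mathcal{GF}_m\cap\mathcal F_n=\mathcal F_m$ that drive the kernel computations and the reduction of $\mathcal C\cap\mathcal W$. You re-derive them from scratch: take an Auslander--Buchweitz type resolution $0\to K\to G\to M\to 0$, observe that $G$ is a (P)GF module of finite flat dimension, and invoke Lemma \ref{PGF+ffd =f} to force $G$ projective or flat. This is correct, but the paper reads both identities off immediately from what is already proved: $\mathcal{PGF}_m\cap\mathcal F_n=\mathcal{PGF}_m\cap(\mathcal{PGF}^\perp\cap\mathcal{GF}_n)=\mathcal{PGF}_m\cap\mathcal{PGF}^\perp=\mathcal P_m$ by Theorem \ref{mainthm1}, and likewise $\mathcal{GF}_m\cap\mathcal F_n=\mathcal{GF}_m\cap\mathcal{PGF}^\perp=\mathcal F_m$ by Theorem \ref{keylemgf}. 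Your resolution-based argument is a little longer, and for part (4) you should really cite Theorem \ref{keylemgf}(2) (which gives $G\in\mathcal{PGF}$ over an arbitrary ring) rather than appeal to an ``Auslander--Buchweitz resolution for Gorenstein flat modules,'' since the classical GF version in Holm is stated under a coherence hypothesis; using the PGF-coresolution plus Lemma \ref{PGF+ffd =f}(2) makes the argument uniform. Your identification of the homotopy category for (5) via Corollary \ref{mainthm2}(2) is also fine, and is just a different reference point from the paper's use of the $m=0$ Hovey triple $(\mathcal{PGF},\mathcal{GF}_n,\mathcal F_n)$; both settle the triangulated structure via Corollary \ref{wdeterminetriho}.
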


\begin{proof} \ Note that $\mathcal{GF}_n$ is closed under extensions (cf. Lemma \ref{Gflatdim}(ii)), and closed under direct summands (cf. \cite[Corollary 7.8(2)]{BMGO}).
Thus $\mathcal{GF}_n$ is a weakly idempotent complete exact category.
Also, $\mathcal{GF}_n$ is closed under the kernels of epimorphisms (cf. Lemma \ref{Gflatdim}(i)): this is a requirement for the applications of Theorem \ref{inducedctp1}.

\vskip 5pt

(1) \ Applying Theorem \ref{inducedctp1} to $\mathcal B = \mathcal{GF}_n$ and $(\mathcal X, \ \mathcal Y) = (\mathcal P_m, \ \mathcal P_m^\perp)$,
one gets (1).

\vskip 5pt

(2) \ Applying Theorem \ref{inducedctp1} to $\mathcal{GF}_n$ and  $(\mathcal X, \ \mathcal Y) =(\mathcal {F}_m, \ \mathcal F_m^\perp)$, one gets (2).

\vskip 5pt

(3) \ Applying Theorem \ref{inducedctp1} to $\mathcal{GF}_n$ and  $(\mathcal X, \ \mathcal Y) =(\mathcal {PGF}_m, \ \mathcal P_m^\perp\cap \mathcal {PGF}^\perp)$, one gets
the complete and hereditary cotorsion pair $(\mathcal {PGF}_m, \ \mathcal P_m^\perp\cap \mathcal {PGF}^\perp\cap \mathcal{GF}_n)$ in $\mathcal{GF}_n$.
Since $\mathcal{PGF}^\perp \cap\mathcal{GF}_n = \mathcal F_n$ (cf. Theorem \ref{keylemgf}), this cotorsion pair is just $(\mathcal{PGF}_m, \ \mathcal{F}_n\cap \mathcal P_m^\perp)$.

\vskip 5pt

Since $\mathcal{PGF}_m\cap\mathcal{F}_n = \mathcal{PGF}_m\cap \mathcal{PGF}^\perp \cap\mathcal{GF}_n = \mathcal{PGF}_m\cap \mathcal{PGF}^\perp  = \mathcal P_m$
(cf. Theorem \ref{mainthm1}), the kernel of this cotorsion pair is $\mathcal{P}_m\cap\mathcal P_m^\perp$.

\vskip 5pt

(4) \ Applying Theorem \ref{inducedctp1} to $\mathcal{GF}_n$ and $(\mathcal X, \ \mathcal Y) = (\mathcal {GF}_m, \ \mathcal F_m^\perp\cap\mathcal {PGF}^{\perp})$,
one gets the complete and hereditary cotorsion pair $(\mathcal {GF}_m, \ \mathcal F_m^\perp\cap \mathcal {PGF}^{\perp}\cap \mathcal{GF}_n)$ in $\mathcal{GF}_n$. By
$\mathcal{PGF}^\perp \cap\mathcal{GF}_n = \mathcal F_n$ (cf. Theorem \ref{keylemgf}), this cotorsion pair is just
$(\mathcal {GF}_m, \ \mathcal F_n\cap \mathcal{F}^\perp_m)$.

\vskip 5pt

Since $\mathcal {GF}_m\cap \mathcal{F}_n = \mathcal {GF}_m\cap \mathcal {PGF}^{\perp}\cap \mathcal{GF}_n = \mathcal {GF}_m\cap \mathcal {PGF}^{\perp} = \mathcal{F}_m$,
the kernel of this cotorsion pair is
$\mathcal{F}_m\cap\mathcal F_m^\perp$.

\vskip 5pt

(5) \ By Lemma \ref{fnthickingfn}, $\mathcal {F}_{n}$ is thick in $\mathcal{GF}_n$.

\vskip 5pt

Since $\mathcal {PGF}_m\cap \mathcal{F}_n = \mathcal P_m$, it follows from (1) and (3)
that $(\mathcal{PGF}_m, \ \mathcal P_m^\perp\cap\mathcal{GF}_n, \ \mathcal {F}_{n})$ is a hereditary Hovey triple in $\mathcal{GF}_n$.
Thus, by Theorem \ref{hoherhov}, $\mathcal{PGF}_m\cap\mathcal P_m^\perp\cap\mathcal{GF}_n = \mathcal{PGF}_m\cap\mathcal P_m^\perp$ is a Frobenius category,
$\mathcal{PGF}_m\cap\mathcal P_m^\perp\cap\mathcal{GF}_n\cap\mathcal {F}_{n} = \mathcal {P}_{m}\cap\mathcal P_m^\perp$ is the class of its projective-injective objects,
and   the homotopy category of the corresponding exact model structure on $\mathcal{GF}_n$ is \ $(\mathcal{PGF}_m\cap \mathcal P_m^\perp) /(\mathcal{P}_m\cap \mathcal P_m^\perp).$  It is triangle equivalent to $\mathcal{PGF}/\mathcal P,$ by using Corollary \ref{wdeterminetriho} and
hereditary Hovey triple $(\mathcal{PGF}, \ \mathcal{GF}_n, \ \mathcal {F}_n)$  in $\mathcal{GF}_n$.

\vskip 5pt

(6) \ By Lemma \ref{fnthickingfn}, $\mathcal {F}_{n}$ is thick in $\mathcal{GF}_n$.

\vskip 5pt

Since $\mathcal {GF}_m\cap \mathcal{F}_n = \mathcal {F}_m$, it follows from (2) and (4)
that \ $(\mathcal{GF}_m, \ \mathcal F_m^\perp\cap\mathcal{GF}_n, \ \mathcal {F}_{n})$ is a hereditary Hovey triple in $\mathcal{GF}_n$.
Thus, by Theorem \ref{hoherhov}, $\mathcal{GF}_m\cap\mathcal F_m^\perp$ is a Frobenius category,
$\mathcal {F}_{m}\cap\mathcal F_m^\perp$ is the class of its projective-injective objects,
and   the homotopy category of the corresponding exact model structure on $\mathcal{GF}_n$ is \ $(\mathcal{GF}_m\cap \mathcal F_m^\perp) /(\mathcal{F}_m\cap \mathcal F_m^\perp).$  By (5), $(\mathcal{PGF}, \ \mathcal{GF}_n, \ \mathcal {F}_n)$ is also a hereditary Hovey triple in $\mathcal{GF}_n$,
with homotopy category  $\mathcal{PGF}/\mathcal P.$ By Corollary \ref{wdeterminetriho}, there is a triangle equivalence
$(\mathcal{GF}_m\cap \mathcal F_m^\perp) /(\mathcal{F}_m\cap \mathcal F_m^\perp)\cong \mathcal{PGF}/\mathcal P$.
\end{proof}

When $n = m =0$, Corollary \ref{mainthm8}(3) and (5) has been obtained by G. Dalezios and I. Emmanouil \cite[Theorem 4.3]{DE}.

\vskip5pt

\subsection{Application 5: Exact model structures on $\mathcal{GF}^{<\infty}$}
\begin{fact} \ Let $R$ be a ring and $m$ a non-negative integer. Then

\vskip5pt

{\rm (1)} \ $\mathcal{PGF}^{\perp}\cap \mathcal {GF}^{<\infty}  = \mathcal F^{<\infty}.$

\vskip5pt

{\rm (2)} \ $\mathcal{PGF}_m\cap \mathcal {F}^{<\infty}  = \mathcal P_m.$

\vskip5pt

{\rm (3)} \ \ $\mathcal{GF}_m\cap \mathcal {F}^{<\infty}  = \mathcal F_m.$

\end{fact}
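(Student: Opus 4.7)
My plan is to deduce all three identities directly from the two ``orthogonality calculations'' already established: \ $\mathcal{PGF}^{\perp}\cap \mathcal{PGF}_m = \mathcal{P}_m$ \ (Theorem \ref{mainthm1}) \ and \ $\mathcal{PGF}^{\perp}\cap \mathcal{GF}_t = \mathcal{F}_t$ \ (Theorem \ref{keylemgf}), exactly in the spirit of the analogous displays preceding Corollary \ref{mainthm5.5} and in Corollary \ref{mainthm7}. The only real input needed beyond those is the trivial remark that $\mathcal{GF}^{<\infty} = \bigcup_{t\ge 0}\mathcal{GF}_t$ and $\mathcal{F}^{<\infty} = \bigcup_{t\ge 0}\mathcal{F}_t$, so intersections distribute over the union.

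For (1), by Theorem \ref{keylemgf} one has
\[
\mathcal{PGF}^{\perp}\cap \mathcal{GF}^{<\infty}
=\bigcup_{t\ge 0}\bigl(\mathcal{PGF}^{\perp}\cap \mathcal{GF}_t\bigr)
=\bigcup_{t\ge 0}\mathcal{F}_t
=\mathcal{F}^{<\infty}.
\]
A crucial byproduct of the equality $\mathcal{PGF}^{\perp}\cap \mathcal{GF}_t = \mathcal{F}_t$ is that $\mathcal{F}_t\subseteq \mathcal{PGF}^{\perp}$ for every $t$, and hence $\mathcal{F}^{<\infty}\subseteq \mathcal{PGF}^{\perp}$. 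This is what powers (2) and (3).

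For (2), the inclusion $\mathcal{P}_m\subseteq \mathcal{PGF}_m\cap \mathcal{F}^{<\infty}$ is immediate from $\mathcal P\subseteq \mathcal{PGF}$ and $\mathcal{P}_m\subseteq \mathcal{F}_m$. For the reverse inclusion, using $\mathcal{F}^{<\infty}\subseteq \mathcal{PGF}^{\perp}$ and Theorem \ref{mainthm1},
\[
\mathcal{PGF}_m\cap \mathcal{F}^{<\infty}\subseteq \mathcal{PGF}_m\cap \mathcal{PGF}^{\perp}=\mathcal{P}_m.
\]
For (3), the inclusion $\mathcal{F}_m\subseteq \mathcal{GF}_m\cap \mathcal{F}^{<\infty}$ is clear; conversely, using $\mathcal{F}^{<\infty}\subseteq \mathcal{PGF}^{\perp}$ and Theorem \ref{keylemgf},
\[
\mathcal{GF}_m\cap \mathcal{F}^{<\infty}\subseteq \mathcal{GF}_m\cap \mathcal{PGF}^{\perp}=\mathcal{F}_m.
\]

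There is essentially no obstacle here: the whole argument is a bookkeeping exercise on the two identities of \v{S}aroch--\v{S}\'{t}ov\'{i}\v{c}ek type. The only point worth emphasizing is the small asymmetry that, even though the natural analogues at the $\mathcal{GP}$-level require the Artin algebra hypothesis, facts (1)--(3) are unconditional because they only invoke the $\mathcal{PGF}$-side identities, which hold for any ring $R$.
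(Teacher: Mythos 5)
Your proof is correct and relies on the same two orthogonality identities (Theorem \ref{mainthm1} and Theorem \ref{keylemgf}) as the paper's own argument. The only difference is stylistic: the paper unwinds $\mathcal{F}^{<\infty}$ as a union $\bigcup_{t}\mathcal{F}_t$ in each of (2) and (3) as well, whereas you extract the single observation $\mathcal{F}^{<\infty}\subseteq \mathcal{PGF}^{\perp}$ from (1) and then apply the finite-level equalities $\mathcal{PGF}_m\cap\mathcal{PGF}^{\perp}=\mathcal{P}_m$ and $\mathcal{GF}_m\cap\mathcal{PGF}^{\perp}=\mathcal{F}_m$ directly, which is a slight simplification of the same computation.
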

\begin{proof} \ One has $\mathcal{PGF}^\perp \cap\mathcal{PGF}_t = \mathcal P_t$ (cf. Theorem \ref{mainthm1}) and $\mathcal{PGF}^\perp \cap\mathcal{GF}_t = \mathcal F_t$ (cf. Theorem \ref{keylemgf}), for any non-negative integer $t$.

\vskip5pt
{\rm (1)} \ One has $\mathcal{PGF}^{\perp}\cap \mathcal {GF}^{<\infty}  =  \bigcup\limits_{t\ge 0}(\mathcal{PGF}^{\perp}\cap \mathcal {GF}_t) =  \bigcup\limits_{t\ge 0}\mathcal {F}_t
= \mathcal F^{<\infty}.$

\vskip5pt

{\rm (2)} \ One has $\mathcal{PGF}_m\cap \mathcal {F}^{<\infty} =  \bigcup\limits_{t\ge 0}(\mathcal{PGF}_m\cap \mathcal {F}_t)
 =  \bigcup\limits_{t\ge 0}(\mathcal{PGF}_m\cap \mathcal{PGF}^\perp \cap\mathcal{GF}_t) = \bigcup\limits_{t\ge 0}(\mathcal{P}_m\cap\mathcal{GF}_t) = \mathcal{P}_m\cap\mathcal {GF}^{<\infty}  = \mathcal P_m.$

\vskip5pt

{\rm (3)} \ One has $\mathcal{GF}_m\cap \mathcal {F}^{<\infty} =  \bigcup\limits_{t\ge 0}(\mathcal{GF}_m\cap \mathcal {F}_t)
 =  \bigcup\limits_{t\ge 0}(\mathcal{GF}_m\cap \mathcal{PGF}^\perp \cap\mathcal{GF}_t) = \bigcup\limits_{t\ge 0}(\mathcal{F}_m\cap\mathcal{GF}_t) = \mathcal{F}_m\cap\mathcal {GF}^{<\infty}  = \mathcal F_m.$
\end{proof}

By the similar argument as in Corollary \ref{mainthm8} one has

\begin{cor} \label{mainthm9} \ Let $R$ be a ring and $m$ a non-negative integers. Then

\vskip5pt

{\rm (1)} \ The pair \ $(\mathcal{P}_m, \ \mathcal P_m^\perp\cap \mathcal{GF}^{<\infty})$ is a complete and hereditary cotorsion pair in weakly idempotent complete exact category $\mathcal{GF}^{<\infty}$, with
kernel $\mathcal{P}_m\cap\mathcal{P}_m^{\perp}$.

\vskip5pt

{\rm (2)} \ The pair \ $(\mathcal{F}_m, \ \mathcal F_m^\perp\cap \mathcal{GF}^{<\infty})$ is a complete and hereditary cotorsion pair in $\mathcal{GF}^{<\infty}$, with
kernel $\mathcal{F}_m\cap\mathcal{F}_m^{\perp}$.

\vskip5pt

{\rm (3)} \ The pair \ $(\mathcal{PGF}_m, \ \mathcal{F}^{<\infty}\cap \mathcal P_m^\perp)$ is a complete and hereditary cotorsion pair in  $\mathcal{GF}^{<\infty}$, with
kernel $\mathcal{P}_m\cap\mathcal{P}_m^{\perp}$.

\vskip5pt

{\rm (4)} \ The pair \ $(\mathcal{GF}_m, \ \mathcal{F}^{<\infty}\cap\mathcal F_m^\perp)$ is a complete and hereditary cotorsion pair in $\mathcal{GF}^{<\infty}$, with
kernel $\mathcal{F}_m\cap\mathcal{F}_m^{\perp}$.

\vskip 5pt

{\rm (5)}  \ The triple \ $(\mathcal{PGF}_m, \ \mathcal P_m^\perp\cap\mathcal{GF}^{<\infty}, \ \mathcal {F}^{<\infty})$ is a hereditary Hovey triple in $\mathcal{GF}^{<\infty}$.
The corresponding homotopy category is \ $(\mathcal{PGF}_m\cap \mathcal P_m^\perp) /(\mathcal{P}_m\cap \mathcal P_m^\perp)\cong \mathcal{PGF}/\mathcal P.$

\vskip 5pt

{\rm (6)}  \ The triple \ $(\mathcal{GF}_m, \ \mathcal F_m^\perp\cap\mathcal{GF}^{<\infty}, \ \mathcal {F}^{<\infty})$ is a hereditary Hovey triple in $\mathcal{GF}^{<\infty}$.
The corresponding homotopy category is \ $(\mathcal{GF}_m\cap \mathcal F_m^\perp) /(\mathcal{F}_m\cap \mathcal F_m^\perp)\cong \mathcal{PGF}/\mathcal P.$
\end{cor}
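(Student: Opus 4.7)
The plan is to parallel the proof of Corollary \ref{mainthm8} essentially verbatim, with $\mathcal{GF}^{<\infty}$ replacing $\mathcal{GF}_n$ throughout and the Fact above playing the role of the identities $\mathcal{PGF}^{\perp}\cap\mathcal{GF}_n = \mathcal F_n$ and $\mathcal{GF}_m\cap\mathcal F_n = \mathcal F_m$. First I would observe that $\mathcal{GF}^{<\infty}$ is closed under extensions and kernels of epimorphisms (by parts (ii) and (i) of Lemma \ref{Gflatdim}) and under direct summands (cf. \cite[Corollary 7.8(2)]{BMGO}), and hence is a weakly idempotent complete exact category. Then (1)--(4) follow by four applications of Theorem \ref{inducedctp1} to the complete hereditary cotorsion pairs $(\mathcal P_m,\mathcal P_m^\perp)$, $(\mathcal F_m,\mathcal F_m^\perp)$, $(\mathcal{PGF}_m,\mathcal P_m^\perp\cap\mathcal{PGF}^\perp)$ and $(\mathcal{GF}_m,\mathcal F_m^\perp\cap\mathcal{PGF}^\perp)$ in $R\mbox{-}{\rm Mod}$, whose left classes all lie inside $\mathcal{GF}^{<\infty}$. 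In the last two cases Fact~(1) absorbs $\mathcal{PGF}^\perp\cap\mathcal{GF}^{<\infty}$ into $\mathcal F^{<\infty}$, while Fact~(2) and Fact~(3) identify the respective kernels as $\mathcal P_m\cap\mathcal P_m^\perp$ and $\mathcal F_m\cap\mathcal F_m^\perp$.

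For (5) and (6) I would first verify that $\mathcal F^{<\infty}$ is thick in $\mathcal{GF}^{<\infty}$: closure under direct summands is clear, and the two-out-of-three property follows from Fact~(1) together with the thickness of $\mathcal{PGF}^\perp$ in $R$-Mod (Lemma \ref{ctpPGF}). The hereditary Hovey triple in (5) then assembles from (1) and (3): the pair $(\mathcal C\cap\mathcal W,\mathcal F) = (\mathcal{PGF}_m\cap\mathcal F^{<\infty},\ \mathcal P_m^\perp\cap\mathcal{GF}^{<\infty})$ reduces by Fact~(2) to $(\mathcal P_m,\ \mathcal P_m^\perp\cap\mathcal{GF}^{<\infty})$, which is precisely (1); and $(\mathcal C,\mathcal F\cap\mathcal W) = (\mathcal{PGF}_m,\ \mathcal P_m^\perp\cap\mathcal F^{<\infty})$ is precisely (3). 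Analogously (6) assembles from (2) and (4), with Fact~(3) collapsing $\mathcal{GF}_m\cap\mathcal F^{<\infty}$ to $\mathcal F_m$.

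For the identification of the homotopy categories I would introduce the auxiliary hereditary Hovey triple $(\mathcal{PGF},\ \mathcal{GF}^{<\infty},\ \mathcal F^{<\infty})$ in $\mathcal{GF}^{<\infty}$. Its pair $(\mathcal{PGF},\mathcal F^{<\infty})$ is the restriction of $(\mathcal{PGF},\mathcal{PGF}^\perp)$ via Theorem \ref{inducedctp1} and Fact~(1), while $(\mathcal P,\mathcal{GF}^{<\infty})$ is trivially complete and hereditary in $\mathcal{GF}^{<\infty}$ (the intersection $\mathcal{PGF}\cap\mathcal F^{<\infty} = \mathcal P$ coming from Lemma \ref{PGF+ffd =f}(2)). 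By Theorem \ref{hoherhov} the homotopy category of this auxiliary triple is $\mathcal{PGF}/\mathcal P$. Since the triples in (5), (6) and the auxiliary one all share the same class $\mathcal W = \mathcal F^{<\infty}$, Corollary \ref{wdeterminetriho} delivers the triangle equivalences claimed in (5) and (6). The only real difficulty is bookkeeping, namely tracking which of $\mathcal{PGF}^\perp$ or $\mathcal F^{<\infty}$ gets absorbed into which intersection at each step; no argument beyond those already carried out in Corollary \ref{mainthm5.5} and Corollary \ref{mainthm8} is required.
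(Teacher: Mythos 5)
Your proposal is correct and follows essentially the same route the paper intends: the paper states that Corollary~\ref{mainthm9} is established ``by the similar argument as in Corollary~\ref{mainthm8}'', and you carry out exactly that adaptation, with the three items of the preceding Fact substituting for the identities $\mathcal{PGF}^{\perp}\cap\mathcal{GF}_n=\mathcal F_n$ and $\mathcal{GF}_m\cap\mathcal F_n=\mathcal F_m$ used there. Your verification of thickness of $\mathcal F^{<\infty}$ in $\mathcal{GF}^{<\infty}$ via Fact~(1) and the thickness of $\mathcal{PGF}^\perp$ in $R$-Mod, and your choice of auxiliary Hovey triple $(\mathcal{PGF},\ \mathcal{GF}^{<\infty},\ \mathcal F^{<\infty})$ for identifying the homotopy category, are precisely the intended analogues of Lemma~\ref{fnthickingfn} and of the triple $(\mathcal{PGF},\ \mathcal{GF}_n,\ \mathcal F_n)$ used in Corollary~\ref{mainthm8}.
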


\vskip20pt

{\bf Acknowledgement}: We thank Ioannis Emmanouil for his useful comments.


\vskip10pt

Nan Gao, \ \ \ Department of Mathematics, \ \ Shanghai University,  \ Shanghai 200444, \ China

\vskip5pt

Xue-Song Lu, \ \ \ \ Pu Zhang, \ \ School of Mathematical Sciences, \ \ Shanghai Jiao Tong University,  \ Shanghai 200240, \ China


\begin{thebibliography}{99}
\bibitem{ABr} M. Auslander, M. Bridger, Stable module theory, Mem. Amer. Math. Soc. 94., Amer. Math. Soc., Providence,
R.I., 1969.
\bibitem{AB} M. Auslander, R.O. Buchweitz, The homological theory of maximal Cohen-Macaulay approximations, M\'{e}m Soc. Math. France 38(1989), 5-37.

\bibitem{Bass} H. Bass, Finitistsic dimension and a homological gneralization of semiprimary rings, Trans. Amer. Math. Soc. 95(1960), 466-488.

\bibitem{Bec} H. Becker, Models for singularity categories, Adv. Math. 254(2014), 187-232.
\bibitem{BR}  A. Beligiannis, I. Reiten,  Homological and homotopical aspects of torsion theories,  Mem. Amer. Math. Soc. 188, 883(2007).

\bibitem{BMGO} D. Bennis, R. El Maaouy, J.R. Garc\'ia Rozas, L. Oyonarte, Relative Gorenstein flat modules and dimension, Comm. Algebra 50(2022), 3853-3882.
\bibitem{Bu} T. B\"uhler, Exact categories, Expositions Math. 28(2010), 1-69.
\bibitem{DE} G. Dalezios, I. Emmanouil, Homological dimension based on a class of Gorenstein flat modules, arXiv 2208.05692v2(math.RA).

\bibitem{E} I. Emmanouil, On the finiteness of Gorenstein homological dimensions, J. Algebra 372(2012), 376-396.
\bibitem{E2} I. Emmanouil, Modules of finite Gorenstein flat dimension and approxiamtions, Math. Nachr. 297(4)(2024), 1187-1207.
\bibitem{EJ1} E. E. Enochs, O. M. G. Jenda, Gorenstein injective and projective modules, Math. Z. 220(4)(1995), 611-633.

\bibitem{EJ} E. E. Enochs, O. M. G. Jenda, Relative Homological Algebra, Walter de Gruyter, Berlin, 2000.
\bibitem{ET} P. C. Eklof, J. Trlifaj, How to make Ext vanish, Bull. London Math. Soc. 33(1)(2001), 41-51.
\bibitem{GR} J. R. Garc\'ia-Rozas, Covers and envelopes in the category of complexes of modules, Research Notes in Math 407, Chapman \& Hall/CRC, Boca Raton, FL, 1999.
\bibitem{G} J. Gillespie, Model structures on exact categories, J. Pure. Appl. Alg. 215(2011), 2892-2902.
\bibitem{G2} J. Gillespie, How to construct a Hovey triple from two cotorsion pairs, Fund. Math. 230(3)(2015), 281-289.
\bibitem{G3} J. Gillespie, Abelian model category theory, Cambridge Studies in Adv. Math. 215, Cambridge Univ. Press, 2025.
\bibitem{GT} R. Gobel, J. Trlifaj, Approximations and Endomorphism Algebras of Modules, de Gruyter Expositions in Mathematics, 41, 2nd revised and extended edition. Berlin, Boston, 2012.
\bibitem{Ho} H. Holm, Gorenstein homological dimensions, J. Pure Appl. Algebra 189(2004), 167-193.
\bibitem{Hov1} M. Hovey, Model categories,  Math. Surveys and Monographs 63, Amer. Math. Soc., Providence, 1999.
\bibitem{Hov} M. Hovey, Cotorsion pairs, model structures, and representation theory, Math. Z. 241(2002), 553-592.
\bibitem{I} A. Iacob, Projectively coresolved Gorenstein flat and Ding projective modules,
Comm. Algebra 48(2020), 2883-2893.
\bibitem{J} C. U. Jensen, On the vanishing of $\varprojlim^{(i)}$, J. Algebra 15(1970), 151-166.
\bibitem{K} B. Keller, Chain complexes and stable categories, Manuscripta Math. 67(4)(1990), 379-417.
\bibitem{M} R. El Maaouy, Model structures, $n$-Gorenstein flat modules and PGF dimensions, arXiv 2302.12905v2 (math.RA).
\bibitem{MD} L. Mao, N. Ding, Envelopes and covers by modules of finite FP-injective and flat dimensions, Comm. Alg. 35(2007), 833-849.
\bibitem{NP} H. Nakaoka, Y. Palu, Extriangulated categories, Hovey twin cotorsion pairs and model structures, Cahiers
de Topologie et Geometrie Differentielle Categoriques, Volume LX-2 (2019), 117-193.
\bibitem{Q1}  D. Quillen, Homotopical algebra, Lecture Notes in Math. 43, Springer-Verlag, 1967.
\bibitem{Q2} D. Quillen, Rational Homotopy Theory, Ann. Math. 90(2)(1969), 205-295.
\bibitem{Q3} D. Quillen, Higher algebraic $K$-theory I, In: Lecture Notes in Math. 341, 85-147,  Springer-Verlag,
1973.
\bibitem{SS} M. Saor\'in, J. \v{S}\'{t}ov\'{i}\v{c}ek, On exact categories and applications to triangulated adjoints and model structures, Adv. Math. 228(2)(2011), 968-1007.
\bibitem{SS2} J. \v{S}aroch, J. \v{S}\'{t}ov\'{i}\v{c}ek, Singular compactness and definability for $\Sigma$-cotorsion and Gorenstein modules, Selecta Math. 26(2020), 23-40.
\bibitem{S} J. \v{S}\'{t}ov\'{i}\v{c}ek, Exact model categories, approximation theory, and cohomology of quasi-coherent sheaves, in: Advances in Representation Theory of Algebras, EMS Series of Congress Reports, European Math. Soc. Publishing House, 2014, pp. 297-367.



\end{thebibliography}
\end{document}